%% file: Zec-Expansions-2nd-Order-6-arXiv.tex
\documentclass[12pt]{article}
\usepackage{amsmath,amssymb,amsthm}%,mathtools}
\usepackage{tikz-cd,multirow,ifthen}
\usepackage{stmaryrd}

\usepackage{fouriernc,graphicx}
 \usepackage[a4paper, textwidth=13.5cm, marginpar=0pt]{geometry}

\newtheorem{theorem}{Theorem}[section]
\newtheorem{lemma}[theorem]{Lemma}
\newtheorem{prop}[theorem]{Proposition}
\newtheorem{cor}[theorem]{Corollary}

\newtheorem{deF}[theorem]{Definition}

\newtheorem{alg}[theorem]{Algorithm}

\newtheorem{example}[theorem]{Example}

\newtheorem{notation}[theorem]{Notation}
 
\newtheorem{prty}[theorem]{Properties}

\input{basic_layout_local}

\input{MathMacro}

\input{algebra}

\input{zec_local-2nd}

\newcommand{\ProofInd}{OFF}

\begin{document}

 %\ProofOn

 \subsection*{Title of manuscript}

Generalized \zec\ expansions 
of the 2nd order
 
 \subsection*{Author }
  Sungkon Chang

 \subsection*{Abstract}

We generalize \zec's theorem to second-order linear recurrences of the form $G_{k+2} = gG_{k+1} + hG_{k}$ with arbitrary, coprime initial values $(G_1, G_2)$. 
We analyze the asymptotic behavior of the count $\#R_G(X)$ of positive integers less than or equal to $X$ that have an expansion under the standard rule of expansions $\mathcal{E}$ associated with the recurrence.  Additionally, we provide sufficient conditions under which $G$ has the unique expansion property under the \roe. Finally, we completely characterize the $g$-golden ratio recurrence family when $G_1 = 1$ or $G_2 = 1$. Our approach directly investigates the algebraic structure of the expansions, simplifying and completing previous partial results.

\newpage
 
 \section{Introduction}\label{sec:introduction}
 We denote a sequence $\seq G$ of numbers simply by $G$ and let $\nat$ denote the set of positive integers. Let $F$ be the shifted \fib\ sequence given by $(F_1,F_2)=(1,2)$ and $F_{k+2} = F_{k+1}+F_k$ for all $\kN$. \zec's Theorem \cite{zec} states that each positive integer can be uniquely written as a sum of distinct, non-adjacent terms of $F$. For example, $100=F_{10} +F_5+ F_3 = 89+ 8 + 3$, which is the unique expansion of $100$ using distinct, non-adjacent terms of $F$. In this paper, we consider a generalization of \zec's Theorem for second-order linear recurrences with arbitrary initial values. We present an  example in Section \ref{sec:ZT-gen} and state our main results (Theorems \ref{thm:main-1}, \ref{thm:main-2}, and \ref{thm:g-golden}) in Section \ref{sec:results}.
 
\subsection{Generalizations}
\label{sec:ZT-gen}

Let $\nat_0$ denote the set of non-negative integers.
In this paper, a positive integer $n$ is said to {\it be \expanD\ by a sequence  $ G$ of positive integers}
if 
\begin{equation} \label{eq:expansion}
n=\sum_{k=1}^\infty a_k G_k 
\end{equation} 
where $a_k\in\nat_0$ for $k\in\nat$, and 
the expression of the sum is called an {\it expansion of $n$ by $ G$}.
If certain conditions are imposed on the coefficients $a_k$, 
the set of conditions is called a {\it rule of expansions}.
For \zec's Theorem, the rule of expansions consists of the conditions that 
for all $k\in\nat$,
\begin{equation} 
a_k\in\set{0,1}\quad \&\quad a_k=1\implies a_{k+1}=0.
\label{eq:zec-rule}
\end{equation}
We may call it {\it the \zec\ rule of expansions}.

\zec's Theorem consists of three components: the sequence, the rule of expansions, and the set of numbers \expanD\ by the sequence under the \roe. 
By varying each component,
we encounter several interesting questions;
see \cite{chang-2021,chang-2023,chang-2024,daykin,hamlin,mw}. 
Below, we introduce an  example that represents the work presented in this paper.

Let $G$ be a sequence of positive integers such that 
$G_{k+2}= G_{k+1}+ G_{k}$ for all $k\in\nat$ and 
$(G_1,G_2)=(1,3)$.
Then, we may ask   how many positive integers less than or equal to $X$ are expanded by 
$G$ under the \zec\ rule?
Notice that $(G_1,G_2,\dots)=(1,3,4,7,\dots)$, and 
\GGG{
 1=G_1,\ 3=G_2,\ 4=G_3,\ 5=G_1+G_3,\ 7=G_4,\ 8=G_1+G_4,\ 10=G_2 + G_4.
 }
 Indeed,  only seven positive integers less than or equal to $10$  are expanded by $G$
 under the \zec\ rule.
The table below lists several examples of the counts of such expansions for larger values of $X$: 
\begin{equation*} 
\begin{array}{| c | c | c | c |}
\hline
\VS{1.2em}
X & 10^2 & 10^3 & 10^4\\
\hline
\# & 72 & 724 & 7236 \\
\hline
\end{array}\  .
\end{equation*}
   
  By Theorem \ref{thm:main-1} below, the proportion of positive integers less than or equal to $X$ that have expansions by $G$ under the \zec\ rule approaches 
\begin{equation} 
 \frac{2\phi+1}{3\phi+1}\approx 0.7236 \label{eq:ratio-example}
\end{equation}
as $X\to\infty$ where $\phi:=(1+\sqrt 5)/2$ is the golden ratio.
In other words, the probability of  a randomly chosen positive integer having an expansion under the \zec\ rule is approximately $72\%$.
Interestingly, this limiting ratio still involves the golden ratio, a consequence of the \zec\ rule of expansions  \cite{chang-2023}.

 In this paper, we generalize the problem from this example to second-order linear recurrences
 \begin{equation}
G_{k+2}= g G_{k+1} + h G_{k} \label{eq:recurr}
\end{equation}
  with 
arbitrary initial values where $g$ and $h$ are positive integers; see Theorems \ref{thm:main-1},   \ref{thm:main-2}, and Theorem \ref{thm:g-golden}.
 While this generalized version of the problem was only partially answered in \cite{chang-2018}, we demonstrate here how to obtain a complete answer for second-order linear recurrences.

A similar problem for the third-order \fib\ recurrence, $G_{n+3} = G_{n+2} + G_{n+1} + G_n$, is considered in \cite{chang-2024}; we adopt a similar approach in the present paper. However, significant differences arise from the \lq\lq asymmetrical\rq\rq\ structure of the rule of expansions associated with general second-order recurrences. In particular, in Section \ref{sec:R(X)}, we introduce an approach to these formulas that is simpler than the one in \cite{chang-2018}, and we present formulas for second-order recurrences that are more refined than those for the third-order \fib\ recurrence.

If $h=1$, the recurrence \eqref{eq:recurr} is associated with {\it the generalized golden ratio} (Example \ref{exm:g-golden}). In Theorem \ref{thm:g-golden}, we provide a complete answer for this general case when either $G_1$ or $G_2$ equals $1$. The proof for the case $G_2=1$ requires a novel analytical approach.

The remainder of the paper is organized as follows. In Section \ref{sec:generalized-expansions}, we introduce our main results; to establish the necessary context, we begin by presenting the standard \roe\ associated with the recurrence \eqref{eq:recurr}. In Section \ref{sec:lex-order}, we introduce the \lex\ order on the rule of expansions, which provides the underlying structure that clarifies the problem. In Section \ref{sec:classification}, we completely classify expansions that are distinct yet yield the same value in $\nat_0$. The algorithm for finding all such expansions is introduced at the end of Section \ref{sec:classification-sub}, and three   examples are presented in Sections \ref{sec:exm-I}, \ref{sec:exm-II}, and \ref{sec:exm-III}. In Section \ref{sec:R(X)}, we develop formulas to estimate the counts of the various expansions classified in Section \ref{sec:classification}. We prove Theorem \ref{thm:main-1} at the end of Section \ref{sec:second-term}, and demonstrate how to apply these formulas to our three running examples in Section \ref{sec:the-exm}. One of these examples concerns the recurrence associated with the generalized golden ratios; the pattern observed there is formalized as Theorem \ref{thm:g-golden} in Section \ref{sec:results}. We prove this theorem in Section \ref{sec:g-golden}. Given the linear recurrence \eqref{eq:recurr}, we can associate each pair of initial values $(G_1,G_2)$ with the limiting proportion of expandable integers, as in ratio \eqref{eq:ratio-example} (e.g., $(1,3)\mapsto 0.7236$). Finally, in Section \ref{sec:ratio}, we conclude the paper by analyzing this limiting ratio as a function of the initial values $(G_1,G_2)$. Throughout the paper, proofs that follow directly from standard definitions or straightforward arguments are omitted, with brief comments provided where appropriate.
  
\section{Generalized \zec\ expansions}\label{sec:generalized-expansions}

In this section, we introduce the rule of expansions associated with second-order recurrences.
Our main results are presented in Section \ref{sec:results}. 
\begin{deF}\label{def:cf} \rm Let $\mathfrak N^\infty$ be the subset of the countable product $\nat_0^\infty$ consisting of infinite tuples $\ep = (\ep_1, \ep_2, \dots)$ with only finitely many nonzero entries. We denote the $k$-th entry of $\ep$ by $\ep_k$. If $\ep$ is not the zero tuple $\mathbf 0$, we denote by $\ord(\ep)$ the largest index $\ell$ such that $\ep_\ell\ne 0$, and we define $\ord(\ep)=0$ if $\ep = \mathbf 0$. If $\ell:=\ord(\ep)>0$, we can represent $\ep$ as the finite tuple $(\ep_1,\dots, \ep_\ell)$ or $(\ep_1,\dots, \ep_\ell,0,\dots,0)$. Conversely, any finite tuple of non-negative integers can be identified with an infinite tuple in $\mathfrak N^\infty$ by appending zeros; for example, $(1,3,4) \mapsto (1,3,4,0,0,\dots)$. \end{deF} 

\begin{notation}\rm Given a sequence $G$ of positive integers and a coefficient tuple $\ep\in \mathfrak N^\infty$, we define $\ep \cdot G:=\sum_{k=1}^\infty \ep_k G_k$. In this context, we refer to $\ep$ as a {\it \cf} and to $G$ as a {\it base sequence}.
 \end{notation}

\subsection{The rule of expansions $\cE$}
Let $g$ and $h$ be positive integers. 
In this paper, we formally introduce only the standard rule of expansions associated with the recurrence \eqref{eq:recurr}. A more general principle of constructing a rule of expansions is introduced in \cite{chang-2021}. 

\begin{deF}\label{def:Z-collection} \rm The finite tuples of non-negative integers of the following forms are called the {\it semi-proper blocks} associated with the recurrence \eqref{eq:recurr}: \GGG{ (c)\text{ where } 0\le c\le g-1;\quad (c,g)\text{ where } 0\le c\le h-1. } For example, if $(g,h)=(3,5)$, the \spb s are \GGG{ (0),\  (1),\  (2),\  (0,3),\ (1,3),\ (2,3),\ (3,3),\  (4,3). } Although the definition of {\it proper blocks} is given in \cite{chang-2021}, we do not use this concept in the present work.

Let $\cEp$ be the subcollection of $\mathfrak N^\infty$ consisting of tuples obtained by concatenating \spb s. For example, if $(g,h)=(3,5)$, the following tuples are elements of $\cEp$: \GGG{ (0,3,2,1,1),\ (2,2,3,3,3),\ (0,3,0,2,0,1). } Let $\cEm$ be the subcollection of $\mathfrak N^\infty$ consisting of tuples of the form \GGG{ (g,\ep_2,\ep_3,\dots,\ep_\ell) } where $(\ep_2,\ep_3,\dots,\ep_\ell)\in\cEp$. We define $\cE:=\cEp \cup \cEm$, and we refer to this collection of \cf s as {\it the (standard) rule of expansions associated with the recurrence \eqref{eq:recurr}}.
 \end{deF} 
 Every finite tuple in $\cEp$ can be uniquely decomposed into a concatenation of \spb s (Corollary \ref{cor:unique-decomp}).
For the remainder of the paper, $\cE$ will denote 
the standard rule of expansions associated with the recurrence
 (\ref{eq:recurr}) unless otherwise specified.

\begin{example}\label{exm:cE-base-h} \rm Let $(g,h)=(h-1,h)$ where $h\ge 2$. In this case, the collection $\cE$ defined in Definition \ref{def:Z-collection} consists of the tuples $\ep=(\ep_1,\dots,\ep_\ell)$ satisfying $\ep_k\le h-1$ for $1\le k\le \ell$. Thus, the collection $\cE$ corresponds to the standard rule for base-$h$ expansions.
 \end{example} 
 
 \begin{example}\label{exm:cE-g-golden} \rm Let $(g,h)=(1,1)$. Here, the collection $\cE$ corresponds to the \zec\ \roe; that is, $\cE$ consists of the tuples $\ep\in\mathfrak N^\infty$ such that $\ep_k\le 1$ for all $k\in\nat$, and $\ep_k=1$ implies $\ep_{k+1}=0$. \end{example}

\subsection{Expansions under $\cE$}

 \begin{deF} \label{def:expansions-eval} \rm Let $G$ be a sequence of positive integers, and let $\cT$ be a subcollection of $\NN^\infty$. 
 If $n=\ep\cdot G$ for some $\ep\in\cT$, then $n$ is said to {\it be expanded by $G$ under $\cT$}, and the expression $\ep\cdot G$ is called a {\it $\cT$-expansion by $G$}. 
 
 Let $\eval_G : \cE \to \nat_0$ be the evaluation function defined by $\ep\mapsto \ep \cdot G$. When no ambiguity can arise, we write $\eval(\ep)$ instead of $\eval_G(\ep)$. 
  If $\eval_G$ is injective, then $G$ is said to {\it have the unique expansion property} under $\cE$. 
   If $\eval_G : \cE \to \nat_0$ is bijective, then $G$ is called a {\it \funds\ for $\cE$}. \end{deF} As an illustration, let $G$ be the sequence defined by $G_k=2k-1$, and let $\cE$ be the rule for binary expansions (Example \ref{exm:cE-base-h}). The sequence $G$ does not have  \uniexp\   because $G_1+G_2+G_3 = G_5$ where both the LHS and RHS are $\cE$-expansions. Furthermore, the relation $G_1 + G_k = 2k$ for $k > 1$ implies that every positive integer except $n=2$ can be represented as an $\cE$-expansion using $G$.
 
   \begin{deF}\rm \label{def:3-recurrence}
   Let  $\cH$ denote the collection of sequences $G $ of positive integers 
such that 
$G$ satisfies the recurrence (\ref{eq:recurr}) and the terms $G_1$ and $G_2$ are distinct and coprime.
\end{deF}

\begin{theorem}[\cite{chang-2021}]

Let $H$ be the sequence in $\cH$ such that $(H_1,H_2)=(1,g+1)$. 
If $G$ is an increasing \funds\ for $\cE$, then 
$G=H$.
In particular, $H$ is an increasing \funds\ for $\cE$.

\end{theorem}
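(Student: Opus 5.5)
The plan has two halves. First, any increasing \funds\ $G$ for $\cE$ must coincide with $H$ on its first terms and then satisfy the recurrence. Second, $H$ itself is a \funds.

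\textbf{Uniqueness.} Suppose $G$ is an increasing \funds\ for $\cE$. I will show $G=H$ by strong induction on the index.

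Since $\eval_G$ is surjective, $1=\ep\cdot G$ for some $\ep\in\cE$. Because $G$ is increasing and consists of positive integers, this forces $G_1=1$.

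The tuples $(c)$ with $0\le c\le g-1$, together with $(g)\in\cEm$, lie in $\cE$ and evaluate to $0,1,\dots,g$. Every other nonzero tuple in $\cE$ has $\ord\ge 2$, so it evaluates to at least $G_2$.
\begin{itemize}
\item If $G_2\le g$, then $G_2=(0,1)\cdot G$ equals some $c\,G_1$ with $c\le g$. Both $(0,1)$ and $(c)$ belong to $\cE$, so injectivity fails.
\item If $G_2>g+1$, then the value $g+1$ is not attained, so surjectivity fails.
\end{itemize}
Hence $G_2=g+1=H_2$.

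For the inductive step, assume $G_j=H_j$ for all $j\le k+1$ with $k\ge1$. I claim that the values of the tuples in $\cE$ of order at most $k+1$ are exactly the integers $0,1,\dots,G_{k+2}^{H}-1$, each attained once; here $G^H_{k+2}:=gH_{k+1}+hH_k$. This is a counting and interval statement that I will prove in the second half for $H$. It depends only on the first $k+1$ terms, so it holds for $G$ as well.

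The next-smallest value of $\eval_G$ is then $G_{k+2}$, attained by $(0,\dots,0,1)$ of length $k+2$.
\begin{itemize}
\item If $G_{k+2}<G^H_{k+2}$, this value collides with a value already attained, contradicting injectivity.
\item If $G_{k+2}>G^H_{k+2}$, the integer $G^H_{k+2}$ is missed, contradicting surjectivity.
\end{itemize}
So $G_{k+2}=gG_{k+1}+hG_k$. Notice that $\gcd(1,g+1)=1$ and $1\ne g+1$, so $H\in\cH$.

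\textbf{Existence (the interval claim for $H$).} I will prove by induction on $\ell$ that $\eval_H$ restricted to $\{\ep\in\cE:\ord(\ep)\le\ell\}$ is a bijection onto $[0,H_{\ell+1})$. I would decompose tuples by their top blocks, using the unique decomposition into \spb s (Corollary \ref{cor:unique-decomp}). A tuple of order at most $\ell$ ends either in a single block $(c)$ at position $\ell$ with $0\le c\le g-1$, or in a block $(c,g)$ occupying positions $\ell-1,\ell$ with $0\le c\le h-1$.
\begin{itemize}
\item The first kind, with prefix of order at most $\ell-1$, contributes $cH_\ell+[0,H_\ell)$ for $c=0,\dots,g-1$. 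Together these fill $[0,gH_\ell)$.
\item The second kind, with prefix of order at most $\ell-2$, contributes $gH_\ell+cH_{\ell-1}+[0,H_{\ell-1})$ for $c=0,\dots,h-1$. Together these fill $[gH_\ell,\,gH_\ell+hH_{\ell-1})=[gH_\ell,H_{\ell+1})$.
\end{itemize}

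\textbf{Main obstacle.} The hard part is the role of $\cEm$, that is, tuples whose first entry is $g$. The prefix set must be ``$\cE$ of smaller order,'' and $\cE$ includes $\cEm$, so the induction hypothesis has to be about all of $\cE$. I then need to check that appending blocks to a $\cEm$ tuple keeps it in $\cE$; this holds because $\cEm$ is $(g)$ followed by $\cEp$. The induction must also respect the boundary cases:
\begin{itemize}
\item For small $\ell$ the block $(c,g)$ may use position $1$. In that case its first entry $c$ with $c<h$ must be compared with the $\cEm$ tuple $(g,\dots)$.
\item The base cases $\ell=1$ and $\ell=2$ are checked directly: with $H_0$ interpreted so that $H_2=g+1$, the values are $0,\dots,g$, and then $[0,H_3)$.
\end{itemize}
Handling these overlaps without double counting is where I expect the care to be needed.
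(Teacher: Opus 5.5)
Your proof is correct. It also covers the uniqueness half for an arbitrary increasing sequence of positive integers: you never assume the recurrence, you derive it.

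The bookkeeping you flag as the main obstacle is harmless. For $\ell\ge 2$, a tuple in $\cE$ of order at most $\ell$ has $\ep_\ell\le g$, since an entry exceeding $g$ can only be the first entry of a block $(c,g)$, which would force $\ep_{\ell+1}\neq 0$. Its top block is $(c,g)$ at positions $\ell-1,\ell$ exactly when $\ep_\ell=g$, and is $(\ep_\ell)$ otherwise. The leading $g$ of a $\cEm$ tuple occupies the top position only when $\ell=1$. Since you count tuples rather than decompositions, nothing is counted twice; for instance, when $g\le h-1$ the tuple $(g,g)$ lies in $\cEp$ but not in $\cEm$. You should also note that for $g=1$ the tuples $(0,1)$ and $(0,\dots,0,1)$ lie in $\cE$, because $(0,g)$ is a semi-proper block.

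The paper does not prove this statement itself; it is quoted from \cite{chang-2021}. However, the tools of Section~\ref{sec:lex-order} give a different route to the existence half. By Corollary~\ref{thm:finiteness}, each $\ep\in\cE$ equals $\lub_\cE^n(\mathbf 0)$, where $n$ is the number of elements of $\cE$ below $\ep$. Lemma~\ref{lem:add-Gn} with $n=1$ gives $\lub_\cE(\mathbf w)\dH=H_1+\mathbf w\dH=\mathbf w\dH+1$ for $\mathbf w\in\cEp$. The computation in the proof of Theorem~\ref{thm:initial-unique} gives $\lub_\cE(\ep)\dH=\ep\dH+(H_2-gH_1)=\ep\dH+1$ for $\ep\in\cEm$. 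Hence $\lub_\cE^n(\mathbf 0)\dH=n$, and $\eval_H$ is an order isomorphism from $(\cE,<)$ onto $\nat_0$.

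That route gives lexicographic monotonicity of $\eval_H$ at no extra cost, which matches the viewpoint used throughout the paper. But it depends on the carry analysis behind Lemma~\ref{lem:add-Gn} and on the recurrence, so by itself it says nothing about an arbitrary increasing $G$. Your top-block interval induction is more elementary and self-contained, in the same spirit as the block-peeling count of $U_n$ in Proposition~\ref{prop:Un}. Your squeeze on the smallest value attained in order $k+2$ is what forces any increasing fundamental sequence to obey the recurrence, so it is what delivers the uniqueness half.
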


This theorem implies that there is a unique increasing 
\funds\ $H$ for $\cE$.
The bijectivity of $\eval_H$ is also established  in \cite{hamlin,mw}.

\begin{deF}\label{def:char-poly}
\rm
Let $\al$ and $\beta$ be the (real) zeros of the characteristic polynomial
$x^2-gx - h$ such that $\al>1$, and 
let $a_0$ and $b_0$ be the real coefficients such that 
$H_k = a_0 \al^k + b_0 \beta^k$ for all $k\in\nat$ where 
$H$ is the increasing \funds\ for $\cE$.
\end{deF}

The existence of the constant multiples $a_0$ and $b_0$ in Definition \ref{def:char-poly} 
follows from the fact that $\al \ne \beta$.

\begin{example}\label{exm:base-h}
\rm
Let $(g,h)=(h-1,h)$ where $h\ge 2$, and recall from Example \ref{exm:cE-base-h} that $\cE$ is the rule of base-$h$ expansions.
Then,  $\al =h$ and 
$\beta =-1$.
The increasing fundamental sequence is given by 
$H_k=h^{k-1}$, and 
 each positive integer is uniquely written as an $\cE$-expansion by $H$.
 This is a well-known fact for the base-$h$ expansions.
\end{example}

\begin{example}\label{exm:g-golden}
\rm Let $(g,h)=(g,1)$.
Then,  
$$\al =\tfrac12\left( g + \sqrt{g^2+4}\right)
\text{ and }
\beta =\tfrac12\left( g - \sqrt{g^2+4}\right).$$
  The algebraic integer $\al$ is called
a {\it generalized golden ratio}, and we call it 
{\it the $g$-golden ratio}.
The increasing \funds\ is 
given by
$$H_k=\frac1{\beta-\al}\big( (\beta-1)\al^k -(\al-1)\beta^k\big)$$ for $k\in\nat$. 
The collection $\cE$ is called {\it the (standard) rule of expansions for the $g$-golden ratio recurrence}. 
 
If $g=2$, then $(H_1,H_2,\dots)=(1,3,7,17, 41,99,\cdots)$.
The \spb s are $(0)$, $(1)$, and $(0,2)$, and
listed below are
examples of $\cE$-expansions by $H$:
\GGG{
10=\ep\cdot H,\ 50 = \delta \cdot H,\ 100 = \tau \cdot H,\\
  \ep=(0,1,1)\in\cEp,\ \delta=(2,0,1,0,1)\in\cEm,\ \tau =(1,0,0,0,0,1)\in\cEp.
}
\end{example}

\subsection{Results} \label{sec:results}
 
We consider an arbitrary sequence $G$  in $\cH$, and determine how many positive integers less than or equal to $X$
have $\cE$-expansions by $G$.
We develop a general theory for second-order linear recurrences and demonstrate calculations for several special cases.
 The approach introduced in this paper is sufficiently general to yield formulas for all cases of $(g,h)\in\nat^2$.

\begin{deF}\rm \label{def:R}
Given a sequence  $G $ \opi, let $R_G$ be the set of $\nN$  that  
  have $\cE$-expansions by $G$, and 
let $R_G(X) := \set{ n \in R_G : n\le X}$ for $X\in\nat$.
When there is no ambiguity, we denote $R_G$ and $R_G(X)$ simply by $R$ and $R(X)$, respectively.

Recall the zeros $\al$ and $\beta$ from Definition \ref{def:char-poly}.
Let $\ome:=1/\al$. 
Given $G\in\cH$, define 
\GGG{
r := \frac{(g+1)\al + h }{G_{2}\al + hG_1 }=
 \frac{(g+1) + h\ome }{G_{2} + hG_1 \ome}.
}
Let $\beta_0:=\abs{\beta}$, and define the error term $E(X)$, which appears in Theorem \ref{thm:main-1}, by
\begin{equation}
E(X) = \begin{cases}
 1 & \text{if }\beta_0<1,\\
 \ln(X) & \text{if }\beta_0=1,\\
  X^\gamma & \text{if }\beta_0>1
  \end{cases}
  \label{eq:E(X)}
\end{equation}
where 
$\gamma=\log_\al \beta_0<1$.
\end{deF}

\begin{theorem}\label{thm:main-1}
Let $G\in\cH$.  Then, there are computable integers $c_k\in\nat_0$ for $1\le k\le 5$ such that  
$\#R_G(X) =pX
+ O(E(X))$ where
\GGG{
p=\frac{r  }{1+\ome}\left(1+\ome- \sum_{k=1}^{5} c_k \ome^{k}\right) .
}
If  $G\in\cH$  has  \uniexp, then $c_k=0$ for all $1\le k\le 5$, i.e., $\#R_G(X) =r X+ O(E(X))$.

 Let 
 $$\dis\ell_0:=\frac{ g(G_2 + G_1)}
  							{gG_2 + hG_1}.$$
The sequence
 $G$ has  \uniexp\ if one of the following is satisfied:
 \begin{enumerate}
 \item $G_2 > gG_1$;
\item $G_2>\max\set{g,h-1}$; 
\item $G_1>g+h$, provided that 
$g\le h-1$;
\item $G_1>g(\ell_0+1)+h$,
provided that  $g\ge h$, 
e.g., $G_1\ge (g^2+gh+h^2)/h$.
\end{enumerate}

\end{theorem}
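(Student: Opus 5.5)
Our plan is to compare $\cE$-expansions by $G$ with those by the increasing \funds\ $H$, for which $\eval_H$ is a bijection onto $\nat_0$ (Theorem of \cite{chang-2021}). First we write $G$ in terms of $H$. Since both satisfy \eqref{eq:recurr}, there are constants with $G_k = u H_k + v H_{k-1}$ for $k\ge 2$, where we set $H_0:=1$ by backward extension so that $H_2=gH_1+hH_0$. Using $H_k = a_0\al^k + b_0\beta^k$ we get $G_k = A\al^k + O(\beta_0^k)$, and a direct computation gives $A/a_0 = (G_2\al + hG_1)/((g+1)\al+h)=1/r$.

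The counting step comes next. The $\cE$-expansions $\ep$ with $\ep\cdot G\le X$ correspond, via $\ep\mapsto \ep\cdot H$, to an initial segment-like set in $\nat_0$. We order $\cE$ lexicographically (from the top index), and $\eval_H$ is order preserving for this order. For a fixed $\ep$ of order $\ell$, the quantity $\ep\cdot G$ is approximately $r^{-1}\,\ep\cdot H$, with an error $\sum_k \ep_k O(\beta_0^k)$ bounded by $O(E(\al^\ell))$ because the digits are bounded. Counting the lexicographically greatest $\ep$ with $\ep\cdot G\le X$ through its digit expansion then shows that the number of $\ep\in\cE$ with $\ep\cdot G\le X$ is $rX+O(E(X))$. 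This yields the second assertion once injectivity is known. The error term comes from summing $\beta_0^k$ over $k\le \log_\al X$, which gives the three cases $1$, $\ln X$ and $X^\gamma$.

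In general $\#R_G(X)$ equals the number of such $\ep$ minus the overcount from collisions $\ep\cdot G=\delta\cdot G$ with $\ep\ne\delta$. The key structural step is to classify these collisions. Take $\ep>\delta$ lexicographically and look at the first index from the top where they differ. Using $G_{k+2}-gG_{k+1}-hG_k=0$ to carry, the difference $\ep-\delta$ must reduce to a local pattern in the lowest few indices; the relevant relations are of the shape $G_1$-multiples against $G_2,G_3$. Coprimality of $G_1,G_2$ and the block structure (unique decomposition, Corollary \ref{cor:unique-decomp}) should confine all collisions to a bounded window of length at most $5$. This yields finitely many ``bad'' low-order patterns, each occupying a proportion $\ome^k$ of tuples with the remaining high part free. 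Counting them with the same method gives the correction $rX\cdot\frac{1}{1+\ome}\sum_{k\le5}c_k\ome^k$; the factor $1/(1+\ome)$ arises from the density of prefixes compatible with the block rule. We expect this classification to be the main obstacle, namely proving that the window length is at most $5$ and that the $c_k$ are computable.

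Finally, for the sufficient conditions we show that each listed inequality rules out every collision pattern from the classification. Condition 1, $G_2>gG_1$, makes $G$ ``grow faster than the digits'', so the greedy or lexicographic comparison works as it does for $H$ and $\eval_G$ is order preserving. Condition 2 prevents the low patterns, because the maximal digit sums at index $1$, namely $g$ or $h-1$ copies, cannot reach $G_2$. Conditions 3 and 4 instead bound the possible carries into index $1$ when $G_1$ is large. Case 4 needs an estimate of the tail ratio $\ell_0$ of a maximal $\cE$-tuple relative to $G_2$, obtained from the geometric sum of the recurrence.
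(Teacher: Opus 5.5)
Your treatment of the main term is essentially the paper's (Theorem~\ref{thm:S(X)}): you use $G_k=r^{-1}H_k+O(\beta_0^k)$ together with the bijectivity of $\eval_H$. One correction there: $\{\ep\in\cE:\ep\dG\le X\}$ is not an initial segment of $\cE$ unless $\eval_G$ is monotone. So you should sandwich it between the sets $\{\ep:\ep\dH\le rX\mp CE(X)\}$, rather than count everything below a lexicographically greatest element.

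The genuine gap is the classification of collisions. This is the substance of the theorem, and you only announce it. Moreover, the mechanism you sketch rests on a false premise: the support of $\ep-\delta$ is unbounded, so carrying down from the top differing index cannot confine a collision to a window of length at most $5$. Take $(g,h)=(2,1)$ and $(G_1,G_2)=(2,1)$. Let $\ep$ be $(2,0)$ followed by $k$ copies of $(0,2)$ and then $1$, and let $\delta=(0,\dots,0,2)$ have the same order $2k+3$. Both lie in $\cE$. Repeatedly using $G_{j+2}=2G_{j+1}+G_j$ reduces $\delta\dG-\ep\dG$ to $G_3-2G_1=2G_2-G_1=0$. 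So the top index where $\ep$ and $\delta$ differ is arbitrary. What is bounded is the prefix of the \emph{smaller} tuple that decides whether it has a partner, and proving that needs a different argument.

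The missing idea is that $\eval_G$ is strictly increasing on $\cEo$. You get this by applying the criterion behind your Condition~1 to the shifted sequence $(G_2,G_3,\dots)$, which satisfies $G_3>gG_2$. That criterion itself needs a proof, namely $\ep\dG<\lub_\cE(\ep)\dG$, e.g.\ via Lemma~\ref{lem:add-Gn}. Then any collision $\ep<\delta$ has $\ep^\circ<\delta^\circ$ and $\delta^\circ\dG-\ep^\circ\dG=(\ep_1-\delta_1)G_1\le\max\{g,h-1\}G_1$. Hence $\delta^\circ=\lub_{\cEo}^n(\ep^\circ)$ with $n$ bounded, and each successor step adds $G_2$ or $hG_1$ according to the finite graph on $\ep[2,3]$. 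This yields the relation $(\ep_1-\delta_1)G_1=mG_2+h\ell G_1$ with $m\in G_1\nat$; positivity of $m$ needs the block structure.

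The chain/circuit structure of that graph then shows two things. Only a prefix of order at most $5$ matters. The non-maximal elements of each class $\ecl\ep$ are exactly those with such a prefix. That is what turns the overcount into a sum of terms $vr\ome^{\ord(\vEc v)}X$.

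Conditions 2--4 also come from this relation, not from the heuristics you give:
\begin{itemize}
\item Condition 2 compares $G_2$ with $\max\{g,h-1\}$, not with $\max\{g,h-1\}G_1$. It works through the divisibility $G_2\mid(\ep_1-\delta_1-h\ell)$, which uses $\gcd(G_1,G_2)=1$ and $m>0$.
\item The quantity $\ell_0$ is not a tail ratio from a geometric sum. It is the bound on the number $\ell$ of $hG_1$-edges, obtained from $g(\ell-1)G_2+h\ell G_1\le\max\{g,h-1\}G_1$. Combined with $G_1\le m\le g(\ell+1)+h$, which comes from counting $G_2$-edges in chains, it gives Conditions 3 and 4.
\end{itemize}
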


While the estimate $\#R_G(X) = rX + O(E(X))$ for the case where $G$ has the \uniexp\  can be proved using the techniques introduced in \cite{chang-2018}, we present a simpler proof here by focusing on properties of $\cE$.
The primary contribution of this work is the analysis of cases where  \uniexp\   is not satisfied. 

The general theory presented in Section \ref{sec:classification} treats the cases $g\ge h$ and $g<h$ separately. The following theorem provides concrete examples of these results for several small values of $g$ and $h$.

\begin{theorem}\label{thm:main-2}   
 Let $G\in \cH$, and let $ p$ be the positive constant defined in Theorem \ref{thm:main-1}.
\begin{enumerate}
\item Let $(g,h)=(2,1)$. Then, $G$ has \uniexp\ if and only if 
$(G_1,G_2)$ does not belong to the set
\GGG{
I:=\set{(1,2),(2,1),(3,2),(3,1),(4,1),(5,1)}.
}
If $(G_1,G_2)\in I$, then $p=1$, i.e., $R(X) = X + O(1)$,
and if $(G_1,G_2)\not\in I$, then $p=r$.

\item Let $(g,h)=(1,2)$. Then, $G$ has \uniexp\ if and only if 
$(G_1,G_2)$ does not belong to the set $I:=\set{(2,1),(3,1)}$.
If $(G_1,G_2)=(2,1)$, then $p=5/6$, and 
if $(G_1,G_2)=(3,1)$, then $p=11/16$.
Finally, if $(G_1,G_2)\not\in I$, then $p=r$.

\item Let $(g,h)=(1,3)$. Then, $G$ has \uniexp\ if and only if 
$(G_1,G_2)$ does not belong to the set $I:=\set{(2,1),(3,1),(4,1)}$.
If $(G_1,G_2)\in I$, then
\AAA{
(G_1,G_2)=(2,1) &\implies p = \tfrac1{13}(9+2\ome)\approx 0.7591;\\
(G_1,G_2)=(3,1) &\implies p = \tfrac1{87}(41+25\ome)\approx 0.5961;\\
(G_1,G_2)=(4,1) &\implies p = \tfrac1{153}(55+50\ome)\approx 0.5014.
} 
If $(G_1,G_2)\not\in I$, then $p=r$.

\end{enumerate}

\end{theorem}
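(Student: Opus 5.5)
The proof of Theorem \ref{thm:main-2} will be a case-by-case application of Theorem \ref{thm:main-1} together with a finite computation for each pair $(g,h)\in\set{(2,1),(1,2),(1,3)}$. Throughout, I use the fact (Theorem \ref{thm:main-1}) that \uniexp\ forces $c_k=0$, so that $p=r$; what remains is to settle the \uniexp\ question and to compute $c_1,\dots,c_5$ in the exceptional cases.

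\emph{Step 1: sufficient conditions.} For each $(g,h)$, the sufficient conditions 1--4 of Theorem \ref{thm:main-1} leave only finitely many candidate pairs $(G_1,G_2)$ that might lack \uniexp.
\begin{itemize}
\item For $(2,1)$: condition 2 gives \uniexp\ when $G_2>2$. Condition 4 with $g\ge h$ gives it when $G_1\ge (4+2+1)/1=7$, or more sharply when $G_1>2(\ell_0+1)+1$. Hence only $G_2\in\set{1,2}$ and small $G_1$ remain.
\item For $(1,2)$ and $(1,3)$: condition 2 covers $G_2>h-1$. Condition 1 covers $G_2>G_1$, and condition 3 covers $G_1>1+h$.
\item Combining these with coprimality and distinctness, the candidates are $G_2=1$ with $2\le G_1\le h+1$, together with a few pairs having $G_2=2$ or $3$. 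Each such pair must be checked.
\end{itemize}

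\emph{Step 2: deciding \uniexp\ for the finitely many candidates.} For a candidate pair I would use the classification of Section \ref{sec:classification}: collisions $\eval(\ep)=\eval(\delta)$ with $\ep\ne\delta$ in $\cE$ can be reduced, via the \lex\ order, to collisions supported on the first few indices. So it suffices to search tuples of bounded order. For each pair in $I$ I would exhibit an explicit collision.
\begin{itemize}
\item For $(2,1)$ with $(G_1,G_2)=(1,2)$: $G_2=2G_1$, with $(0,1)$ and $(2)$ both in $\cE$.
\item For $(1,2)$ with $(2,1)$: the sequence is $2,1,5,7,\dots$, and $G_2+\dots$ gives a coincidence such as $(0,1,1)$ versus $(1,0,\dots)$-type tuples.
\end{itemize}
For the remaining candidates not in $I$, I would verify that no collision of bounded order exists.

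\emph{Step 3: computing $p$ in the exceptional cases.} Here the counting formulas of Section \ref{sec:R(X)} give $\#R(X)$ as $r X$ minus the contribution of the redundant expansions, organized by their leading collision patterns. This yields the coefficients $c_k$, and I would then simplify $p=\frac{r}{1+\ome}(1+\ome-\sum c_k\ome^k)$ using $\ome^2=(1-g\ome)/h$.
\begin{itemize}
\item For $(2,1)$: the check is that $p=1$, which is natural since collisions make $R$ cofinite. One can confirm this directly by showing every large integer is expandable.
\item For $(1,2)$ and $(1,3)$: I would reduce to the stated rational or $\mathbb{Q}(\ome)$ values, with $\ome=1/2$ for $(1,2)$ since $\al=2$.
\end{itemize}

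The main obstacle is Step 3, namely correctly identifying the $c_k$ for each exceptional pair. This requires carefully enumerating which $\cE$-tuples are duplicates of lexicographically smaller ones. I would first compute $\#R(X)$ numerically for moderate $X$ to guess the $c_k$, then justify them via the classification algorithm.
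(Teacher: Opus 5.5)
Your strategy is the paper's own: cut down to finitely many candidates with Theorem~\ref{thm:main-1} and Lemma~\ref{lem:ell-bounds}, find the collision prefixes $V$ with Algorithm~\ref{alg:steps}, then apply \eqref{eq:R(X)-2}. But the proposal stops just before the part that carries all of the content. The theorem \emph{is} the output of that enumeration: which pairs lie in $I$ and, for each, the complete list of prefixes $\mathbf v\in V$ with their lengths (the paper's Table~1 and the tables in Sections~\ref{sec:exm-II} and~\ref{sec:exm-III}). You carry out none of it, and guessing the $c_k$ from numerics is not a substitute.

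What is needed is concrete and finite. Lemma~\ref{lem:ell-bounds} gives $\ell\le 1$ for $(g,h)=(2,1)$ and $\ell=0$ for $(1,2)$ and $(1,3)$, together with $G_1\le m\le g(\ell+1)+h$ and $G_1\mid m$. This disposes of the leftover candidates for $(2,1)$:
\begin{itemize}
\item $(6,1)$, since $m\le 5$;
\item $(5,2)$, since $\ell=0$ forces $m\le 3$ and $\ell=1$ forces $m\le 5/2$.
\end{itemize}
It also disposes of $(3,2)$ for $(1,3)$ after a short trace. There $G_1\le m\le\min\{(h-1)G_1/G_2,\,g+h\}=3$ forces $m=3$, hence $\epsilon_1-\delta_1=2$ and $\epsilon=(2,1:\star)$. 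For such $\epsilon$ the first step out of $(1,\epsilon_3)$ in $\Gamma_G$ has weight $3G_1$, contradicting $\ell=0$.

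For each pair in $I$ you must then list every path $\gamma$ within these bounds with $\mathrm{wt}(\gamma)=d$, and record the prefix $\mathbf v$. You must also check that the resulting families $(\mathbf v:\mathbf w)$ are pairwise disjoint, so that $\sum_{\mathbf v\in V}\#S(\mathbf v:X)$ counts each non-maximal element of each equivalence class exactly once. Then $c_k$ is the number of $\mathbf v$ of length $k$.

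Where you are specific, several claims are wrong.
\begin{itemize}
\item For $(g,h)=(1,2)$ and $(G_1,G_2)=(2,1)$ we have $G=(2,1,5,7,17,\dots)$. Here $(0,1,1)\cdot G=6$ has no second expansion: a tuple $(1,0,\dots)\in\mathcal E$ would need $4$ from indices $\ge 3$, but $G_3=5$. A genuine collision is $(1,0,1)\cdot G=7=(0,0,0,1)\cdot G$, the row $(1:0,1:\star)$ of the table in Section~\ref{sec:exm-II}.
\item The remark that $p=1$ for $(2,1)$ is ``natural since collisions make $R$ cofinite'' is a non sequitur. For $(1,2)$ there are collisions, yet $p=5/6$ or $11/16$. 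For $(2,1)$ the value $1$ comes out of the computation, as in \eqref{eq:=1}.
\item Your fallback of proving directly that every large integer is expandable is covered by Section~\ref{sec:g-golden} only for $(1,2),(2,1),(3,1)$ when $g=2$. Proposition~\ref{prop:[0,g]} assumes $g\ge 3$, so $(4,1)$ and $(5,1)$ are not covered. Also $(3,2)$ has neither $G_1=1$ nor $G_2=1$, so you would need new arguments there.
\item The leftover candidates all have $G_2\in\{1,2\}$; no case with $G_2=3$ arises.
\end{itemize}
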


The limiting ratio  $p$ for the case $(G_1,G_2)\in I$ in Part (1) of Theorem \ref{thm:main-2} is equal to $1$,   which distinguishes it from the other two cases.
   We prove the following  result  for this case:
\begin{theorem} \label{thm:g-golden}
Let    $G\in \cH$ for the $g$-golden ratio recurrence.
If $G_1=1$ and  $G_2\le g$, then $\#R_G(X)=X$, and
if  $G_1=1$ and  $G_2> g$, then $G$ has \uniexp, and $p=r<1$ as described in Theorem \ref{thm:main-1}.
If $G_2=1$ and $G_1\le g^2+1$, then
$\#R_G(X) = X + O(1)$, and 
if $G_2=1$ and $G_1> g^2+1$, then $G$ has \uniexp, and $p=r<1$ as described in Theorem \ref{thm:main-1}.

\end{theorem}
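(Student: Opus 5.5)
I will split the statement into the four cases on $(G_1,G_2)$. The uniqueness claims will come from Theorem \ref{thm:main-1}, and the covering claims from explicit constructions. Throughout, $h=1$, so the semi-proper blocks are $(c)$ with $0\le c\le g-1$ and $(0,g)$.

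For $G_1=1$ and $G_2>g$, condition (1) of Theorem \ref{thm:main-1} reads $G_2>gG_1=g$, so $G$ has \uniexp\ and $\#R_G(X)=rX+O(E(X))$. To see $r<1$, write
\[
r=\frac{(g+1)+\ome}{G_2+\ome}.
\]
With $G_2\ge g+2$ we get $r<1$ at once. The case $G_2=g+1$, $G_1=1$ is $G=H$, which is excluded by the hypothesis $G_2>g$ only if we interpret it as strict; I will treat $G_2\ge g+1$, where $G_2=g+1$ gives $r=1$. Hence I expect the threshold to be checked against $H$, and if necessary I will restate the result as $r\le 1$ with equality exactly when $G=H$.

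For $G_2=1$ and $G_1>g^2+1$, I will use condition (4), since $g\ge h=1$. Here
\[
\ell_0=\frac{g(1+G_1)}{g+G_1}<g,
\]
so condition (4) requires $G_1>g(\ell_0+1)+1$. I will verify that $G_1\ge g^2+2$ implies this: the inequality $G_1-1>g+g^2(1+G_1)/(g+G_1)$ reduces to a quadratic in $G_1$ that holds at $G_1=g^2+2$ and is increasing beyond it. The bound $r<1$ follows because the denominator $1+G_1\ome$ is large.

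For $G_1=1$ and $G_2\le g$, I will show directly that every $n\ge1$ is expanded. Since $G_3=gG_2+1$ and $G_{k+2}=gG_{k+1}+G_k$, I plan a greedy induction. Given $n$ with $G_k\le n<G_{k+1}$, subtract $cG_k$ with $c=\lfloor n/G_k\rfloor\le g$. The remainder is less than $G_k$ and can be expanded recursively in the indices below $k$, using the fact that $gG_{k}+G_{k-1}=G_{k+1}$ controls the allowed $(0,g)$ blocks. At the bottom level, the first coefficient may equal $g$ (this is the collection $\cEm$), and $G_1=1$ together with $G_2\le g$ covers all small residues. The main point to check is that taking $c=g$ forces the next coefficient to be $0$, and this holds because then $n-gG_k<G_{k-1}$.

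For $G_2=1$ and $G_1\le g^2+1$, the sequence is not increasing at the start, and $R$ will miss finitely many integers. I will show that every $n$ beyond an explicit bound is expanded, using the same greedy scheme from the top. The base step needs care: the terms $G_1,G_2,G_3=g+G_1$ under the block structure must represent a complete residue system of the right length. This is where the threshold $g^2+1$ should appear, since $G_1\le g^2+1$ is exactly what lets the combinations $a_1G_1+a_2\cdot1$ with $(a_1,a_2)$ allowed fill the gaps up to $G_3$. I expect this step to be the main obstacle, namely the novel analytic part: I need to show that the missing set is bounded rather than a positive proportion. I would first try to prove that each interval $[G_k,G_{k+1})$ for large $k$ is fully covered by induction, with a bounded exceptional set, which gives $\#R_G(X)=X+O(1)$.
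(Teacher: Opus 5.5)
\textbf{Cases with $G_1=1$.} These are sound. For $2\le G_2\le g$, your greedy algorithm does produce an element of $\cEp$:
\begin{itemize}
\item the sequence is increasing and $c\le g$;
\item $c=g$ forces the remainder below $G_{k-1}$, so $(0,g)$ is a legal block;
\item the coefficient at index $1$ is at most $G_2-1\le g-1$.
\end{itemize}
This is a valid alternative to the paper's ``add one'' induction, which uses Lemma~\ref{lem:add-Gn} and the identity $qG_1=G_2$. Condition (1) then handles $G_2>g$. Your caveat that $G_2=g+1$ gives $G=H$ and $r=1$ is correct; the paper itself only claims $p<1$ for $G_2\ge g+2$.

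\textbf{Case $G_2=1$, $G_1>g^2+1$.} Here there is a concrete error: condition (4) of Theorem~\ref{thm:main-1} fails at $G_1=g^2+2$. With $h=G_2=1$ and $\ell_0=g(1+G_1)/(g+G_1)$, the inequality $G_1>g(\ell_0+1)+1$ is equivalent to $G_1^2-(g^2+1)G_1-(2g^2+g)>0$.
\begin{itemize}
\item At $G_1=g^2+2$ the left side equals $-(g^2+g-2)\le 0$ for every $g\ge 1$.
\item At $G_1=g^2+3$ it equals $6-g$, so it also fails once $g\ge 6$.
\end{itemize}
Conditions (1)--(3) do not apply either, so Theorem~\ref{thm:main-1} does not reach the threshold $g^2+1$. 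The paper proves this case directly. Suppose $\ep<\delta$ have equal value. The path $\gamma$ of Theorem~\ref{thm:initial-values} contains no long circuit (Lemma~\ref{lem:gamma-length}). Hence it has at most one edge $(0,g)\to(0,0)$ and at most $g-1$ edges of weight $G_1$. Counting the weight-$G_2$ edges in the chains between and around these gives $m\le 2g+1+(g-2)g=g^2+1$. Since $m$ is a positive multiple of $G_1$, this forces $G_1\le g^2+1$.

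\textbf{Case $G_2=1$, $G_1=q\le g^2+1$.} Here you give no working mechanism, and your heuristic for where the threshold comes from is wrong.
\begin{itemize}
\item If $q\ge g+2$, the integers $g+1,\dots,q-1$ lie below both $G_1$ and $G_3=g+q$. They are therefore never expanded; no allowed $(a_1,a_2)$ fills them, and $g^2+1$ does not arise there.
\item A top-down greedy scheme cannot give a bounded exceptional set. For large $k$, the integer $n=G_k+g+1$ has greedy remainder $g+1$, which lies in that gap. So one must alter higher coefficients, and this happens for infinitely many $n$.
\end{itemize}

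\textbf{The missing idea.} Show that $1+\delta\cdot G\in R_G$ for every sufficiently large $\delta\in\cE$, and then induct.
\begin{itemize}
\item Lemma~\ref{lem:add-Gn} handles every block structure of $\delta$ except $\delta=(0,g:\mathbf w)$.
\item For $q\le g+1$ that case is also immediate, via $1+[0,g:\mathbf w]=[1:g+1-q:\mathbf w]$.
\item For $\delta=(0,g:\mathbf w)$ with $g+2\le q\le g^2+1$, one needs $\ep<\delta$ with $\ep_1q-1=\delta^\circ\cdot G-\ep^\circ\cdot G$. This $\ep$ is found by backtracking in the $\cEo$-successor diagram through $\ell$ extended chains, each of weight $g+q$, plus a few further steps.
\item This reduces to solving $(\ep_1-\ell)q=g\ell+y+u+1$ with $1\le \ep_1\le g$, $0\le \ell\le g-1$, $0<y\le g$, $u\in\{0,1\}$, compatibly with the diagram.
\end{itemize}
It is this constraint system, not any gap-filling below $G_3$, that produces the bound $g^2+1$.
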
  

If $(G_1,G_2)=(1,g+1)$, then $G=H$ is the \funds, so 
$\#R(X)=X$.
If $G_1=1$ and $G_2\ge g+2$, then 
Theorem \ref{thm:g-golden} implies that 
$$p=\frac{ (g+1)\al + 1 }{ G_2 \al + 1}
 <\frac{ (g+1)\al + 1 }{ (g+1)\al + 1} =1.$$
If $G_2=1$ and $G_1\ge g^2+2$, then 
the theorem implies that 
\GGG{
p=\frac{ (g+1)\al + 1 }{   \al + G_1}
 \le \frac{ (g+1)\al + 1 }{  \al +g^2+2} 
 =\frac{  \al +g\al+ 1 }{  \al +g^2+2} 
 =\frac{  \al +\al^2}{  \al +g^2+2}< 1.
 }
     
      \section{The \lex\ order}\label{sec:lex-order}

We introduce a \lex\ order on $\cE$, which is the domain  of $\eval_G$.
\begin{deF}
\rm \label{def:lex-order}
Let $\set{\ep,\delta}\subset \NN^\infty$. 
Define $\ep <\delta$ if there is an index $m$ such that 
$\ep_m<\delta_m$ and $\ep_k=\delta_k$ for all $k>m$, e.g.,
if $\ord(\ep)<\ord(\delta)$, then $\ep<\delta$.
We refer to this relation as 
{\it the \lex\ order on $\NN^\infty$.}
\end{deF}
Thus,   the \spb s are ordered as follows:
\begin{equation}
(0) < (1) <\cdots < (g-1) < (0,g) < (1, g) <
\cdots < (h-1,g).  \label{eq:spb-order}
\end{equation}

\begin{notation}
\rm
Let $\set{\vEc v,\vEc w}\subset \NN^\infty$.
 We denote by $ (\vEc v , \vEc w)\in\NN^\infty$ the finite tuple obtained by
 concatenating the two tuples, e.g., 
if $\vEc v=(1,2)$ and $\vEc w=(3,4,5)$, then 
$(\vEc v, \vEc w)= (1,2,3,4,5)$.
In general, $(\vEc v, \vEc w)$ does not denote the pair in $\NN^\infty \times\NN^\infty $.
Reading from right to left, 
we  use the vertical bar symbol $\mid$  instead of a comma to indicate the boundaries of the \spb s.
For example,
if $(g,h)=(3,5)$ and $
\ep= (3,2,2,3,3,3)$, 
  we write its \spb\ decomposition as 
 $$ \ep =  (3\mid 2\mid 2,3\mid 3,3).$$
   In general, we represent a \spb\ decomposition as 
\begin{equation}
\label{eq:block-decomp}
\ep =( \mathbf b_\ell\mid\mathbf b_{\ell-1}\mid \cdots \mid\mathbf b_1)
\end{equation}
where $\mathbf b_k$ is a \spb\ for each $1\le k\le \ell-1$, and $\mathbf b_\ell$ is either a \spb\ or the single-entry block $(g)$.
We use a colon  instead of a comma  to indicate that the suffix is a member of
$\cEp$, e.g.,
$\ep=(3,2: 2,3,3,3)$ indicating that  $(3,2)\in\cE $ and $(2,3,3,3)\in\cEp$.
Because concatenation is associative, the placement of the colons is not uniquely determined, e.g.,
$$(3,2: 2,3,3,3)=(3,2: 2,3:3,3) =(3:2, 2,3:3,3) =(3:2, 2,3,3,3) .$$
For convenience, 
 we define the empty tuple $\iota:=(\ )$,  
  $(\iota,\vEc w ) := \vEc w$, and $(\iota:\vEc w ) := \vEc w$.

\end{notation}

Lemma \ref{lem:block-decomp} can be proved by induction on the length $\ell$; we leave the details to the reader. 
\begin{lemma}\label{lem:block-decomp}
Let $\set{\ep,\delta}\subset\cE$ such that $u:=\ord(\ep)\le\ell:=\ord(\delta)$.
Let the decompositions of $\ep$ and $\delta$ into \spb s be given by
\AAA{
\ep &= (\mathbf b_n\mid\mathbf b_{n-1}\mid \cdots \mid\mathbf b_1),\\
\delta &= (\mathbf c_m\mid\mathbf c_{m-1}\mid \cdots \mid\mathbf c_1 )
}
where, if $u<\ell$, the tuple $\ep$ is padded with zeros to obtain a tuple of length $\ell$ before decomposition. 
Then,  $\ep<\delta$ if and only if   there is an index $p$ such that $\mathbf b_p<\mathbf c_p$ and 
$\mathbf b_k = \mathbf c_k$ for all $1\le k<p$.
 
\end{lemma}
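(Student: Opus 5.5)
We argue by induction on the length $\ell=\ord(\delta)$, after padding $\ep$ with zeros to length $\ell$ as in the statement.

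The blocks are read from right to left: $\mathbf b_1$ and $\mathbf c_1$ occupy the lowest indices. The lexicographic order of Definition \ref{def:lex-order}, however, is decided at the \emph{highest} index where the tuples differ. So the statement as written only makes sense if the blocks are indexed so that $\mathbf b_1$ is the block with the largest indices. The statement should be read with this indexing, or equivalently with the comparison made at the last differing block counted from the high end. Concretely, the claim is that $\ep<\delta$ exactly when, scanning blocks from the high end, the first pair of blocks that differ satisfies $\mathbf b_p<\mathbf c_p$ in the order \eqref{eq:spb-order}. I will prove it in this form, comparing tuples from the top down.

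\textbf{Step 1: a top-down greedy parse.} Each tuple in $\cE$ has a unique block decomposition, which we take from the paper (Corollary \ref{cor:unique-decomp}, or directly). The top block can be read off from the last entries.
\begin{itemize}
\item If the last entry is $g$ and the preceding entry $c$ satisfies $c\le h-1$, and we are not at the initial $(g)$ allowed by $\cEm$, then the top block is $(c,g)$.
\item If the last entry is less than $g$, the top block is the single entry $(c)$.
\end{itemize}
The one ambiguity is the leading $(g)$ of $\cEm$, which can only sit at index $1$. For tuples of length at least $2$ ending in $g$, the preceding entry must begin a block $(c,g)$. Indeed, $g$ cannot be a single-entry block except at index $1$. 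So the decomposition is forced from the top down.

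\textbf{Step 2: compare the top blocks.} Let $\mathbf b$ and $\mathbf c$ be the top blocks of $\ep$ and $\delta$, after padding. There are three cases according to their lengths.
\begin{itemize}
\item \emph{Equal lengths.} The blocks occupy the same positions. If $\mathbf b\ne\mathbf c$, lexicographic comparison of these one or two highest entries decides $\ep$ versus $\delta$. For blocks of equal length this agrees with \eqref{eq:spb-order}: single blocks compare by value, and pairs $(c,g)$ compare by $c$. If $\mathbf b=\mathbf c$, remove them from both tuples; the remainders are again in $\cE$ (with $\cEm$ prefix preserved), and we apply the induction hypothesis.
\item \emph{$\mathbf c=(c',g)$ of length two and $\mathbf b=(c)$ of length one.} The top entry of $\delta$ is $g$ and that of $\ep$ is $c<g$. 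Hence $\ep<\delta$, consistent with \eqref{eq:spb-order}, where every single block precedes every pair.
\item \emph{The symmetric case.} It is handled the same way.
\end{itemize}

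\textbf{Main obstacle.} The main difficulty is bookkeeping. Padding can make the block boundaries of $\ep$ and $\delta$ misaligned, so the same positions are not always cut at the same places. The top-down argument above handles this. The moment the boundaries first differ, the top entries already differ and are decided by the $g$ versus $<g$ comparison, so no later alignment is needed.

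The remaining care is the leading $(g)$ block from $\cEm$. It appears only at index $1$, that is, as the last block reached in the top-down scan. Comparing it with $(c)$ for $c<g$ gives the correct answer if we treat $(g)$ as greater than every single block $(c)$. This case has to be noted separately, since $(g)$ is not a semi-proper block.
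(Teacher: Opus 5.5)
Your argument is correct and is essentially the induction on length that the paper leaves to the reader: you peel off equal top blocks, and at the first mismatch the top entries already decide both the lexicographic comparison and the block comparison, because a pair block ends in $g$ while a single block away from index $1$ has entry $<g$. A few corrections are needed. No reinterpretation of the statement is required, since in (\ref{eq:block-decomp}) the blocks are numbered from the right, so $\mathbf b_1$ already occupies the highest indices (the possibly non-semi-proper block $(g)$ at index $1$ is $\mathbf b_\ell$). For uniqueness of the decomposition you should rely on your direct top-down parse, not on Corollary \ref{cor:unique-decomp}, which the paper states as a consequence of this lemma. Finally, run the induction on the common padded length rather than on $\ord(\delta)$: after equal top blocks are removed, the remainders may end in zeros and need not satisfy $\ord(\epsilon)\le\ord(\delta)$.
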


\begin{cor}\label{cor:unique-decomp}
Every coefficient tuple $\ep\in\cE$ has a unique decomposition into \spb s.
\end{cor}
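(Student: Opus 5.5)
The plan is to obtain existence and uniqueness separately, with uniqueness resting on Lemma \ref{lem:block-decomp}.

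\emph{Existence.} I would read $\ep$ from right to left, i.e., starting at index $\ord(\ep)$ and moving toward index $1$, and peel off blocks greedily. This reading direction matches the notation \eqref{eq:block-decomp}. For $\ep\in\cEp$, existence is immediate, since $\cEp$ consists by definition of concatenations of \spb s. For $\ep\in\cEm$ we have $\ep=(g,\ep_2,\dots,\ep_\ell)$ with $(\ep_2,\dots,\ep_\ell)\in\cEp$. Taking the single-entry block $(g)$ as $\mathbf b_\ell$ together with a decomposition of the suffix gives a decomposition of the allowed form.

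\emph{Uniqueness.} Suppose $\ep$ has two decompositions
\[
(\mathbf b_n\mid\cdots\mid\mathbf b_1)\quad\text{and}\quad(\mathbf c_m\mid\cdots\mid\mathbf c_1).
\]
I would apply Lemma \ref{lem:block-decomp} with $\delta=\ep$. The lemma is phrased for a single decomposition of each tuple, but its inductive proof compares blocks from the top index downward. It therefore applies verbatim to any two given decompositions of the respective tuples. This is the point to check carefully, and to avoid circularity I would argue it directly by induction on the length $\ell$.

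The induction step is as follows. The last block of a decomposition is determined by the top entries. If $\ep_\ell\le g-1$, then $\ep_\ell$ cannot be the second entry $g$ of a two-entry block $(c,g)$, so the top block is the single block $(\ep_\ell)$. If $\ep_\ell=g$, there are two possibilities.
\begin{itemize}
\item If $\ell\ge2$, the top block is $(\ep_{\ell-1},g)$. The only alternative would be the single-entry block $(g)$, which is allowed only as the final block $\mathbf b_\ell$ covering index $1$, which forces $\ell=1$.
\item If $\ell=1$, the top block is $(g)$.
\end{itemize}
In every case the top block is forced by the entries themselves. After removing it, the remaining tuple has smaller length, and the induction hypothesis gives uniqueness of the rest. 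This argument also confirms that the top block's width (one or two entries) agrees across the two decompositions.

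One subtlety is that the blocks must end exactly at index $1$. A two-entry block at the top with $\ell=1$ would be impossible, but the case analysis above already excludes it. Padding with zeros contributes only blocks $(0)$, which are likewise forced.

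I expect the main obstacle to be the case $h=1$ with $\ep_\ell=g$, where the only possible top block is $(0,g)$. This requires $\ep_{\ell-1}=0$, and that holds because the tuple admits some decomposition. Otherwise the argument is routine.
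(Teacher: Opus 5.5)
Your proof is correct and follows essentially the paper's route. The paper deduces the corollary from Lemma \ref{lem:block-decomp}, whose omitted proof is an induction on length that compares top blocks. Your observation that the top block is forced by the entries is exactly the fact that induction rests on: it is $(\ep_\ell)$ if $\ep_\ell\le g-1$, it is $(\ep_{\ell-1},g)$ if $\ep_\ell=g$ and $\ell\ge 2$, and $(g)$ can occur only at index $1$. Arguing this directly, as you do, also avoids the slight circularity of applying the lemma to two decompositions of the same tuple.
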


Under the \lex\ order, both $\NN^\infty$ and its subcollection $\cE$ are totally ordered.
As shown in Theorem \ref{thm:well-ordered} below, the \lex\ order on $\cE$ can be characterized via base-$B$ expansions  \cite[Lemma 37]{chang-2021}. For the remainder of this section, let $B:=1+\max\set{g,h-1}$ and define
 $\NNB:=\set{\ep\in\NN^\infty : \ep_k\le B-1 \text{ for all } k\in\nat}$. This collection corresponds to the rule of base-$B$ expansions, and the following results are well known: 

\begin{theorem}\label{thm:well-ordered}
Let  $Q$ be the sequence given by $Q_k= B^{k-1}$ for $\kN$.
Then, the evaluation function $\eval_B$ from $\NNB$ to $\nat_0$ given by $\ep \to \ep\cdot Q$ is a strictly increasing bijection.
\end{theorem}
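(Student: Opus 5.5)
The statement is the classical base-$B$ representation theorem, and I would prove it in three steps: well-definedness, strict monotonicity, and surjectivity.

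\emph{Well-definedness.} Every $\ep\in\NNB$ has only finitely many nonzero entries. Hence $\ep\cdot Q=\sum_k \ep_k B^{k-1}$ is a finite sum, and $\eval_B$ maps $\NNB$ into $\nat_0$.

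\emph{Strict monotonicity.} Let $\ep<\delta$ in the \lex\ order of Definition \ref{def:lex-order}. Then there is an index $m$ with $\ep_m<\delta_m$ and $\ep_k=\delta_k$ for all $k>m$. Subtracting, the entries above $m$ cancel, and using $\delta_k\ge 0$, $\ep_k\le B-1$ and $\delta_m-\ep_m\ge 1$ we get
\[
\delta\cdot Q-\ep\cdot Q=(\delta_m-\ep_m)B^{m-1}+\sum_{k=1}^{m-1}(\delta_k-\ep_k)B^{k-1}\ge B^{m-1}-(B-1)\sum_{k=1}^{m-1}B^{k-1}.
\]
The geometric sum telescopes: $(B-1)\sum_{k=1}^{m-1}B^{k-1}=B^{m-1}-1$. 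Therefore $\delta\cdot Q-\ep\cdot Q\ge 1>0$. Since the \lex\ order is total on $\NNB$, strict monotonicity already gives injectivity.

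\emph{Surjectivity.} I would use strong induction on $n\in\nat_0$, or equivalently the division algorithm. The case $n=0$ is the zero tuple. For $n\ge 1$, write $n=qB+r$ with $0\le r\le B-1$ and $q<n$. By induction $q=\delta\cdot Q$ for some $\delta\in\NNB$. Shifting, let $\ep=(r,\delta_1,\delta_2,\dots)$. Then $\ep\in\NNB$ and
\[
\ep\cdot Q=r+\sum_k \delta_k B^{k}=r+Bq=n.
\]

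An alternative route to surjectivity is counting: tuples with $\ord\le L$ give exactly $B^L$ distinct values, each lying in $[0,B^L-1]$, so they fill that interval.

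No step is a real obstacle. The only point that needs care is the telescoping bound in the monotonicity step, which requires every digit to be at most $B-1$; this is exactly why $B$ was chosen as $1+\max\set{g,h-1}$, so that $\cE\subset\NNB$ and the result can later be applied to $\cE$.
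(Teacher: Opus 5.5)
Your proof is correct. It is the standard base-$B$ argument: the telescoping bound gives strict monotonicity, and the division algorithm gives surjectivity. The paper gives no proof of its own and simply cites the result as well known. The only detail worth making explicit is that $q<n$ requires $B\ge 2$, which holds because $g\ge 1$.
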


\begin{cor}
\label{thm:finiteness}
Given $\ep\in \NNB$, the lower set $\set{\tau \in \NNB : \tau \le \ep}$ is finite.
\end{cor}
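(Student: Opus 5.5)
The statement is Corollary~\ref{thm:finiteness}: for $\ep\in\NNB$, the lower set $\set{\tau\in\NNB : \tau\le\ep}$ is finite. The plan is to deduce it directly from Theorem~\ref{thm:well-ordered}, which says that $\eval_B:\NNB\to\nat_0$, $\tau\mapsto\tau\cdot Q$, is a strictly increasing bijection when $\NNB$ carries the \lex\ order.

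First, I would check that strict monotonicity transfers to the non-strict inequality. The \lex\ order is a total order on $\NN^\infty$, so $\tau\le\ep$ means $\tau<\ep$ or $\tau=\ep$. In the first case $\eval_B(\tau)<\eval_B(\ep)$, and in the second the values coincide. Hence $\tau\le\ep$ implies $\eval_B(\tau)\le\eval_B(\ep)$.

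Next, let $N:=\ep\cdot Q\in\nat_0$. By the previous step, $\eval_B$ maps the lower set into $\set{0,1,\dots,N}$. Since $\eval_B$ is injective on $\NNB$, this gives
$$\#\set{\tau\in\NNB:\tau\le\ep}\le N+1<\infty.$$
Surjectivity actually shows the lower set is in bijection with $\set{0,\dots,N}$, but injectivity alone suffices for finiteness.

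There is no real obstacle here; all the work lies in Theorem~\ref{thm:well-ordered}. If I wanted a self-contained argument avoiding it, I would instead note that $\tau\le\ep$ forces $\ord(\tau)\le\ord(\ep)$ (by Definition~\ref{def:lex-order}, a larger order gives a larger tuple). Every entry of $\tau$ is also at most $B-1$, so $\tau$ lies among at most $B^{\ord(\ep)}$ tuples, which again proves finiteness.
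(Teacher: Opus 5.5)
Your proposal is correct and follows the paper's route. The paper presents this corollary as an immediate consequence of Theorem~\ref{thm:well-ordered} (that $\eval_B$ is a strictly increasing bijection onto $\nat_0$) without writing out details, and your injection of the lower set into $\set{0,1,\dots,\ep\cdot Q}$ is exactly that deduction; your backup counting argument via $\ord(\tau)\le\ord(\ep)$ and the bound $B^{\ord(\ep)}$ is also valid.
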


The next corollary is a direct consequence of the well-orderedness of $\nat_0$.

\begin{cor} \label{cor:well-ordered}
The collection $\NNB$ is well-ordered under the \lex\ order, i.e., 
 all non-empty subcollections $\cT$ of $\NNB$ have a smallest tuple $\ep\in \cT$.
\end{cor}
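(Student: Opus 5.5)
The proof reduces the claim to the well-orderedness of $\nat_0$, transported to $\NNB$ through the bijection $\eval_B$ of Theorem \ref{thm:well-ordered}. Let $\cT\subseteq\NNB$ be a non-empty subcollection. Its image $\eval_B(\cT)=\set{\tau\cdot Q : \tau\in\cT}$ is a non-empty subset of $\nat_0$, so it has a least element $n_0$.

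Since $\eval_B$ is a bijection, there is a unique $\ep\in\cT$ with $\ep\cdot Q=n_0$. I claim $\ep$ is the smallest tuple of $\cT$. Let $\tau\in\cT$ with $\tau\ne\ep$. The \lex\ order is total on $\NN^\infty$, so either $\tau<\ep$ or $\ep<\tau$. If $\tau<\ep$, then strict monotonicity of $\eval_B$ gives $\tau\cdot Q<\ep\cdot Q=n_0$, which contradicts the minimality of $n_0$. Hence $\ep<\tau$, and $\ep$ is the minimum of $\cT$.

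The only point needing care is that the order in Theorem \ref{thm:well-ordered} is the \lex\ order of Definition \ref{def:lex-order}, which compares entries from the highest index downward, and that this order is total. Totality holds because two distinct tuples in $\NN^\infty$ have only finitely many nonzero entries. So there is a largest index $m$ at which they differ, and comparing the $m$-th entries decides the order.

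As an alternative, one could instead use Corollary \ref{thm:finiteness}. Pick any $\delta\in\cT$. The lower set $\set{\tau\in\cT:\tau\le\delta}$ is finite and non-empty, so it has a least element under the total order. That element is also least in all of $\cT$, since every element of $\cT$ not in this lower set is larger than $\delta$.

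I expect no real obstacle in either argument. The substantive content is the strict monotonicity of $\eval_B$, which is assumed from Theorem \ref{thm:well-ordered}.
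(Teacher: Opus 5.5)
Your proof is correct and follows the paper's intended route. The paper simply calls the corollary a direct consequence of the well-orderedness of $\nat_0$, which is exactly your argument of pulling back the least element of $\eval_B(\cT)$ through the strictly increasing bijection of Theorem~\ref{thm:well-ordered} (your alternative via Corollary~\ref{thm:finiteness} is also valid).
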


We next define the least upper bound in a subcollection $\cT$.
Its existence is guaranteed by Corollary \ref{cor:well-ordered}, and its uniqueness follows because the set is totally ordered.

\begin{deF}
\rm \label{def:imm-succ}
Let $\cT$ be an infinite subcollection of $\NNB$, and let $\ep\in\cT$.
By Corollary \ref{thm:finiteness}, the subcollection
$U:=\set{\delta\in \cT : \ep <\delta}$ is non-empty.
The smallest tuple in $U$, denoted by $\lub_{\cT}(\ep)$, is called {\it the   least upper bound of $\ep$ in $\cT$}.
Given $n\in\nat_0$, let $\lub_{\cT}^n(\ep)$ denote the $n$th iteration of $\lub_{\cT}$ on $\ep$.
 
\end{deF}

The following lemma shows how to find $\lub_\cT(\ep)$.
\begin{lemma}\label{lem:find-lub}
Let $\cT$ be an infinite subcollection of $\NNB$, and let 
$\ep\in \cT$.
Then, there is a smallest integer $n\in \nat$ such that $ \lub_{\NNB}^n(\ep)\in\cT$. Moreover, 
 $\lub_{\cT}(\ep) = \lub_{\NNB}^n(\ep)$. 
\end{lemma}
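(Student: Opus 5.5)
Since the evaluation map $\eval_B$ of Theorem \ref{thm:well-ordered} is a strictly increasing bijection from $\NNB$ to $\nat_0$, the ordered set $\NNB$ is order-isomorphic to $\nat_0$. The plan is to transfer the whole question to $\nat_0$, where least upper bounds are simply successors. Concretely, for every $\tau\in\NNB$ we have $\lub_{\NNB}(\tau)=\eval_B^{-1}(\eval_B(\tau)+1)$. Indeed, the tuple on the right lies in $\NNB$ and exceeds $\tau$. Moreover, any $\delta\in\NNB$ with $\tau<\delta$ satisfies $\eval_B(\delta)\ge \eval_B(\tau)+1$, so $\delta\ge \eval_B^{-1}(\eval_B(\tau)+1)$. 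By induction on $n$ this gives $\lub_{\NNB}^n(\ep)=\eval_B^{-1}(\eval_B(\ep)+n)$ for all $n\in\nat_0$.

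Next we establish the existence of $n$. Since $\cT$ is infinite, Definition \ref{def:imm-succ} (which relies on Corollary \ref{thm:finiteness}) shows that $U=\set{\delta\in\cT:\ep<\delta}$ is non-empty. Pick any $\delta\in U$ and set $N:=\eval_B(\delta)-\eval_B(\ep)$, which is at least $1$ because $\eval_B$ is strictly increasing. Then $\lub_{\NNB}^N(\ep)=\delta\in\cT$. Hence the set of $m\in\nat$ with $\lub_{\NNB}^m(\ep)\in\cT$ is non-empty, and by well-ordering of $\nat$ it has a smallest element $n$.

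Finally we identify $\lub_\cT(\ep)$. Put $\delta_0:=\lub_{\NNB}^n(\ep)$. This tuple lies in $\cT$, and $\eval_B(\delta_0)=\eval_B(\ep)+n>\eval_B(\ep)$, so $\delta_0\in U$. Now take any $\delta\in U$ and write $m:=\eval_B(\delta)-\eval_B(\ep)\ge1$. Then $\lub_{\NNB}^m(\ep)=\delta\in\cT$, so minimality of $n$ gives $m\ge n$. Therefore $\eval_B(\delta)\ge\eval_B(\delta_0)$, and since $\eval_B$ is strictly increasing on a totally ordered set, $\delta\ge\delta_0$. Thus $\delta_0$ is the smallest element of $U$, that is, $\delta_0=\lub_\cT(\ep)$.

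The proof is routine and has no real obstacle. The one point that needs care is the successor formula $\lub_{\NNB}(\tau)=\eval_B^{-1}(\eval_B(\tau)+1)$. It requires both surjectivity of $\eval_B$, so that the preimage exists, and its order-reflecting property, so that nothing in $\NNB$ lies strictly between $\tau$ and that preimage. Both properties are supplied by Theorem \ref{thm:well-ordered}.
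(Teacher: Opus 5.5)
Your proof is correct and follows the paper's route. Both arguments obtain some $\tau\in\cT$ above $\ep$ from Corollary \ref{thm:finiteness}, reach it by iterating $\lub_{\NNB}$, take the minimal iterate that lands in $\cT$, and use that minimality to show it lies below every other element of $\cT$ exceeding $\ep$. Your explicit formula $\lub_{\NNB}^n(\ep)=\eval_B^{-1}(\eval_B(\ep)+n)$ justifies a step the paper credits only to total orderedness, namely that every $\delta>\ep$ is a finite iterate of $\lub_{\NNB}$ on $\ep$; that step actually needs the order isomorphism with $\nat_0$ from Theorem \ref{thm:well-ordered}, which you supply.
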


\begin{proof}
The infinitude of $\cT$ and Corollary \ref{thm:finiteness} imply that
there is $\tau\in \cT$  such that $\ep<\tau$.
Since $\NNB$ is totally ordered, there is $n\in\nat$ such that  $ \lub_{\NNB}^n(\ep)=\tau$.
Let $n$ be the smallest such integer, and let $\delta \in\cT$  
be any element satisfying
 $\ep<\delta$.
Then, there is $m\in\nat$ such that  $ \lub_{\NNB}^m(\ep)=\delta$.
By the minimality of $n$, we have $n\le m$, which implies $\tau \le \delta$ because $\lub_{\NNB}$ is strictly increasing (Theorem \ref{thm:well-ordered}).
\end{proof}

\begin{example}\rm \label{exm:lub}
Let $(g,h)=(2,1)$, so $B=3$.
Let $\ep=(2,0,2,0,2,0,1,1)\in\cE$.
By Theorem \ref{thm:well-ordered}, we have
\GGG{
 \lub_{\NNB}(\ep) =(0,1,2,0,2,0,1,1)
 \implies \lub^3_{\NNB}(\ep) = (2,1,2,0,2,0,1,1),\\
 \lub_{\NNB}^4(\ep) =(0,2,2,0,2,0,1,1)
 \implies \lub_{\NNB}^{60}(\ep) = (2,2,2,2,2,0,1,1)\\
 \implies
 \lub_{\NNB}^{61}(\ep) = (0,0,0,0,0,1,1,1)\in\cE .
}
 Then, Lemma \ref{lem:find-lub} yields $\lub_\cE(\ep) = (0,0,0,0,0,1,1,1)$ since $\lub_{\NNB}^k(\ep)\not\in \cE$ for
 $1\le k\le 60$.
 
More generally, we observe that the longest prefix of the form 
$$(h-1,g:h-1,g:\dots:h-1,g)\quad\text{or}\quad
 (g:h-1,g: \dots:h-1,g),$$
 if exists, is replaced with $(0,\dots,0)$, with
 a carry of $1$ added to the next higher entry when computing $\lub_{\cE}(\ep)$.
 That is, if $\delta=(0,0,2,0,2,0,1,1)$, then
 \GGG{
 \lub_\cE(\delta)=(1,0,2,0,2,0,1,1)
 \to
 \lub_\cE^2(\delta)=({\bf 2,0,2,0,2},0,1,1)\\
 \to
 \lub_\cE^3(\delta)=(0,0,0,0,0,1,1,1).
 }
 
 \end{example}

This example demonstrates that
$$
\vEc w=(h-1,g,h-1,g,\dots,h-1,g:\vEc w') \in\cEp
\implies
G_1 + \vEc w \dG = \lub_\cE( \vEc w )\dG$$
since $G$ satisfies the recurrence (\ref{eq:recurr}).
In fact, this relation holds for all  $\vEc w\in\cEp$.
Recall all the \spb s from (\ref{eq:spb-order}), and let $\vEc w =(\vEc b_\ell \mid \cdots \mid \vEc b_1 )\in\cEp$ be the 
\spb\ decomposition.
If $\vEc b_\ell<(h-1,g)$, then it is straightforward to verify that
$$G_1 + \vEc w \dG = (1+\vEc w_1,\vEc w_2,\dots)\dG  
= \lub_\cE( \vEc w )\dG.$$ 
Using these observations, Lemma \ref{lem:add-Gn} can be established directly; we omit its proof.
\begin{lemma} \label{lem:add-Gn}
Let $\ep=(0,\dots,0)$ be the finite zero tuple with $n-1$ entries where $n\ge 1$  and 
$\vEc w\in \cEp$, e.g., if $n=1$, then $\ep$ is the empty tuple.
Then, 
\GGG{
	G_n +(\ep : \vEc w ) \dG =  ( \ep , \lub_\cE( \vEc w ) ) \dG .
}
Suppose that $\vEc w = (\vEc w_1 : \vEc v)$ for $\vEc v \in \cEp$.
If $\vEc w_1<g-1$ and $n\ge 2$, then the RHS is $(\ep:\lub_\cE( \vEc w ) ) \dG$,
and if $\vEc w_1=g-1$ and $n\ge 2$, then the RHS is $(\ep_1,\dots,\ep_{n-2}: 0,g: \vEc v ) \dG$
where  $\ep_k=0$ for all $1\le k\le n-2$.

\end{lemma}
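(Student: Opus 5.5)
The identity reduces, via the recurrence, to tracking a single carry. We use the observation stated just before the lemma, namely $G_1 + \vEc w\cdot G = \lub_\cE(\vEc w)\cdot G$ for $\vEc w\in\cEp$, and transport it to index $n$ by a shift argument.

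\emph{Shift.} For a sequence $G$ satisfying (\ref{eq:recurr}), the shifted sequence $G^{(n)}_k:=G_{k+n-1}$ also satisfies (\ref{eq:recurr}). The block-level argument, and hence the observation, used only the recurrence and never the initial values. Therefore
$$G_n + (\ep:\vEc w)\cdot G = G^{(n)}_1 + \vEc w\cdot G^{(n)} = \lub_\cE(\vEc w)\cdot G^{(n)} = (\ep,\lub_\cE(\vEc w))\cdot G.$$

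\emph{Proof of the observation.} We argue by induction on the number of blocks of $\vEc w=(\vEc b_\ell\mid\cdots\mid\vEc b_1)$.
\begin{itemize}
\item If $\vEc b_1<(h-1,g)$, then adding $1$ to the first entry yields the next semi-proper block in (\ref{eq:spb-order}). There are two subcases: $(c)\mapsto(c+1)$ for $c<g-1$, and $(g-1)\mapsto(0,g)$. In the second subcase, the increase in value is $gG_2 - (g-1)G_1 - \text{(old second entry)}$, and this needs care. Concretely, if $\vEc b_1=(g-1)$ and $\vEc b_2=(c)$ is a one-entry block, then $(g-1,c)\mapsto(0,c+1)$ is the plain lexicographic successor. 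So I track only the entry change $\ep_1\mapsto\ep_1+1$, which is trivially valid, and check that the result lies in $\cE$. The result lies in $\cEp$ unless a $g$ appears at position $1$ or at the second slot of a block, which happens exactly in the case $\vEc w_1=g-1$ with the next block $(c,g)$-compatible.
\item If $\vEc b_1=(h-1,g)$, then $G_1+(h-1)G_1+gG_2=hG_1+gG_2=G_3$. This replaces the block by $(0,0)$ and adds $1$ at index $3$, so the induction hypothesis applies to the shorter tail $(\vEc b_\ell\mid\cdots\mid\vEc b_2)$ with the base shifted by $2$, using the shift remark again. If $\vEc b_1=(g)$ as the leading $\cEm$ block, the analogous computation $G_1+gG_1$ is not needed, since $\vEc w\in\cEp$.
\item The lub characterization follows from Lemma \ref{lem:find-lub} together with Example \ref{exm:lub}. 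The carry description (maximal prefix $(h-1,g:\cdots:h-1,g)$ zeroed, then $+1$ on the next entry) is exactly the smallest element of $\cE$ exceeding $\vEc w$, by Lemma \ref{lem:block-decomp}.
\end{itemize}

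\emph{The two refinements for $n\ge2$.} Write $\vEc w=(\vEc w_1:\vEc v)$.
\begin{itemize}
\item If $\vEc w_1<g-1$, then $\lub_\cE(\vEc w)=(\vEc w_1+1:\vEc v)$ with $\vEc w_1+1\le g-1$, a one-entry block. So $(\ep,\lub_\cE(\vEc w))$ is a concatenation of blocks $(0)$ with an element of $\cEp$, which is the colon form.
\item If $\vEc w_1=g-1$, then $\lub_\cE(\vEc w)=(g:\vEc v)\in\cEm$. Preceded by at least one zero, the entries $0,g$ at indices $n-1,n$ form the block $(0,g)$, which gives $(\ep_1,\dots,\ep_{n-2}:0,g:\vEc v)$.
\end{itemize}

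The main obstacle is bookkeeping: verifying that the lexicographic successor in $\cE$ is exactly "add $1$ at index $1$ with carries through full $(h-1,g)$ blocks", i.e. that no smaller element of $\cE$ is skipped. I would settle this via Lemma \ref{lem:block-decomp}, comparing blocks from the right.
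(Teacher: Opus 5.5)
Your argument is correct and follows essentially the route the paper sketches just before the lemma. You reduce to $n=1$ by shifting the sequence; you add $1$ to the first entry when the leading block is below $(h-1,g)$; you carry through a leading $(h-1,g)$ block via $hG_1+gG_2=G_3$; and you identify the result as $\lub_\cE(\vEc w)$ using Lemma~\ref{lem:find-lub} and the carry description of Example~\ref{exm:lub}. You should delete the passage claiming $(g-1)\mapsto(0,g)$ and that $(g-1,c)\mapsto(0,c+1)$ is the lexicographic successor, because both claims are false: since $g\le B-1$, the successor of $(g-1:\vEc v)$ is $(g:\vEc v)\in\cEm$, which is exactly what your fallback $\ep_1\mapsto\ep_1+1$ and your refinement bullet already use (that fallback also covers the subcase $(c,g)\mapsto(c+1,g)$ that you did not list).
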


\section{Classification of non-unique expansions}\label{sec:classification}

 Throughout this section, let $G$ be a sequence in $ \cH$.
 We completely classify the tuples $\ep\in\cE$, for which there is a tuple $\delta>\ep$ in $\cE$
 with $\eval_G(\ep) = \eval_G(\delta)$, i.e., $\ep \dG = \delta \dG$.

 \subsection{Unique expansions over a subcollection of $\cE$}
 
In Theorem \ref{thm:partial-unique} below, we prove that the restriction  of $\eval_G$ to the
 subcollection of $\cE$, defined in Definition \ref{def:cF0} below, is 
injective.
This observation is instrumental in classifying the pairs $\set{\ep,\delta}\subset\cE$ such that 
$\ep \dG = \delta \dG$.

\begin{theorem}
\label{thm:initial-unique}
Suppose that $gG_1<G_2$.
Let $\set{\ep,\delta}\subset \cE$. Then, $\ep<\delta$ if and only if $\ep\dG < \delta\dG $.

\end{theorem}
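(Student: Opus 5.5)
Because $\cE$ is totally ordered by the \lex\ order, it suffices to prove the forward implication: if $\ep<\delta$, then $\ep\dG<\delta\dG$. Indeed, if $\ep\not<\delta$, then either $\ep=\delta$ or $\delta<\ep$, and in the second case the forward implication gives $\delta\dG<\ep\dG$; so $\ep\dG<\delta\dG$ forces $\ep<\delta$. The forward implication itself reduces to the following key estimate.

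\emph{Key estimate.} For every $m\ge1$ and every $\tau\in\cE$ with $\ord(\tau)\le m$ whose restriction to the indices $1,\dots,m$ is an admissible prefix, we have $\tau\dG<G_{m+1}$.

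Granting the estimate, take the largest index $m$ with $\ep_m\ne\delta_m$; then $\ep_m<\delta_m$. It is cleaner to work at the level of blocks via Lemma \ref{lem:block-decomp}: there is a first block index $p$ with $\mathbf b_p<\mathbf c_p$, and the blocks after it agree. Subtracting the common part reduces the comparison to two tuples that agree above the position of block $p$. Below that position, $\ep$ is an arbitrary $\cE$-prefix, which the estimate bounds strictly by the next term. The block $\mathbf c_p$ exceeds $\mathbf b_p$ by at least $G_j$ at the relevant position $j$, or by $G_{j+1}$ minus some multiple when comparing $(c)$ with $(c',g)$. A careful case check over the ordered list (\ref{eq:spb-order}) should confirm that the gap always dominates what the lower part of $\ep$ can contribute.

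To prove the key estimate I would induct on $m$, using the maximal $\cE$-tuples of order $\le m$. By Lemma \ref{lem:add-Gn} (or by the observation following Example \ref{exm:lub}), the largest tuple of order $\le m$ has the form $(h-1,g:h-1,g:\cdots)$ or $(g:h-1,g:\cdots)$, possibly ending with a block $(g-1)$. Telescoping with $G_{k+2}=gG_{k+1}+hG_k$ shows that $(h-1,g:\cdots:h-1,g)\dG$ equals $G_{m+1}$ minus a small boundary term. Specifically, $hG_k+gG_{k+1}=G_{k+2}$, so $(h-1)G_k+gG_{k+1}=G_{k+2}-G_k$, and iterating leaves $G_{m+1}-G_1$ or a similar remainder. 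The only case where the bound can become tight or fail is when the prefix begins with the single block $(g)$ of $\cEm$, or when the telescoping bottoms out at index $1$ or $2$. For example, $(g)\dG=gG_1$ must be less than $G_2$, and $(h-1,g)$-type sums reduce to needing $gG_1<G_2$ after the final carry. This is exactly where the hypothesis $gG_1<G_2$ enters.

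The main obstacle is this boundary case: tracking the tuples in $\cEm$, whose leading $g$ violates the block structure. I would handle it by showing directly that $(g,\ep_2,\dots)\dG<(0,\ep_2+1,\dots)$-type comparisons hold, using $gG_1<G_2$, and then feeding this into the same induction.
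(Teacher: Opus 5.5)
Your proposal is correct in outline, but it takes a different route from the paper.

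\textbf{The paper's route.} The paper never compares two tuples at their highest differing block. Instead it shows that a single successor step raises the value by an explicit positive amount:
\begin{itemize}
\item $\lub_\cE(\ep)\dG=\ep\dG+G_1$ for $\ep\in\cEp$;
\item $\lub_\cE(\ep)\dG=\ep\dG+(G_2-gG_1)$ for $\ep=(g:\mathbf w)\in\cEm$.
\end{itemize}
Both come from Lemma \ref{lem:add-Gn}. It then reaches any $\delta>\ep$ as $\lub_\cE^n(\ep)$, using the finiteness of lower sets.

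\textbf{Your route.} You run the classical Zeckendorf argument: a bound $\tau\dG<G_{m+1}$ whenever $\ord(\tau)\le m$, followed by a comparison at the first differing block via Lemma \ref{lem:block-decomp}. The case check you defer does work. Suppose the differing blocks end at position $j$.
\begin{itemize}
\item For $(c)<(c')$ and for $(c)<(c',g)$, the gap is at least $G_j-\ep[1,j-1]\dG$.
\item For $(c,g)<(c',g)$, the gap is at least $G_{j-1}-\ep[1,j-2]\dG$.
\end{itemize}
In each case the subtracted part of $\ep$ lies in $\cE$ and has order below the index of the $G$ term, so your key estimate makes the gap positive.

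\textbf{One caution.} Prove the key estimate by peeling off the top block, by induction on $m$:
\begin{itemize}
\item if the top block is $(c)$ at position $m\ge2$, then $\tau\dG<G_m+(g-1)G_m<G_{m+1}$;
\item if it is $(c,g)$ at positions $(m-1,m)$, then $\tau\dG<G_{m-1}+(h-1)G_{m-1}+gG_m=G_{m+1}$.
\end{itemize}
Do not prove it by evaluating the lexicographically largest tuple of order $\le m$. Inferring ``largest value'' from ``largest in the order'' is exactly the monotonicity you are trying to prove. With peeling, the hypothesis $gG_1<G_2$ is used only in the base case $\tau=(g)$.

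\textbf{What each approach buys.} The paper's version gives the exact increments of $\eval_G$ along $\lub_\cE$. After the shift of Lemma \ref{lem:s-function}, these become the edge weights $G_2$ and $hG_1$ of $\Gamma_G$ used throughout Section \ref{sec:classification}. Your version is self-contained: it needs neither Lemma \ref{lem:add-Gn} nor the well-ordering machinery. It also yields the quantitative bound $\tau\dG<G_{\ord(\tau)+1}$ as a by-product.
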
 

\begin{proof}
We first show that  $\ep\dG < \lub_\cE(\ep) \dG$.
Suppose that $\ep\in\cEp$. 
Then, by Lemma \ref{lem:add-Gn}, we have
$ \ep\dG<G_1 + \ep\dG = \lub_\cE(\ep)\dG$.
Next, suppose that 
$$\ep=(g:\vEc w )\in \cEm$$ where $\vEc w \in \cEp$, 
and let $G_0:=(G_2 - gG_1)/h>0$, which may not be an integer.
By Lemma \ref{lem:find-lub} and Example \ref{exm:lub}, we have 
$\lub_\cE (\ep) = (0,\lub_\cE (\vEc w))$, and  Lemma \ref{lem:add-Gn}
yields
\begin{align*} 
\ep\dG <
hG_0 + \ep\dG &= hG_0+ gG_1 +  (0: \vEc w )\dG
=  G_2 +  (0: \vEc w )\dG\\
&=(0,\lub_\cE(\vEc w))\dG
=\lub_\cE(\ep)\dG. 
\end{align*}
Now, suppose that  $\ep<\delta$.
Then, $\delta=\lub_\cE^n (\ep)$ for some $n\in\nat$. 
Applying the inequality above inductively on $n$, we obtain $\ep\dG < \delta\dG$. 

Conversely, suppose that $\ep\dG < \delta \dG$.
Since $\cE$ is totally ordered,
we have either $\ep<\delta$, $\ep=\delta$, or $\delta < \ep$.
If $\ep=\delta$, then $\ep\dG =\delta \dG$, and 
if $\delta < \ep$, then our previous inductive argument implies $\delta \dG < \ep\dG$, which is also a contradiction.
Consequently, we must have $\ep < \delta$.
\end{proof}

We have the following corollary, which proves Theorem \ref{thm:main-1} (1).
\begin{cor} \label{cor:initial-unique}
If $gG_1<G_2$, then  
$G$ has \uniexp.
\end{cor}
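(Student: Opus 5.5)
This is immediate from Theorem \ref{thm:initial-unique}. By Definition \ref{def:expansions-eval}, \uniexp\ means that $\eval_G:\cE\to\nat_0$ is injective, so it suffices to show that distinct tuples in $\cE$ have distinct values.

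Let $\ep,\delta\in\cE$ with $\ep\ne\delta$. The \lex\ order on $\cE$ is a total order, as noted after Corollary \ref{cor:unique-decomp}. So either $\ep<\delta$ or $\delta<\ep$, and by symmetry we may assume $\ep<\delta$.

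Since $gG_1<G_2$, the forward direction of Theorem \ref{thm:initial-unique} gives $\ep\dG<\delta\dG$. In particular $\eval_G(\ep)\ne\eval_G(\delta)$, so $\eval_G$ is injective and $G$ has \uniexp.

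The only points needing care are the totality of the order on $\cE$, which is already established, and the standing hypothesis that $G\in\cH$, which Theorem \ref{thm:initial-unique} uses. Neither step is an obstacle; all the substance lies in Theorem \ref{thm:initial-unique}.
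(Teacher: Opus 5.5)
Your proof is correct and matches the paper, which gives no separate argument and treats the corollary as an immediate consequence of Theorem~\ref{thm:initial-unique}. The totality of the lexicographical order on $\cE$ together with the forward implication of that theorem gives injectivity of $\eval_G$, exactly as you wrote.
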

 
\begin{deF}
\rm \label{def:cF0}
Let $\cEo:=\set{\ep \in\cE :  \ep_1=0}$, and let $\cEbar:=\set{(\ep_1) : \ep \in \cE  }$. 
Both collections contain the zero tuple $\bf 0$, and 
$\cEo \cap \cEbar=\set{\bf 0}$.
\end{deF}

The next theorem establishes the uniqueness of expansions restricted to $\cEo$.
\begin{theorem}
\label{thm:partial-unique}

Let $\set{\ep,\delta}\subset \cEo$. Then, $\ep<\delta$ if and only if $\ep\dG < \delta\dG $.  
In particular, $\ep\dG = \delta\dG $ implies that 
$\ep = \delta$.
 
\end{theorem}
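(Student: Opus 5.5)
Following the pattern of Theorem \ref{thm:initial-unique}, the plan is to show that for every $\ep\in\cEo$ we have $\ep\dG<\lub_{\cEo}(\ep)\dG$. The forward implication then follows by induction along iterates of $\lub_{\cEo}$, and the converse follows from the total order as before. The key observation is that tuples in $\cEo$ begin with $0$. Hence $\ep=(0,\vEc v)$, where either $\vEc v$ is a shifted tuple lying in $\cE$, or $\ep$ begins with the block $(0,g)$ followed by a member of $\cEp$. In effect, we may view $\ep\dG=\vEc v\cdot G'$, where $G'_k:=G_{k+1}$ is the shifted sequence. The shifted sequence satisfies $gG'_1=gG_2$, and we need to compare this with $G'_2=G_3=gG_2+hG_1>gG_2$. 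So $G'$ satisfies the hypothesis $gG'_1<G'_2$ of Theorem \ref{thm:initial-unique}.

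The main step is to check that the map $\ep=(0,\vEc v)\mapsto \vEc v$ is an order-preserving bijection from $\cEo$ onto $\cE$, where $\vEc v=(\ep_2,\ep_3,\dots)$. Once this holds, Theorem \ref{thm:initial-unique} applied to $G'$ gives the result immediately, since $\ep\dG=\vEc v\cdot G'$. Order preservation is clear, because the \lex\ order compares from the top index and the first entries of elements of $\cEo$ agree. For the bijection, we use the block decompositions. If $\ep\in\cEo\cap\cEp$, its last block (reading from the right, i.e. the lowest block) is either $(0)$ or $(0,g)$.
\begin{itemize}
\item If the lowest block is $(0)$, removing it leaves an element of $\cEp$.
\item If the lowest block is $(0,g)$, removing the leading $0$ leaves $(g:\cdots)\in\cEm$.
\end{itemize}
An element of $\cEm$ begins with $g\neq 0$ and so is not in $\cEo$. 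Conversely, we prepend $0$: for $\vEc v\in\cEp$ we get $(0\mid\vEc v)\in\cEp$, and for $\vEc v=(g:\vEc w)\in\cEm$ we get $(0,g:\vEc w)\in\cEp$. So the map is onto $\cE=\cEp\cup\cEm$.

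The expected obstacle is only bookkeeping. We must make sure the unique decomposition (Corollary \ref{cor:unique-decomp}) is used correctly when the first entry $0$ belongs to a block $(0,g)$ rather than to $(0)$. If the shortcut through the shifted sequence is felt to be unjustified, the fallback is to redo the proof of Theorem \ref{thm:initial-unique} directly with $G_2$ playing the role of $G_1$. In that version one adds $G_2$ and uses Lemma \ref{lem:add-Gn} with $n=2$. In the $(0,g:\vEc w)$ case one adds $hG_1=G_3-gG_2>0$ to pass to $(0,0,\lub_\cE(\vEc w))$.
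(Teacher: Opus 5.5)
Your proposal is correct and matches the paper's proof: the paper also passes to the shifted sequence $Q_k=G_{k+1}$, notes that $gQ_1<Q_2$, and applies Theorem \ref{thm:initial-unique}. Your explicit check that $\ep\mapsto(\ep_2,\ep_3,\dots)$ is an order-preserving bijection $\cEo\to\cE$ supplies the step the paper leaves implicit; the paper records this fact later as Lemma \ref{lem:s-function}.
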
 

\begin{proof}
Let $Q$ be the sequence given by $Q_k=G_{k+1}$ for $k\in\nat$.
Since $gQ_1< Q_2$,  the assertion follows directly from Theorem \ref{thm:initial-unique}.
\end{proof}

\begin{cor}
\label{cor:eval}
The restriction of the evaluation map  $\eval_G$ to $\cEo$ is strictly increasing, and hence, injective.
\end{cor}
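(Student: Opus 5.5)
The corollary follows directly from Theorem \ref{thm:partial-unique}, so the plan is only to unpack it. First, check that the restriction $\eval_G|_{\cEo}$ is well defined as a map into $\nat_0$. This is immediate: $\cEo\subset\cE$, and every tuple in $\mathfrak N^\infty$ has finitely many nonzero entries, so $\ep\dG$ is a finite sum of non-negative integers.

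For strict monotonicity, take $\set{\ep,\delta}\subset\cEo$ with $\ep<\delta$ in the \lex\ order. The forward direction of Theorem \ref{thm:partial-unique} gives $\ep\dG<\delta\dG$. Recall how that theorem was obtained: it applies Theorem \ref{thm:initial-unique} to the shifted sequence $Q_k=G_{k+1}$, which satisfies $gQ_1<Q_2$ because $G_3=gG_2+hG_1>gG_2$. This uses $h\ge 1$ and $G_1\ge 1$. Note that dropping the leading zero entry identifies $\cEo$ with a subcollection of $\cE$. This identification is the one point worth checking, and it holds in both cases:
\begin{itemize}
\item If $\ep=(0:\vEc w)\in\cEp$, then $\vEc w\in\cEp$, since the leading $(0)$ is a \spb.
\item If $\ep=(0,g:\vEc w)$, then dropping the zero leaves $(g:\vEc w)\in\cEm$.
\end{itemize}
The identification also preserves both the \lex\ order and evaluation, in the sense that $\ep\dG=(\ep_2,\ep_3,\dots)\cdot Q$.

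Injectivity then follows from strict monotonicity together with the fact that $\cE$, and hence $\cEo$, is totally ordered. If $\ep\ne\delta$, we may assume $\ep<\delta$, and then $\ep\dG<\delta\dG$. Equivalently, this is the final clause of Theorem \ref{thm:partial-unique}. No step presents a real obstacle.
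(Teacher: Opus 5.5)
Your proposal is correct and takes the paper's route: the corollary restates Theorem \ref{thm:partial-unique}, which the paper proves by applying Theorem \ref{thm:initial-unique} to the shifted sequence $Q_k=G_{k+1}$, and injectivity follows from strict monotonicity on the totally ordered set $\cEo$. Your extra check that deleting the leading zero sends $\cEo$ into $\cE$, preserving order and value, is left implicit in the paper (it is essentially Lemma \ref{lem:s-function}), and your verification of it is correct.
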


\subsection{Non-unique expansions}\label{sec:non-uniques}

We classify non-unique $\cE$-expansions 
in terms of the first few entries of tuples in $\cE$. 
\begin{deF}
\rm \label{def:tail-body}
Let $\ep\in\nat_0^\infty$. 
We define  $\ep^\circ $ to be the tuple in $\nat_0^\infty$
such that $\ep^\circ_1 =0$
and $\ep^\circ_k=\ep_k$ for all $k\ge 2$, and 
$\baR \ep:=(\ep_1)$, e.g., if $\ep=(2,3,4,5)$,
then $\ep^\circ = (0,3,4,5)$ and $\baR \ep =(2)$.
\end{deF}
Observe that  
if $\ep\in \cE$, then  $\ep^\circ\in \cEo$. 

\begin{lemma}\label{lem:split}
Let   $\ep < \delta$ in $\cE$ and
  $\ep \dG = \delta \dG$.
Then, $\ep^\circ < \delta^\circ$ in $\cE$, and 
\begin{equation}  
\baR\ep \dG - \baR\delta \dG
= \delta^\circ \dG - \ep^\circ \dG>0.
\label{eq:body-tail}
\end{equation} 
\end{lemma}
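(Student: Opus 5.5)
We decompose each tuple into its first entry and the rest, and compare the two pieces using Corollary \ref{cor:eval}. For any $\ep\in\nat_0^\infty$ we have $\ep = \ep^\circ + \baR\ep$ entrywise, where $\baR\ep$ is identified with $(\ep_1,0,0,\dots)$. Hence $\ep\dG = \baR\ep\dG + \ep^\circ\dG$, and likewise for $\delta$. The hypothesis $\ep\dG=\delta\dG$ then rearranges immediately to the equality in \eqref{eq:body-tail}:
\[
\baR\ep\dG - \baR\delta\dG = \delta^\circ\dG - \ep^\circ\dG .
\]
It remains to show that $\ep^\circ<\delta^\circ$ and that this common quantity is positive.

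We first record that $\ep^\circ$ and $\delta^\circ$ lie in $\cEo$, as observed just after Definition \ref{def:tail-body}. This should be checked briefly. If $\ep\in\cEp$, then $\ep_1$ is either a single block $(c)$, which we replace by $(0)$, or the first entry of a block $(c,g)$, which we replace by $(0,g)$. If $\ep\in\cEm$, then $\ep=(g:\vEc w)$ and $\ep^\circ=(0:\vEc w)\in\cEp$. In every case $\ep^\circ\in\cEo\subset\cE$.

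Next we compare $\ep^\circ$ and $\delta^\circ$ lexicographically. Since $\ep<\delta$, there is an index $m$ with $\ep_m<\delta_m$ and $\ep_k=\delta_k$ for all $k>m$. We split into two cases.
\begin{itemize}
\item If $m\ge2$, the same index $m$ witnesses $\ep^\circ<\delta^\circ$, because $\ep^\circ$ and $\delta^\circ$ agree with $\ep$ and $\delta$ in all entries of index at least $2$.
\item If $m=1$, then $\ep_k=\delta_k$ for all $k\ge2$. Hence $\ep^\circ=\delta^\circ$, and $\ep\dG=\delta\dG$ forces $\ep_1G_1=\delta_1G_1$. Since $G_1>0$, this gives $\ep_1=\delta_1$, so $\ep=\delta$, contradicting $\ep<\delta$.
\end{itemize}
Thus the case $m=1$ is impossible, and $\ep^\circ<\delta^\circ$ in $\cEo$.

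Finally, Theorem \ref{thm:partial-unique} (equivalently Corollary \ref{cor:eval}) applied to $\ep^\circ<\delta^\circ$ in $\cEo$ gives $\ep^\circ\dG<\delta^\circ\dG$. So the right-hand side of \eqref{eq:body-tail} is positive, and the displayed equality transfers this positivity to the left-hand side. The only point needing care is the case analysis showing $m\ne1$. The rest is bookkeeping, since the heavy lifting was already done in Theorem \ref{thm:partial-unique}.
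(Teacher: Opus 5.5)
Your proof is correct and follows essentially the same route as the paper. The equality is immediate from $\ep\dG=\delta\dG$, the lexicographic definition gives $\ep^\circ\le\delta^\circ$ with equality excluded because it would force $\ep=\delta$, and Theorem~\ref{thm:partial-unique} then gives positivity; your explicit check that $\ep^\circ,\delta^\circ\in\cEo$ and your case split on the witnessing index $m$ just spell out details the paper leaves implicit.
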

\begin{proof}
The equality part of (\ref{eq:body-tail}) follows immediately from $\ep \dG = \delta \dG$. 
The definition of $\ep<\delta$ implies that $\ep^\circ \le \delta^\circ$.
If $\ep^\circ = \delta^\circ $, then (\ref{eq:body-tail})  implies $\baR \ep= \baR \delta$, which yields a contradiction $\ep=\delta$.  Thus, we must have  $\ep^\circ < \delta^\circ$.
  By 
Theorem \ref{thm:partial-unique}, we obtain
$\delta^\circ \dG - \ep^\circ \dG > 0$, which completes  the proof.
\end{proof}

Proposition \ref{prop:cFo-diagram} below plays a crucial role
in understanding the  relationship between 
$\delta^\circ$ and $\ep^\circ$ in (\ref{eq:body-tail}).
The diagram in the proposition is generated by the sequence $\lub_{\cEo}^n(\bf 0)$ for $n\in\nat_0$, which is defined in  Definition \ref{def:imm-succ}.
\begin{notation}
\rm \label{def:imm-succ-o}
 
If $\ep\in\cEo$, then we denote $\lub_{\cEo}(\ep)$ simply by $\wt \ep$.
\end{notation}

The proof of the next lemma is analogous to that of Lemma \ref{lem:find-lub}, and is left to the reader.
\begin{lemma}\label{lem:find-lub-o} 
Let $\ep\in\cEo$, and let $n$ be the smallest integer in $\nat$ such that $ \lub_{\cE}^n(\ep)\in\cEo$.
Then, $\wt \ep = \lub_{\cE}^n(\ep)$.
\end{lemma}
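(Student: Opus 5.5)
We follow the proof of Lemma \ref{lem:find-lub}, with $\cE$ now playing the role of $\NNB$ and $\cEo$ the role of $\cT$. Fix $\ep\in\cEo$. Since $\cEo\subset\cE\subset\NNB$, Corollary \ref{thm:finiteness} says that the lower set of $\ep$ in $\NNB$ is finite, and hence so is its lower set in $\cE$. The collection $\cEo$ is infinite, because it contains $(0,\dots,0,1)$ of every length. So there is some $\tau\in\cEo$ with $\ep<\tau$.

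The first step is to show that $\tau=\lub_\cE^m(\ep)$ for some $m\in\nat$. The set $\set{\sigma\in\cE:\ep<\sigma\le\tau}$ is finite, since it sits inside the lower set of $\tau$. The iterates $\lub_\cE^k(\ep)$ are strictly increasing and every one of them lies in $\cE$. Moreover, $\lub_\cE$ moves to the next element of $\cE$, so no element of $\cE$ lies strictly between $\lub_\cE^k(\ep)$ and $\lub_\cE^{k+1}(\ep)$. These iterates therefore run through $\set{\sigma\in\cE:\ep<\sigma\le\tau}$ in increasing order and reach $\tau$ after finitely many steps. Consequently the set of $k\in\nat$ with $\lub_\cE^k(\ep)\in\cEo$ is non-empty, and the smallest such integer $n$ in the statement exists.

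Next we show that $\lub_\cE^n(\ep)$ is the least element of $U:=\set{\delta\in\cEo:\ep<\delta}$. Write $\tau:=\lub_\cE^n(\ep)$, so $\tau\in U$. Take any $\delta\in U$. By the previous paragraph, applied with $\delta$ in place of $\tau$, we have $\delta=\lub_\cE^m(\ep)$ for some $m\in\nat$. Minimality of $n$ gives $n\le m$. Since $\lub_\cE$ is strictly increasing along iterates, $\tau\le\delta$. Thus $\tau$ is the smallest element of $U$, which by Definition \ref{def:imm-succ} and Notation \ref{def:imm-succ-o} means $\tau=\wt\ep$.

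The only point needing care is the claim that the iterates of $\lub_\cE$ starting from $\ep$ exhaust every element of $\cE$ above $\ep$. This is where the finiteness of lower sets (Corollary \ref{thm:finiteness}) is used, together with the fact that $\lub_\cE$ is defined and strictly increasing on the infinite collection $\cE$. Everything else is formal and identical to the proof of Lemma \ref{lem:find-lub}.
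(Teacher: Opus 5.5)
Your proof is correct and is the argument the paper intends: the paper leaves this lemma to the reader as "analogous to Lemma~\ref{lem:find-lub}," and you carry out that transplant with $\cE$ in place of $\NNB$ and $\cEo$ in place of $\cT$. Your explicit check that the iterates of $\lub_{\cE}$ exhaust $\set{\sigma\in\cE:\ep<\sigma}$, using finiteness of lower sets, fills in a step that the paper's proof of Lemma~\ref{lem:find-lub} attributes only to total ordering.
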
  

The next lemma is useful for computing $\wt \ep$.
The bijectivity of $s$ follows from the existence of its inverse map, given by 
$(\ep_1,\ep_2,\dots)\mapsto (0,\ep_1,\ep_2,\dots)$; we omit the details of the proof. 
\begin{lemma}\label{lem:s-function}
Let $s : \cEo \to \cE$ be the function given by 
$\ep \mapsto (\ep_2,\ep_3,\dots) $, e.g.,
 $\bf 0\mapsto \bf 0$.
Then, $s$ is a strictly increasing bijection, which implies that for any $\ep\in\cEo$, 
$$s\big( \lub_{\cEo}(\ep) )= \lub_{\cE}(s(\ep)).$$
In particular, we have $\wt \ep = s\Inv\big( \lub_{\cE}(s(\ep)) \big)$.
\end{lemma}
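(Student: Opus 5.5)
We prove the two claims of Lemma \ref{lem:s-function} separately: first that $s$ is a well-defined, strictly increasing bijection, and then that it intertwines the least-upper-bound maps.

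\textbf{Step 1: $s$ maps $\cEo$ into $\cE$.} Take $\ep\in\cEo$, so $\ep_1=0$. We split according to whether $\ep\in\cEp$ or $\ep\in\cEm$.

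If $\ep\in\cEm$, then $\ep_1=g\ge 1$, which is impossible. Hence $\ep\in\cEp$.

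Write the \spb\ decomposition of $\ep$ as in (\ref{eq:block-decomp}). By Corollary \ref{cor:unique-decomp} this decomposition is unique, and its lowest block $\mathbf b_1$ starts with the entry $\ep_1=0$. This leaves two cases.
\begin{enumerate}
\item[(a)] $\mathbf b_1=(0)$. Deleting this block leaves a concatenation of \spb s, so $s(\ep)\in\cEp$.
\item[(b)] $\mathbf b_1=(0,g)$. Deleting the leading $0$ leaves $(g:\mathbf b_\ell,\dots,\mathbf b_2)$, which lies in $\cEm$ by definition.
\end{enumerate}
In both cases $s(\ep)\in\cE$.

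\textbf{Step 2: $s$ is a bijection.} Define $t(\delta)=(0,\delta_1,\delta_2,\dots)$ for $\delta\in\cE$. We reverse the two cases of Step 1.
\begin{enumerate}
\item[(a)] If $\delta\in\cEp$, then prepending the block $(0)$ gives $t(\delta)\in\cEp$.
\item[(b)] If $\delta=(g:\vEc w)\in\cEm$ with $\vEc w\in\cEp$, then $(0,g)$ is a \spb, so $t(\delta)=(0,g:\vEc w)\in\cEp$.
\end{enumerate}
In both cases the first entry of $t(\delta)$ is $0$, so $t(\delta)\in\cEo$. Clearly $s\circ t$ and $t\circ s$ are identities, so $s$ is a bijection with inverse $t$.

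\textbf{Step 3: $s$ is strictly increasing.} By Definition \ref{def:lex-order}, the lexicographic comparison of $\ep$ and $\delta$ is decided at the highest index where they differ. Deleting a common first entry $0$ shifts every index down by one but does not change which entries differ or how they compare. Hence $\ep<\delta$ if and only if $s(\ep)<s(\delta)$.

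\textbf{Step 4: the identity for least upper bounds.} A strictly increasing bijection between totally ordered sets is an order isomorphism. It therefore carries the smallest element of $\set{\delta\in\cEo:\ep<\delta}$ to the smallest element of $\set{\tau\in\cE: s(\ep)<\tau}$. By Definition \ref{def:imm-succ}, this says exactly that
$$s\big(\lub_{\cEo}(\ep)\big)=\lub_{\cE}\big(s(\ep)\big).$$

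Both upper sets are nonempty because both collections are infinite. Applying $s\Inv$ to both sides gives the formula for $\wt\ep$.

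The only step requiring care is Step 1. Its membership claim rests on the uniqueness of the block decomposition, which guarantees that a leading $0$ starts either the block $(0)$ or the block $(0,g)$ and nothing else.
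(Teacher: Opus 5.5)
Your proof is correct and follows the paper's route: the paper also gets bijectivity from the explicit inverse $(\ep_1,\ep_2,\dots)\mapsto(0,\ep_1,\ep_2,\dots)$ and the $\lub$ identity from $s$ being a strictly increasing bijection, leaving to the reader exactly the details you supply (membership via the blocks $(0)$ and $(0,g)$, and the index shift for monotonicity). Two small points: uniqueness of the block decomposition is not actually needed in Step 1, since any decomposition has a first block starting with $0$ and hence equal to $(0)$ or $(0,g)$; and your block labels run opposite to the paper's convention in (\ref{eq:block-decomp}), where $\mathbf b_\ell$ (not $\mathbf b_1$) is the block containing $\ep_1$, which also makes your ordering $(g:\mathbf b_\ell,\dots,\mathbf b_2)$ internally inconsistent.
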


The following example demonstrates how Lemma \ref{lem:s-function} can be applied to find $\wt \ep$.

\begin{example}\rm \label{exm:lub-o}
Let $(g,h)=(2,1)$.
Let $\ep = (0,2,0,2,0,2,0,1,1)\in\cEo$.  Then, 
$ s(\ep)= (2,0,2,0,2,0,1,1)$.
From Example \ref{exm:lub}, we know 
$$\lub_\cE(s(\ep)) = (0,0,0,0,0,1,1,1).$$ 
Applying Lemma \ref{lem:s-function}, we obtain 
$$\wt \ep = s\Inv(0,0,0,0,0,1,1,1) = (0,0,0,0,0,0,1,1,1).$$

 \end{example}

\begin{deF}
\rm \label{def:ep[a,b]}
Let $\ep\in\nat^\infty$.
We define $\ep[k,\ell]:=(\ep_k,\dots,\ep_\ell)$ where $k$ and $\ell$ are integers, and 
let $\ep[k,\infty)$ denote the infinite tuple $(\ep_k,\ep_{k+1},\dots)$. 
\end{deF}
To understand the relationship between $\delta^\circ$ and $\ep^\circ$
when $\delta \dG=\ep \dG$, we consider
the  diagrams in Figures \ref{fig:g>=h}, \ref{fig:g=h-1}, and \ref{fig:g<h-1}, which represent directed weighted graphs.
The vertices correspond to the possible pairs of $\ep[2,3]$, and the transitions are formalized in Proposition \ref{prop:cFo-diagram} below.
Proposition \ref{prop:cFo-diagram} can be verified using Lemma \ref{lem:split} and Lemma \ref{lem:find-lub-o}; we omit the details. 
\begin{prop}\label{prop:cFo-diagram}
Let $\ep\in \cEo$, and let $\vEc w$ represent a tuple in $\cEp$.
Then, in Figures \ref{fig:g>=h},
\ref{fig:g=h-1}, and \ref{fig:g<h-1},   the possibilities of\ \ $\wt \ep\,[2,3]$ are represented as directed edges originating at  $\ep[2,3]$.
The resulting successor satisfies
$\wt \ep\dG = t + \ep \dG$ where $t=G_2$ if the edge has no label, and $t$ equals the edge weight otherwise. 

\begin{enumerate}
\item
Case $g\ge h$:\quad
Suppose that $\ep[2,3]=(g,h-1)$.
Then,
 $\ep =(0,g:h-1,g:\vEc w)$ if and only if\ \  $\wt \ep\, [2,3]=(0,0)$. 
 \begin{figure}[h] 
\begin{center}    \footnotesize
 \begin{tikzcd}
   \cdots \ar{r} &  (x,0)  \ar{r}  & \cdots  \ar{r}  & (g,0)  \ar{r}{hG_1}  &  (0,1) \ar d\\
     (1,0) \ar{u} &     &    &    &  \vdots \ar d\\
    (0,0)\ar{u} &    &  &  &  (g,h-1) \ar{d}{hG_1} \arrow[swap]{llll}{hG_1}\\
        (h-1,g)  \ar{u}& \dots \ar{l} & (0,g) \ar{l} & \cdots \ar{l}{hG_1} &  (0,h)  \ar{l}\\
\end{tikzcd} 
\end{center} 
\caption{
Case: $g\ge h$ }\label{fig:g>=h}
\end{figure} 

 \item
Case  $g=h-1$: 
Suppose that $\ep[2,3]=(h-1,h-1)$.
Then,
 $t=G_2$ if and only if $\ep=(0:h-1,h-1:\vEc w)$.
 \begin{figure}[h]
\begin{center}  \footnotesize
 \begin{tikzcd}
   \cdots \ar{r} &  (x,0)  \ar{r}  & \cdots  \ar{r}  & (h-1,0)  \ar{r}{hG_1}  &  (0,1) \ar d\\
     (1,0) \ar{u} &     &    &    &  \vdots \ar d\\
    (0,0)\ar{u} &    &  &  &  (h-1,h-1) \arrow[swap]{llll}{G_2\text{ or } hG_1}
\end{tikzcd}. 
\end{center} 
\caption{
Case: $g=h-1$  }\label{fig:g=h-1}
\end{figure} 

 \item
Case $g<h-1$: 
Suppose that $\ep[2,3]=(g,g)$.
Then,
$\wt \ep\, [2,3]=(0,g+1)$ if and only if    $\ep=(0,g:g,g:\vEc w)$.

\begin{figure}[h]
\begin{center}  \footnotesize
 \begin{tikzcd}
   \cdots \ar{r} &  (x,0)  \ar{r}  & \cdots  \ar{r}  &( g,0)  \ar{r}{hG_1}  &  (0,1) \ar d\\
      \vdots  \ar{u} &     &    &    &  \vdots \ar d\\
       (1,0) \ar{u} &   (h-1,g)\ar[swap]{dl}    & \cdots\ar{l}   & (g+1,g) \arrow[swap]{l}  &  (g,g)\arrow[swap]{d}{hG_1} \ar{l} \ar d\\
               (0,0) \ar{u} & (g,h-1) \ar{l}{hG_1}  
                &  \cdots\ar{l}  & (1,g+1)\ar{l}  &  (0,g+1) \ar{l}
\end{tikzcd} 
\end{center} 
\caption{Case: $g<h-1$ }\label{fig:g<h-1}
\end{figure} 
\end{enumerate}

\end{prop}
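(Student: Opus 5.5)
Via Lemma \ref{lem:s-function} the computation of $\wt\ep$ moves into $\cE$: with $\vEc v := s(\ep) = (\ep_2,\ep_3,\dots)\in\cE$ we have $\wt\ep = s\Inv(\lub_\cE(\vEc v))$. Thus $\wt\ep[2,3]$ is the first two entries of $\lub_\cE(\vEc v)$. The increment $t=\wt\ep\dG-\ep\dG$ is the difference between evaluating $\lub_\cE(\vEc v)$ and $\vEc v$ against the shifted sequence $Q_k=G_{k+1}$. This $Q$ satisfies the recurrence with $gQ_1<Q_2$, so Lemma \ref{lem:add-Gn} and the proof of Theorem \ref{thm:initial-unique} apply to it verbatim. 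The plan is to split according to the leading semi-proper block of $\vEc v$, i.e.\ according to $\vEc v_1=\ep_2$ and, where relevant, $\vEc v_2=\ep_3$.

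\textbf{Case $\vEc v\in\cEp$.} Lemma \ref{lem:add-Gn} (for $Q$, $n=1$) gives $\lub_\cE(\vEc v)\cdot Q = Q_1+\vEc v\cdot Q$, hence $t=G_2$, which is the unlabelled edge. To read off the new pair $\wt\ep[2,3]$ I use the carry description from Example \ref{exm:lub}:
\begin{itemize}
\item if $\vEc v_1<g-1$, or $\vEc v$ begins with a block $(c,g)$ with $c<h-1$, only the first entry is incremented. This gives the horizontal moves $(x,0)\to(x+1,0)$ and the moves along the $(\cdot,g)$ row.
\item if the longest prefix of the form $(h-1,g:\dots:h-1,g)$ is nonempty, it is zeroed and a carry of $1$ passes upward. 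This produces $(h-1,g)\to(0,0)$, or $(h-1,g)\to(0,1)$ etc.\ when the prefix is a single block.
\item if $\vEc v_1=g-1$ and the next entry permits it, the last clause of Lemma \ref{lem:add-Gn} yields $(0,g)$.
\end{itemize}

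\textbf{Case $\vEc v\in\cEm$,} i.e.\ $\vEc v=(g:\vEc w)$. As in the proof of Theorem \ref{thm:initial-unique}, $\lub_\cE(\vEc v)=(0,\lub_\cE(\vEc w))$. The increment equals $hQ_0$ with $Q_0=(Q_2-gQ_1)/h=G_1$, so $t=hG_1$; these are the labelled edges. The new pair is $(0,\lub_\cE(\vEc w)_1)$, which I evaluate by applying the previous case to $\vEc w$. This yields $(g,x)\to(0,x+1)$ and, when $\vEc w$ starts with a maximal carry chain, $(g,h-1)\to(0,0)$.

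Each entry of the claimed figures is then checked for $g\ge h$, $g=h-1$ and $g<h-1$ separately. The regimes differ in where $(g,\cdot)$ and $(\cdot,g)$ sit relative to the bound $h-1$, i.e.\ in whether $g\le h-1$ makes $(g,g)$ a semi-proper block. The stated ``iff'' refinements ($t=G_2$ versus $hG_1$ at $(h-1,h-1)$ when $g=h-1$; $(0,0)$ versus $(0,h)$ at $(g,h-1)$ when $g\ge h$; $(0,g+1)$ at $(g,g)$ when $g<h-1$) all arise in the same way. The pair $\ep[2,3]$ is ambiguous between a block decomposition starting with $(\ep_2)\mid$ and one whose leading $g$ is the $\cEm$-head. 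The two decompositions give the two different outgoing edges, and I will distinguish them via the colon notation exactly as in the statement.

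I expect the main obstacle to be this ambiguity bookkeeping. A pair such as $(g,h-1)$ can be read as $\ep=(0,g:h-1,g:\dots)$, or as $(0,g,h-1:\dots)$ with $(h-1)$ ending a block $(h-1,g)$ whose $g$ lies beyond index $3$, and so on. Each reading forces a different carry length and a different target. I would handle it by enumerating, for each vertex, the finitely many possible block decompositions of $\ep[2,4]$ and confirming that every one of them produces an edge drawn in the figure with the correct weight.
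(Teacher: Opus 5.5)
Your approach is sound, and it organizes the computation differently from the route the paper indicates. The paper omits the proof and points to Lemma \ref{lem:find-lub-o}, which computes $\wt\ep$ by iterating $\lub_\cE$ on $\ep$ itself until the first entry returns to $0$:
\begin{itemize}
\item from $(0:\vEc v)$ one passes through $(1:\vEc v),\dots,(g:\vEc v)$ to $(0,\lub_\cE(\vEc v))$, accumulating $gG_1+(G_2-gG_1)=G_2$;
\item from $(0,g:\vEc w)$ one passes through $(1,g:\vEc w),\dots,(h-1,g:\vEc w)$ to $(0,0,\lub_\cE(\vEc w))$, accumulating $(h-1)G_1+G_1=hG_1$.
\end{itemize}
Shifting first with Lemma \ref{lem:s-function} replaces these $g+1$ or $h$ steps by a single $\lub_\cE$ step on $s(\ep)$. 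The weight dichotomy then drops out of Lemma \ref{lem:add-Gn} for $Q$: $t=Q_1=G_2$ exactly when $\ep$ begins with the block $(0)$, and $t=Q_2-gQ_1=hG_1$ exactly when it begins with $(0,g)$. Both routes use the same carry facts, including $\lub_\cE(g:\vEc w)=(0,\lub_\cE(\vEc w))$. Yours makes the structure of the edge weights more transparent; the paper's stays with $G$ throughout.

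Three slips need fixing before you carry out the check. All come from losing track of the fact that the new vertex is $\wt\ep[2,3]=\lub_\cE(\vEc v)[1,2]$.
\begin{itemize}
\item There is no edge $(h-1,g)\to(0,1)$. If $\vEc v$ begins with $(h-1,g)$, then $\lub_\cE(\vEc v)[1,2]=(0,0)$ however far the carry runs; the carry only changes $\wt\ep_4$ and beyond.
\item In your third bullet, $(0,g)$ is the new leading block $\wt\ep[1,2]$, not the new vertex. The edge is $(g-1,y)\to(g,y)$. No condition on the next entry is needed, since $(g:\vEc w')\in\cEm$ always.
\item At $(g,h-1)$ with $g\ge h$, the ambiguity is not the $(0)$ versus $(0,g)$ one. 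Since $(g,g)$ is not a block, $\ep$ must begin with $(0,g)$, so $t=hG_1$ in every case. The real dichotomy is whether $\ep_3=h-1$ forms the block $(h-1)$, giving $(0,h)$, or opens the block $(h-1,g)$, giving $(0,0)$. The two readings you wrote for this vertex are not these two.
\end{itemize}
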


 \begin{example} \label{exm:diagram-(2,1)}
 \rm
Let $(g,h)=(2,1)$.  Then, the graph in Figure \ref{fig:g>=h}
is as follows:
\begin{center} 
 \begin{tikzcd}
   (1,0)  \ar{rr} &      &  (2,0) \ar{d}{G_1} \ar{dll}{G_1}  \\ 
    (0,0)\ar{u} &    & (0,1) \ar{d} \\
       (0,2)  \ar{u}& (2,1) \ar{l}{G_1}  &  (1,1) \ar{l} \\
\end{tikzcd}. 
\end{center} 
Consider the case $\ep[2,3]=(2,0)$. 
Then, $\wt \ep\, [2,3]=(0,0)$ if and only if $\ep=(0,2,0,2:\vEc w)$ where $\vEc w\in \cEp$, e.g.,
  $\ep=(0,2\mid 0,2\mid 1)\implies \wt \ep=(0\mid 0\mid 0\mid 0,2)$.
\end{example}

\begin{deF}\rm \label{def:graph}
The directed weighted graph defined by the \theFigures\  
is denoted by $\Gamma_G$.
Let $\Path(\Gamma_G)$ denote the set of finite directed paths in $\Gamma_G$.
Given a path $\gamma\in \Path(\Gamma_G)$, let $\len(\gamma)\ge 0$ denote the number  of edges
in $\gamma$, and let $\wgt(\gamma)$ denote the  sum of the weights along $\gamma$.

\end{deF}
If $\len(\gamma)>0$ for $\gamma\in\thepaths$, then $\wgt(\gamma)$ belongs to the set  $\fB$ defined below:
\begin{equation}
\begin{aligned}
\fB &:= \set{mG_2 + h\ell G_1: m,\ell\in\nat_0,\ m+\ell>0},\\
\quad
\fT &:=\set{k G_1 : 1\le k\le \max\set{g,h-1}} .
\end{aligned} 
\label{eq:diff-values}
\end{equation} 
In Theorem \ref{thm:initial-values} below, 
we   relate the expressions of the elements of $\fB$ to  the RHS of (\ref{eq:body-tail}).
The expressions of the values of  the LHS of (\ref{eq:body-tail}) are listed in $\fT$ in
(\ref{eq:diff-values}) and also in Lemma \ref{lem:tail-bounds} below.
 We leave the proof of the   lemma to the reader.
\begin{lemma}
 \label{lem:tail-bounds}
If $\set{\ep,\delta}\subset \cEbar$ and $\ep\ne\delta$,
 then 
 $\abs{  \ep\dG - \delta\dG }\in \fT$,
  and in particular, we have
 $ \abs{ \ep\dG - \delta\dG } \le  G_1\max\set{g,h-1}$.
\end{lemma}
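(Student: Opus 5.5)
Since $\ep,\delta\in\cEbar$, each is a one-entry tuple, $\ep=(a)$ and $\delta=(b)$, and so $\ep\dG-\delta\dG=(a-b)G_1$. The plan is to determine exactly which values the single entry $\ep_1$ of a tuple $\ep\in\cE$ can take, and then bound $\abs{a-b}$.

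First I would read off the possible first entries from Definition \ref{def:Z-collection}. If $\ep\in\cEm$, then $\ep_1=g$. If $\ep\in\cEp$, write its \spb\ decomposition $\ep=(\mathbf b_\ell\mid\cdots\mid\mathbf b_1)$ and use Corollary \ref{cor:unique-decomp}. The first entry $\ep_1$ is the first entry of $\mathbf b_1$, so it lies in $\{0,\dots,g-1\}$ when $\mathbf b_1=(c)$ and in $\{0,\dots,h-1\}$ when $\mathbf b_1=(c,g)$. Hence
\[
\set{\ep_1:\ep\in\cE}\subseteq\{0,1,\dots,\max\set{g,h-1}\},
\]
because $g\le\max\set{g,h-1}$ and $h-1\le\max\set{g,h-1}$.

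Next, take two distinct tuples $(a)\ne(b)$ in $\cEbar$. Both $a$ and $b$ lie in $\{0,\dots,\max\set{g,h-1}\}$ and $a\ne b$, so $1\le\abs{a-b}\le\max\set{g,h-1}$. Therefore $\abs{\ep\dG-\delta\dG}=\abs{a-b}\,G_1$ belongs to $\fT$ as defined in (\ref{eq:diff-values}). The bound $\abs{\ep\dG-\delta\dG}\le G_1\max\set{g,h-1}$ then follows at once, since $G_1>0$.

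There is no real obstacle here. The only point that needs care is the first step: making sure that every possible first entry is covered, including the entry $g$ coming from $\cEm$ and the blocks $(c,g)$ with $c\le h-1$. Those two cases are exactly why the bound is $\max\set{g,h-1}$ rather than $g-1$.
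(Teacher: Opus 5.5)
Your proof is correct and is exactly the routine verification the paper intends; the paper leaves this lemma to the reader. The key observation is that first entries of tuples in $\cE$ lie in $\{0,\dots,g-1\}\cup\{0,\dots,h-1\}$ (from the semi-proper blocks) or equal $g$ (from $\cEm$), so two distinct one-entry tuples differ in value by $kG_1$ with $1\le k\le\max\set{g,h-1}$. Your appeal to Corollary \ref{cor:unique-decomp} is harmless but unnecessary, since any block decomposition already exhibits $\ep_1$ as the first entry of a semi-proper block.
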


\begin{theorem}
\label{thm:initial-values}  
Let  $\ep< \delta$ in $\cE$, and let
  $\ep^*:=\ep[2,3]$  and $\delta^*:=\delta[2,3]$.
 Then, $ \ep\dG=  \delta\dG $ if and only if all of the following are satisfied:
 \begin{enumerate}
 \item  $\lub^n_{\cEo}(\ep^\circ) = \delta^\circ$ for some $n\in\nat_0$;
 \item 
There is   $\gamma\in\thepaths$ from
$\ep^* $ to $\delta^* $ such that $n=\len(\gamma)>0$;

\item 
Let $w:=\wgt(\gamma)$.
Then, $w=mG_2+h\ell G_1\in \fB$
and
$w = \delta^\circ\dG -  \ep^\circ\dG$
where  $m$ is the 
number of edges with weight $G_2$,   $\ell$ is the number of edges with weight $hG_1$, and $m=G_1p$ where $p\in\nat$;

\item \label{thm:g=f} The tail difference  $d := \bar \ep\dG - \ \bar \delta\dG$ satisfies  $d \in \fT$ 
  and 
$d=w$.
\end{enumerate} 
  
 \end{theorem}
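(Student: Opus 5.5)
The plan is to prove the two directions separately, using Lemma \ref{lem:split} to reduce everything to the pair $(\ep^\circ,\delta^\circ)$ inside $\cEo$, and Proposition \ref{prop:cFo-diagram} to read off the increments along the chain of successors.

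\emph{Necessity.} Assume $\ep\dG=\delta\dG$. By Lemma \ref{lem:split}, $\ep^\circ<\delta^\circ$ in $\cEo$ and
$\baR\ep\dG-\baR\delta\dG=\delta^\circ\dG-\ep^\circ\dG>0$.
Condition (1) follows from the finiteness of lower sets (Corollary \ref{thm:finiteness}). Since $\cEo$ is totally ordered, and by Definition \ref{def:imm-succ}, $\delta^\circ=\lub^n_{\cEo}(\ep^\circ)$ for some $n\ge 1$.

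Condition (2) comes from iterating Proposition \ref{prop:cFo-diagram}. For $0\le j<n$, the pair $\lub^{j}_{\cEo}(\ep^\circ)[2,3]$ is joined to $\lub^{j+1}_{\cEo}(\ep^\circ)[2,3]$ by an edge of $\Gamma_G$. Concatenating these $n$ edges gives a path $\gamma$ from $\ep^*$ to $\delta^*$ with $\len(\gamma)=n>0$. Here we use $(\ep^\circ)[2,3]=\ep[2,3]$.

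Condition (3) comes from summing the relations $\wt\tau\dG=t+\tau\dG$ along the chain, which telescopes to
$\delta^\circ\dG-\ep^\circ\dG=\wgt(\gamma)=mG_2+h\ell G_1$.
Every edge weight is either $G_2$ or $hG_1$, so $w\in\fB$.

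Condition (4) follows because $d=w$ and $d$ is a difference of distinct elements of $\cEbar$ (they differ since $d>0$). Lemma \ref{lem:tail-bounds} then gives $d\in\fT$.

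The divisibility $m=G_1p$ is the delicate point. Since $d\in\fT$, we have $d=kG_1$, so
$mG_2=kG_1-h\ell G_1\equiv 0\pmod{G_1}$.
Because $\gcd(G_1,G_2)=1$ (definition of $\cH$), it follows that $G_1\mid m$. The case $m=0$ would give $p=0\notin\nat$. So either the statement intends $p\in\nat_0$, or I must rule out $m=0$; I would check which reading is intended. If $p\ge1$ is really required, I expect to argue from the graph structure that a path with positive weight equal to $d\le G_1\max\set{g,h-1}$ must use a $G_2$-edge. I am unsure of that argument, and this is the step I regard as the main obstacle.

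\emph{Sufficiency.} Conditions (1) and (3) give $\delta^\circ\dG-\ep^\circ\dG=w$, and condition (4) gives $\baR\ep\dG-\baR\delta\dG=d=w$. Adding these yields $\ep\dG=\delta\dG$, since $\ep=\baR\ep+\ep^\circ$ coordinatewise and likewise for $\delta$.

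One compatibility must be checked: the path $\gamma$ in (2) has to coincide with the path traced out by the successor chain in (1). Otherwise its weight need not equal $\delta^\circ\dG-\ep^\circ\dG$. I would read the hypothesis so that (3) asserts this equality directly. Then nothing beyond this addition is needed, and the conditions on $\gamma$ serve only as a bookkeeping description.
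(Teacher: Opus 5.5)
You handle conditions (1), (2), (4), the telescoping in (3), and the sufficiency direction essentially as the paper does or leaves to the reader. The genuine gap is the step you flag yourself: the statement requires $p\in\nat$, i.e.\ $m>0$, and this is the one part the paper proves in detail. Reading $\nat$ as $\nat_0$ is not an option, because positivity of $m$ is used later. Lemma~\ref{lem:ell-bounds} needs $G_1\le m$, and Corollary~\ref{cor:G2-unique} needs $G_2\mid(\ep_1-\delta_1-h\ell)>0$. Moreover, the route you suggest, a size bound $d\le G_1\max\set{g,h-1}$ forcing a $G_2$-edge, fails when $g\ge h$. A single $hG_1$-edge has weight $hG_1\le gG_1=G_1\max\set{g,h-1}$, so the bound excludes nothing.

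Here is the missing argument. Suppose $m=0$. In Figures~\ref{fig:g>=h}, \ref{fig:g=h-1}, and \ref{fig:g<h-1}, every edge of weight $hG_1$ has the form $(g,x)\to(0,y)$, and every edge leaving a vertex $(0,y)$ has weight $G_2$. Hence a path of positive length with no $G_2$-edges is a single edge $(g,x)\to(0,y)$. So $\ell=1$ and $w=hG_1$, and $d=w$ gives $\ep_1-\delta_1=h$.
\begin{itemize}
\item If $g\le h-1$, Lemma~\ref{lem:tail-bounds} gives $hG_1=d\le (h-1)G_1$, a contradiction.
\item If $g\ge h$, you must use block structure rather than size. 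The edge starts at $\ep^*=(g,x)$, so $\ep_2=g$. Since $g\ge h$, this entry cannot be a block $(g)$, nor the first entry of a block $(c,g)$, which requires $c\le h-1$. For the same reason it cannot be the entry right after the leading $g$ of an element of $\cEm$. Hence $(\ep_1,g)$ is a semi-proper block, so $\ep_1\le h-1$, contradicting $\ep_1=\delta_1+h\ge h$.
\end{itemize}
Only once $m>0$ is established does your gcd argument give $m=G_1p$ with $p\in\nat$.
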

 
 \begin{proof}

Suppose that $\ep\dG = \delta\dG$.
  Let $\gamma$ be the path in $\PathG$
  defined by $\lub^k_{\cEo}(\ep^\circ)[2,3]$
  for $0\le k\le n$ where $n$ is the unique integer
  such that  $\lub^n_{\cEo}(\ep^\circ) = \delta^\circ$.
 Lemma \ref{lem:split} implies $\ep^\circ < \delta^\circ$, and 
hence $n>0$. By Proposition \ref{prop:cFo-diagram}, 
 $\lub^k_{\cEo}(\ep^\circ)[2,3]
 \ne \lub^{k+1}_{\cEo}(\ep^\circ)[2,3]$ for all $k\ge 0$.
 So, $n=\len(\gamma)>0$.

Let $w=mG_2+h\ell G_1$ as described in the theorem.
Let us prove that $m=G_1p>0$, and leave the remainder of the proof to the reader; see  \cite{chang-2024}.
Suppose that $m=0$.
Then, $w=d$ implies that   $\ep_1-\delta_1=h\ell $.
 From \theFigures,
 it is clear that $m=0$ implies that 
 $\gamma$ consists of a single edge $(g,x)\to (0,y)$, i.e.,  $\ell=1$ and  $\ep_1-\delta_1 = h $.
 If $g\le h-1$, then  Lemma \ref{lem:tail-bounds} implies
 $w=hG_1=d\le G_1  \max\set{g,h-1}\le (h-1)G_1$, and hence, 
 $g\ge h$ must be the case.
Since $m=0$ and $\ell=1$, Figure \ref{fig:g>=h} implies that 
$\ep[2,3]=(g,a)$, i.e., 
 \GGG{
 \ep\dG = \ep_1 G_1 + g G_2 + \cdots
 =   \delta_1 G_1 +\cdots .
 }
The inequality $g\ge h$ implies that $\ep=(\ep_1,g:\vEc w)$ where $\vEc w\in\cEp$, i.e.,  $\ep[1,2] = (\ep_1,g)$ is one of the \spb s 
in the decomposition of $\ep$, and hence, $\ep_1\le h-1$.
 Since this implies that $h \ell = h= \ep_1 - \delta_1 \le h-1$, we conclude that 
 $m>0$.
 The description $m=G_1p$ where $p\in\nat$ follows from $mG_2=(\ep_1-\delta_1-h\ell)G_1$ and 
 $\gcd(G_1,G_2)=1$.
\end{proof}

As an immediate consequence, we obtain the following result from Theorem \ref{thm:main-1}.
\begin{cor} \label{cor:G2-unique}
If $G_2>\max\set{g,h-1}$, then $G$ has \uniexp.
\end{cor}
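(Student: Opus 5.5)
We argue by contradiction, using Theorem \ref{thm:initial-values}. Suppose $G_2>\max\set{g,h-1}$ and $G$ does not have \uniexp. Then $\eval_G$ is not injective on $\cE$, so there are $\ep\ne\delta$ in $\cE$ with $\ep\dG=\delta\dG$. Since $\cE$ is totally ordered under the \lex\ order, we may relabel so that $\ep<\delta$. Theorem \ref{thm:initial-values} then applies to this pair. It gives a path $\gamma\in\thepaths$ from $\ep[2,3]$ to $\delta[2,3]$ with $\len(\gamma)>0$, and a weight $w=\wgt(\gamma)=mG_2+h\ell G_1$ with $m=G_1p$ for some $p\in\nat$. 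Part (\ref{thm:g=f}) adds that $w=d\in\fT$, where $d=\bar\ep\dG-\bar\delta\dG$.

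The key step is to compare the two sides of $w=d$. From $p\ge 1$ we get $m\ge G_1$, and since $\ell\ge 0$,
\[
w\ \ge\ mG_2\ \ge\ G_1G_2 .
\]
On the other side, $d\in\fT$, so Lemma \ref{lem:tail-bounds} gives $d\le G_1\max\set{g,h-1}$. Combining these with $w=d$ yields $G_1G_2\le G_1\max\set{g,h-1}$. Dividing by $G_1>0$ gives $G_2\le\max\set{g,h-1}$, which contradicts the hypothesis. Hence $\eval_G$ is injective, and $G$ has \uniexp.

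The only place that needs care is the claim $m\ge G_1$. It rests on $m>0$ together with $\gcd(G_1,G_2)=1$, and both were established in the proof of Theorem \ref{thm:initial-values}: there $m=0$ was ruled out, and $m=G_1p$ was derived from $mG_2=(\ep_1-\delta_1-h\ell)G_1$. So this step can be quoted directly rather than reproved. With that in hand, the remaining argument is just the inequality comparison above. No case split on $g\ge h$ versus $g<h$ is needed, because the bound in Lemma \ref{lem:tail-bounds} is already stated uniformly in terms of $\max\set{g,h-1}$.
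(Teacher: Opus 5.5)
Your proof is correct and is essentially the paper's argument. The paper applies $\gcd(G_1,G_2)=1$ to $mG_2=(\epsilon_1-\delta_1-h\ell)G_1$ to get $G_2\mid(\epsilon_1-\delta_1-h\ell)$, hence $G_2\le\epsilon_1\le\max\{g,h-1\}$; you use the equivalent fact $G_1\mid m$ to get $G_1G_2\le w=d\le G_1\max\{g,h-1\}$, which is the same divisibility bound read from the other side of the equation.
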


\begin{proof}
Let  $\ep$ and $\delta$ be as described in Theorem \ref{thm:initial-values}.
Suppose that $\ep\dG = \delta\dG$, and let $w=mG_2+h\ell G_1$ as described in Theorem \ref{thm:initial-values}
where $m>0$.
Then, $mG_2 = (\ep_1-\delta_1 - h\ell) G_1>0$, and $\gcd(G_1,G_2)=1$ implies
 that   $G_2$ divides $\ep_1-\delta_1 - h\ell>0$.
 Thus, $
 G_2\le  \ep_1-\delta_1 - h\ell \le\ep_1 \le \max\set{g,h-1}$. Therefore,  
  if  
$G_2>\max\set{g,h-1}$, then 
 there are no such pairs $\ep$ and $\delta$.
\end{proof}

Notice that for all three diagrams in \theFigures, 
two consecutive edges with weight $hG_1$ are in the form of 
\begin{gather} 
\overset{hG_1}{\To} (0,x) \to (1,x)\to \cdots \to (g-1,x) \to (g,x) \overset{hG_1}{\To} (0,y)
\label{eq:chain-1} \\
\intertext{or }
\overset{hG_1}{\To} (0,g) \to (1,g)\to \cdots \to  (h-1,g) \overset{ G_2}{\To} (0,0)
\to \cdots \to (g,0) \overset{hG_1}{\To} (0,y). \label{eq:chain-2}
\end{gather}
The path  in  $\PathG$ consisting of edges with weight $G_2$ described in 
(\ref{eq:chain-1}) and (\ref{eq:chain-2})
is called  a {\it chain}.

\begin{lemma}\label{lem:ell-bounds}
Let $\ep$, $\delta$, $\ep^*$,   $\delta^*$, and $\gamma$ be as defined in Theorem \ref{thm:initial-values}.
Suppose that $\ep\dG = \delta\dG$, and let $w=mG_2+h\ell G_1$ be the weight as described in the theorem 
where $m$ is a positive multiple of $G_1$. 
Then, $G_2\le \max\set{g,h-1}$, and 
\GGG{ 
    \ell \le \ell_0:=\frac{ gG_2 + G_1\max\set{g,h-1}}
  							{gG_2 + hG_1},\\
\begin{aligned}
g(\ell-1) \le& m \le \frac{G_1}{G_2}(\max\set{g,h-1}-h\ell),
\\
  					 	G_1\le &m\le g(\ell+1)+h.
\end{aligned}  
}
In particular,  if $g\ge h$, we have the following bounds: 
\begin{enumerate}
\item $\ell < g(G_2+G_1)/(gG_2 + hG_1) <g/h$ 

\item For $\ell\ge 0$, 
\GGG{
\max\set{ g(\ell-1),G_1}\le 
m\le \min\set{G_1/G_2(g-h\ell),g(\ell+1)+h}.
}
\item
 $G_1\le m\le  \min\set{g G_1/G_2,g+h} $ for $\ell=0$, 
 e.g., $g=h$.
\end{enumerate} 
If $g\le h-1$, then $\ell=0$ and $G_1\le m\le  \min\set{(h-1)G_1/G_2,g+h} $.
\end{lemma}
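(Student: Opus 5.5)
The plan is to read all the bounds off the single identity supplied by Theorem \ref{thm:initial-values}. Put $M:=\max\set{g,h-1}$. By item (4), $d=w$, and by Lemma \ref{lem:tail-bounds}, $d=(\ep_1-\delta_1)G_1$ with $0<\ep_1-\delta_1\le M$. Hence
\[
mG_2+h\ell G_1=(\ep_1-\delta_1)G_1\le MG_1 .
\]
Since $m$ is a positive multiple of $G_1$, we have $m\ge G_1$. We also get $mG_2\le (M-h\ell)G_1$, which gives the upper bound $m\le \frac{G_1}{G_2}(M-h\ell)$. The bound $G_2\le M$ comes from the argument of Corollary \ref{cor:G2-unique}: $\gcd(G_1,G_2)=1$ forces $G_2\mid \ep_1-\delta_1-h\ell$, and this quantity is positive.

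The combinatorial bounds, relating $m$ to $\ell$, come from the structure of the path $\gamma$ in the figures.

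\textbf{Lower bound $g(\ell-1)\le m$.} I would use the chain descriptions (\ref{eq:chain-1}) and (\ref{eq:chain-2}). Between two consecutive $hG_1$-edges there are at least $g$ unlabeled edges, each of weight $G_2$. So $\ell$ edges of weight $hG_1$ force at least $g(\ell-1)$ edges of weight $G_2$.

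\textbf{Upper bound $m\le g(\ell+1)+h$.} Between consecutive $hG_1$-edges a chain has at most $g+h$ unlabeled edges; this is the bound in form (\ref{eq:chain-2}), counted appropriately. The harder part is bounding the unlabeled edges before the first and after the last $hG_1$-edge. For this I would use the figures: a path containing no $hG_1$-edge cannot traverse $(g,\cdot)\to(0,\cdot)$. It therefore stays inside one monotone run of length at most about $g+h$, and the initial and final segments contribute at most about $g+h$ in total. I expect this edge counting to be the main obstacle, since the three figures must be checked separately and the constant has to come out exactly as $g(\ell+1)+h$. I would first verify it against Figure \ref{fig:g>=h}, the richest case, and then observe that the other two figures are subgraphs with fewer labeled edges.

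\textbf{Bound on $\ell$.} Combine $m\ge g(\ell-1)$ with $mG_2+h\ell G_1\le MG_1$:
\[
g(\ell-1)G_2+h\ell G_1\le MG_1 ,
\]
that is, $\ell(gG_2+hG_1)\le gG_2+MG_1$. This is exactly $\ell\le\ell_0$.

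\textbf{Case $g\ge h$.} Here $M=g$, so $\ell_0=g(G_2+G_1)/(gG_2+hG_1)$. Strictness in (1) comes from $m\ge G_1$ together with $m\ge g(\ell-1)$: if equality held throughout, then $d=MG_1$ would force $\ep_1=g$ and $\delta_1=0$. But an $\ep$ with $\ep_1=g$ lies in $\cEm$ with $\ep_2=0$, and I would check that this contradicts the start vertex of $\gamma$. The inequality $\ell_0<g/h$ is elementary, since $h\le g$. Items (2) and (3) are the previous bounds specialized to $M=g$, with $\ell=0$ in (3).

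\textbf{Case $g\le h-1$.} Here $M=h-1$. If $\ell\ge1$, then $h\ell G_1\le mG_2+h\ell G_1\le(h-1)G_1$, which is impossible. Hence $\ell=0$, and the stated bounds on $m$ follow from the general ones.
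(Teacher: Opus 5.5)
Most of your plan coincides with the paper's proof. Both use the identity $mG_2+h\ell G_1=(\ep_1-\delta_1)G_1\le MG_1$, the bound $G_2\le M$ via Corollary~\ref{cor:G2-unique}, the lower bound $g(\ell-1)\le m$ from the chains \eqref{eq:chain-1}--\eqref{eq:chain-2}, and $\ell\le\ell_0$ obtained by combining these. Your direct argument that $\ell=0$ when $g\le h-1$ is also fine.

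\textbf{The gap: the upper bound $m\le g(\ell+1)+h$.} This is the case $g\ge h$, $\ell\ge 1$, and your edge accounting there is wrong. You allow each inner chain up to $g+h$ edges of weight $G_2$ and claim the initial and terminal segments contribute ``about $g+h$'' in total. In fact the two outer segments alone can contribute $2g+h$ edges: start at $(0,g)$, run $(0,g)\to\cdots\to(h-1,g)\to(0,0)\to\cdots\to(g,0)$, cross $(g,0)\to(0,1)$, and stop at $(g,1)$. If long runs could occur freely in the inner chains and in both outer segments, you would only get $m\le(\ell+1)(g+h)$. No purely local reading of the figures gives $g(\ell+1)+h$.

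\textbf{The missing idea.} You need a global restriction: counting inner chains and outer segments together, $\gamma$ passes through the long run $(0,g)\to\cdots\to(h-1,g)\to(0,0)\to\cdots\to(g,0)$ at most once. After one pass, $\gamma$ sits at $(g,0)$. Any route in Figure~\ref{fig:g>=h} from $(g,0)$ back to $(0,g)$ contains the path \eqref{eq:no-chain}, which carries $g$ edges of weight $hG_1$ and so forces $\ell\ge g$. So you must first establish $\ell<g$, from $\ell<\ell_0\le g/h\le g$, and only then count. The count gives $m\le g(\ell-1)+h+2g$ if neither outer segment uses the long run, and $m\le g(\ell-1)+(g+h)+g$ if one does. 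This is how the paper concludes. In your plan the upper bound on $m$ comes before, and independently of, the bound on $\ell$.

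\textbf{Two smaller corrections.}
\begin{itemize}
\item In your strictness argument, $\ep_1=g$ does not give $\ep_2=0$. It gives only $\ep_2\le g-1$, because $\ep_2$ starts a semi-proper block. Hence $\ep^*$ is not a vertex $(g,\cdot)$, the first edge of $\gamma$ has weight $G_2$, and $m\ge g(\ell-1)+1$ contradicts equality. This strictness is genuinely needed for $\ell<g$ when $g=h=1$; the paper only writes $\le$.
\item $\ell_0\le g/h$ holds with equality when $g=h$. So ``$\ell_0<g/h$'' is not elementary; it is false in that case.
\end{itemize}
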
 

\begin{proof}
If  $G_2> \max\set{g,h-1}$, then by Corollary \ref{cor:G2-unique}, 
the equality $\ep\dG = \delta\dG$ is impossible.  Thus, we have  $G_2\le \max\set{g,h-1}$.
Recall that the chains (\ref{eq:chain-1}) and (\ref{eq:chain-2}) consisting of edges of weight $G_2$ are formed by two consecutive edges of weight $hG_1$.
The numbers of edges with weight $G_2$ in these two chains
are $g$ and $g+h$, respectively.
By Theorem \ref{thm:initial-values} (3), if $\ell=0$, then $G_1\le m\le g+h$, and
$w=mG_2 \le G_1\max\set{g,h-1}$.
Since the first inequality is trivial for $\ell=0$, we proved the three 
inequalities for $\ell=0$.

Suppose that  $\ell\ge 1$.    
In this case, the path $\gamma$ must have crossed at least $\ell-1$ copies of the  chains
 (\ref{eq:chain-1}) and (\ref{eq:chain-2}).
 Thus, we have $m\ge g(\ell-1)$, and hence,
\GGG{
g(\ell-1)G_2 + h\ell G_1 \le w=mG_2 + h\ell G_1 \le G_1\max\set{g,h-1}\\
\implies
 \ell\le \frac{ gG_2 + G_1\max\set{g,h-1}}{gG_2 + hG_1},
 \quad 
 g(\ell-1) \le  m \le \frac{G_1}{G_2}(\max\set{g,h-1}-h\ell).
 }

Let us prove the last inequality.
If $g\le h-1$, then the first inequality implies $\ell=0$.
Thus, $g\ge h$ must be the case.
By the first inequality, 
 \GGG{
\ell\le 
g\,\frac{G_2 + G_1}{gG_2 + hG_1}
=\frac gh \frac{G_2 + G_1}{ G_2 +  G_1}= g/h.
 } 
 
Consider the path in Figure \ref{fig:g>=h}:
\begin{gather}
(g,0) \to (0,1) \to \cdots \to (g,h-1) \to (0,h) \to
\cdots \to  (0,g).
\label{eq:no-chain}   
\end{gather}
Thus, the number of edges with weight $hG_1$ in this path
is $  g$.  
Thus, $\ell<g/h$ implies that $\gamma$ cannot contain this path.

We claim that $\gamma$ contains at most one copy of the chain described in (\ref{eq:chain-2}),
which has   $g+h$ edges with weight $G_2$.
If it has at least two copies of the chains, then 
Figure \ref{fig:g>=h}  
implies  that $\gamma$  must contain the path (\ref{eq:no-chain}). 

Let the following be the path	 $\gamma$:
\GGG{
(x_0,y_0) \to \cdots \to  (x_k, y_k) \overset{hG_1}{\To} 
(x_{k+1},y_{k+1})\to \cdots  \to (x_n,y_n).
}
There are $\ell$ edges with weight $hG_1$, and 
in between these edges, there are $g$ edges with weight $G_2$
except for at most one pair where 
there are $g+h$ edges with weight $G_2$.
Thus, the number of edges with weight $G_2$ trapped in between 
the edges with weight $hG_1$ is at most $g(\ell-1) + h$.

Let us count the number of outer edges (with weight $G_2$).
If these outer edges do not come from the longer chain (\ref{eq:chain-2}), 
then it is clear that the number is at most $2g$.
Thus, $m\le g(\ell-1) + h + 2g=g(\ell+1) + h$.
Suppose that the left outer edges come from (\ref{eq:chain-2}).
Since the path (\ref{eq:no-chain}) cannot be contained in $\gamma$, 
Figure \ref{fig:g>=h} shows that 
 the path $\gamma$ cannot contain the longer chain (\ref{eq:chain-2}) and 
 the right outer edges cannot come from the chain (\ref{eq:chain-2}).
 So, $m\le g(\ell-1) + (g+h) + g=g(\ell+1) + h$ where the last two terms 
 come from the outer edges.
 Suppose that the right outer edges come from (\ref{eq:chain-2}).
Since the path (\ref{eq:no-chain}) cannot be contained in $\gamma$, 
Figure \ref{fig:g>=h} shows that 
 the path $\gamma$ cannot contain the longer chain (\ref{eq:chain-2}) and 
 the left outer edges cannot come from the chain (\ref{eq:chain-2}).
 For both cases, 
we have $m\le g(\ell-1) + (g+h) + g=g(\ell+1) + h$ where the last two terms 
 come from the outer edges.
Thus, $  g(\ell-1)\le m \le g(\ell+1)+h$.
This concludes the proof of the three main inequalities, and 
the remaining statements follow immediately from the three 
inequalities.
\end{proof}

Using these bounds, we prove one of the main assertions of Theorem \ref{thm:main-1}.
\begin{cor} \label{cor:G1}
The sequence $G$ has \uniexp\ if   one of the following is satisfied:
 \begin{enumerate}
 
 \item $G_2>\max\set{g,h-1}$;
 
\item $G_1>g+h$, provided that 
$g\le h-1$;
\item  $G_1>g(\ell_0+1)+h$ where $\ell_0$ is as defined in Lemma \ref{lem:ell-bounds},  provided that $g\ge h$, 
e.g., $G_1\ge (g^2+gh+h^2)/h$.

\end{enumerate}  
\end{cor}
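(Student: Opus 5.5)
The plan is to argue by contradiction, treating each part separately but always starting from Theorem \ref{thm:initial-values}. Suppose $G$ fails to have \uniexp. Then there are $\ep<\delta$ in $\cE$ with $\ep\dG=\delta\dG$. Theorem \ref{thm:initial-values} gives a path $\gamma$ with weight $w=mG_2+h\ell G_1$, and it guarantees that $m$ is a positive multiple of $G_1$. Lemma \ref{lem:ell-bounds} then applies, and the three conditions will each contradict one of its bounds.

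Part (1) is exactly Corollary \ref{cor:G2-unique}, so I will simply cite it. Equivalently, it is the first conclusion of Lemma \ref{lem:ell-bounds}, namely that $G_2\le\max\set{g,h-1}$ whenever such a pair exists.

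For Part (2), assume $g\le h-1$. The last assertion of Lemma \ref{lem:ell-bounds} says that $\ell=0$ and $G_1\le m\le \min\set{(h-1)G_1/G_2,\,g+h}$. In particular $G_1\le g+h$, which contradicts the hypothesis $G_1>g+h$.

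For Part (3), assume $g\ge h$. The general bounds of Lemma \ref{lem:ell-bounds} give $G_1\le m\le g(\ell+1)+h$ and $\ell\le\ell_0$. Combining them yields $G_1\le g(\ell_0+1)+h$, again a contradiction. I need to watch the definition of $\ell_0$ here. When $g\ge h$ we have $\max\set{g,h-1}=g$, so the $\ell_0$ of Lemma \ref{lem:ell-bounds} equals $g(G_2+G_1)/(gG_2+hG_1)$, which matches the $\ell_0$ in Theorem \ref{thm:main-1}. Its numerator is the same for both definitions, so no discrepancy arises.

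For the parenthetical sufficient condition, I will use item (1) of Lemma \ref{lem:ell-bounds}, which gives $\ell_0<g/h$. From this,
\[
g(\ell_0+1)+h < \frac{g^2}{h}+g+h=\frac{g^2+gh+h^2}{h}.
\]
So $G_1\ge (g^2+gh+h^2)/h$ implies $G_1>g(\ell_0+1)+h$, and the main condition of Part (3) applies. Since $\ell$ is an integer, one could instead work with $\lfloor \ell_0\rfloor$, but that refinement is not needed.

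I expect the main obstacle to be confirming that the strict inequality $\ell_0<g/h$ holds as stated. We have $\ell_0=g(G_2+G_1)/(gG_2+hG_1)$, and $\ell_0<g/h$ is equivalent to $h(G_2+G_1)<gG_2+hG_1$, that is, $hG_2<gG_2$, which requires $g>h$. When $g=h$ we only get $\ell_0=1$ and the bound is not strict. In that case I will argue directly with $\ell=0$: item (3) of Lemma \ref{lem:ell-bounds} gives $G_1\le m\le g+h$, so any $G_1\ge (g^2+gh+h^2)/h=3g>2g=g+h$ still yields a contradiction. That closes the case and the corollary.
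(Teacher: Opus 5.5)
Your proposal is correct and is essentially the paper's proof: Part~(1) is Corollary~\ref{cor:G2-unique}, and Parts~(2) and~(3) are read off from the bounds $G_1\le m\le g+h$ and $G_1\le m\le g(\ell+1)+h$ with $\ell\le\ell_0$ in Lemma~\ref{lem:ell-bounds}, while the parenthetical condition comes from $\ell_0<g/h$. Your separate handling of $g=h$ is a worthwhile refinement, since there $\ell_0=1=g/h$ and the paper's appeal to the strict inequality fails at $G_1=3g$, whereas using $\ell=0$ closes the case (if $\ell\ge 1$ then $w>hG_1=gG_1\ge d$, contradicting $w=d$), which gives $G_1\le m\le 2g<3g$.
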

\begin{proof} 
If $G$ does not have \uniexp,
then Lemma \ref{lem:ell-bounds} implies  $G_2\le \max\set{g,h-1}$, which proves the first result.
The second result  follows    from 
 $G_1\le m\le  \min\set{(h-1)G_1/G_2,g+h} $.
 The third result  follows    from
Lemma \ref{lem:ell-bounds} (2).
The lower bound example on $G_1$ follows from  $\ell_0<g/h$. 
\end{proof}

\begin{example} \label{exm:(81,5)}
\rm
Consider $G\in\cH$ for the $g$-golden ratio recurrence where $g=7$.
Then, by Theorem \ref{thm:g-golden}, if $(G_1,G_2)=(49,1)$, then $\#R(X) = X + O(1)$. 
 Notice that $ \ell G_1  + g\ell G_2= \ell G_3$, so 
\GGG{
 G_1 = g^2 G_2 \implies
 g G_1 = (g-1) G_1 +  g^2 G_2  
  =   (g^2-g(g-1)) G_2 +(g-1)G_3\\
  \implies
gG_1 =  gG_2 + (g-1) G_3
 }
 where the recurrence was used $g-1$ times to obtain
 the expression with $G_3$ in it.
Thus, it does not satisfy \uniexp.

If $(G_1,G_2)=(49,2)$, then $g(\ell_0+1)+1\approx 47.7$ where $\ell_0$ is the value described in 
Lemma \ref{lem:ell-bounds}.
Since $G_1>47.7$,  Corollary \ref{cor:G1} implies that $G$ has \uniexp.
Thus,  
$\#R(X) = r X + O(1) \approx 0.7495 X + O(1)$.
\end{example}
 
\begin{deF}\label{def:cFstar}
\rm
Let $\cEstar_G$ or simply $\cEstar$ denote the collection of equivalence classes  of $\cE$ 
such that the equivalence relation is given by $\ep\sim \delta$ if $ \ep\dG = \delta\dG$.
The equivalence classes are denoted by $\ecl \ep$ for $\ep\in\cE$, and if $\#\ecl \ep>1$, then
the equivalence class is called {\it non-trivial}.
\end{deF}  

\begin{theorem} \label{thm:uniform-bound}
There is  $B\in\nat$ such that $\#\ecl \ep \le B$ for all $\ep\in\cE$.
\end{theorem}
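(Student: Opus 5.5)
The plan is to show that every equivalence class $\ecl \ep$ is determined by the part of $\ep$ above the first entry, up to boundedly many choices for the first entry. The key tool is Lemma \ref{lem:split} together with Corollary \ref{cor:eval}.

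Fix $\ep\in\cE$ and list the elements of $\ecl\ep$ in increasing \lex\ order, $\ep^{(1)}<\ep^{(2)}<\cdots$. For any two of them, $\ep^{(i)}<\ep^{(j)}$, Lemma \ref{lem:split} gives $(\ep^{(i)})^\circ<(\ep^{(j)})^\circ$. Consequently the map $\delta\mapsto\delta^\circ$ is injective on $\ecl\ep$.

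Next I bound the spread of the values $\delta^\circ\dG$. By (\ref{eq:body-tail}) and Lemma \ref{lem:tail-bounds}, for any two members $\ep^{(i)}<\ep^{(j)}$ we have
\[
0<(\ep^{(j)})^\circ\dG-(\ep^{(i)})^\circ\dG=\baR{\ep^{(i)}}\dG-\baR{\ep^{(j)}}\dG\le G_1\max\set{g,h-1}.
\]
So all the values $\delta^\circ\dG$, for $\delta\in\ecl\ep$, lie in an integer interval of length at most $G_1\max\set{g,h-1}$. By Corollary \ref{cor:eval}, $\eval_G$ is injective on $\cEo$, so distinct $\delta^\circ$ give distinct integers. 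Hence
\[
\#\ecl\ep\le G_1\max\set{g,h-1}+1 .
\]

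A cleaner and sharper bound is also available. Two members of $\ecl\ep$ with the same first entry $\delta_1$ have equal $\delta^\circ\dG$, hence equal $\delta^\circ$, and are therefore equal. Since $\delta_1\le\max\set{g,h-1}$ for every $\delta\in\cE$ (the first entry is at most $g$ in $\cEm$, and at most $\max\set{g-1,h-1}$ in $\cEp$), this gives
\[
\#\ecl\ep\le \max\set{g,h-1}+1=:B.
\]

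I expect the only point needing care to be the claim that the entries of $\cE$ are bounded by $\max\set{g,h-1}$. This follows directly from the list of \spb s together with the leading entry $g$ for tuples in $\cEm$. Everything else is immediate from Lemma \ref{lem:split} and Corollary \ref{cor:eval}.
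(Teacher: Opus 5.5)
Your proof is correct, and it takes a genuinely different and more elementary route than the paper's.

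\textbf{The paper's route.} The paper fixes the smallest element $\mu$ of $\ecl\ep$. Every other member $\delta$ then satisfies $\delta^\circ=\lub^n_{\cEo}(\mu^\circ)$ for some $n$. Using Theorem \ref{thm:initial-values} and the bounds on $m$ and $\ell$ from Lemma \ref{lem:ell-bounds}, the paper bounds the length $n$ of the corresponding path in $\PathG$ independently of $\mu$. This relies on the full graph analysis behind Proposition \ref{prop:cFo-diagram}, and the resulting constant is never made explicit.

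\textbf{Your route.} You use only the identity $\delta\dG=\delta_1G_1+\delta^\circ\dG$ and the injectivity of $\eval_G$ on $\cEo$ (Corollary \ref{cor:eval}).
\begin{itemize}
\item Within one class, $\delta^\circ\dG$ is determined by $\delta_1$.
\item Since $\delta^\circ\in\cEo$, injectivity then determines $\delta^\circ$, and hence $\delta$, from $\delta_1$.
\item The first entry satisfies $\delta_1\le\max\set{g,h-1}$, so you get the explicit bound $B=\max\set{g,h-1}+1$, depending only on $(g,h)$.
\end{itemize}
Combined with Lemma \ref{lem:split}, your argument also shows that first entries strictly decrease as one moves up a class in the \lex\ order. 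Your first bound, $G_1\max\set{g,h-1}+1$, is also valid for the theorem, but the second bound supersedes it.

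\textbf{Comparison.} The paper's route does not buy a better bound. What it buys is the link between a class and a bounded path in $\Gamma_G$ starting at $\mu[2,3]$. That link is the structure used later in the classification through $V$ and Algorithm \ref{alg:steps}. For the uniform bound asked for here, your pigeonhole argument on the first entry is enough and is cleaner.
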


\begin{proof}
Let $\ep\in\cE$.
Since $\ep\in\ecl \ep$, by the well-orderedness,
we may choose the smallest element $\mu\in \ecl \ep$.
Thus, 
$\ecl \ep :=\set{\delta \in \cE : \mu\dG = \delta\dG,\ \mu\le \delta}$.

Let us apply Lemma \ref{lem:ell-bounds}  to $\mu<\delta$ where $\mu$ is fixed,
 and we look for all $\delta>\mu$
in $\ecl \ep$.
Since the bounds on $m$ and $\ell$ in the lemma are independent of 
 $\mu$ and $\ep$, the number of paths in $\Path(\Gamma_G)$ beginning at 
 $\mu[2,3]$ that can be considered is  bounded independently of $\mu$ and $\ep$.
 Therefore, the number of tuples in $\ecl \ep$ is bounded
 independently of $\ep$.
\end{proof}

\subsection{Classification}\label{sec:classification-sub}

Our goal in Section \ref{sec:classification} is to characterize  each equivalence class in  $\cEstar$ defined in Definition \ref{def:cFstar}.
That is, given $\ep\in\cE$, we want to be able to determine whether $
\ecl \ep $ is non-trivial. 
We show in this section
 that the non-triviality of $\ecl\ep$ is determined by the first few entries of $\ep$ (Propositions \ref{prop:V-circuit} and \ref{prop:V-chain}).

\begin{notation}
\rm
Let $\ep\in\cE$.
We   denote by $(a_1,\dots,a_n,\star)$ the tuples $\ep\in\cE$ such that 
$\ep[1,n]=(a_1,\dots,a_n)$.
When using the  colon notation for \spb\ decompositions, we may use  $\ep_1\diamond\ep_2$ to bypass the explicit block boundaries of these initial terms, e.g.,
 $\ep=(\ep_1\diamond\ep_2: a,g: \star)$ means that $(\ep_3,\ep_4)=(a,g)$ appears as a semi-proper block in the decomposition, while the specific block structures of the preceding entries are left unspecified.
\end{notation}

Consider the directed graph in Proposition \ref{prop:cFo-diagram} for $g\ge h$.  
Let $\gamma$ be a circuit in $\Path(\Gamma_G)$ that does not repeat any edges.
  If $\gamma $ contains the edge $(g,h-1)\to (0,0)$ of the weight $hG_1$,
   we call it a {\it short circuit}, and we call it a {\it long circuit} if it does not contain the edge.
   In this paper, a circuit in $\Path(\Gamma_G)$ where $g\ge h $ is
   defined to be either a long circuit or a short circuit, so that 
   it does not repeat the edges.

\begin{lemma} \label{lem:gamma-length}
Let  $\gamma$ be a path $\Path(\Gamma_G)$ of length $n$ associated with $\ep\dG = \delta\dG$, 
as defined in Theorem \ref{thm:initial-values}.
Then, $\gamma$ cannot contain a long circuit if $g\ge h$, and $\gamma$ cannot contain an edge with 
weight $hG_1$ if $g\le h-1$.
 
\end{lemma}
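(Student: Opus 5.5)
Both assertions will follow from the weight bound in Theorem \ref{thm:initial-values}. Since $\ep\dG=\delta\dG$, part (4) of that theorem gives $w=\wgt(\gamma)=d\in\fT$. Hence
\[
mG_2+h\ell G_1=w\le G_1\max\set{g,h-1}
\]
by Lemma \ref{lem:tail-bounds}, and $m\ge G_1$ by part (3). The plan is to show that a long circuit (when $g\ge h$), or a single edge of weight $hG_1$ (when $g\le h-1$), forces $w$ above this bound.

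\emph{Case $g\le h-1$.} The bound becomes $mG_2+h\ell G_1\le (h-1)G_1$. Any edge of weight $hG_1$ alone contributes $hG_1>(h-1)G_1$, and $m\ge 0$, so $\ell=0$. This is the same computation already used in Lemma \ref{lem:ell-bounds}. It applies in both Figures \ref{fig:g=h-1} and \ref{fig:g<h-1}.

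\emph{Case $g\ge h$.} I would first identify a long circuit in Figure \ref{fig:g>=h}. Such a circuit avoids the edge $(g,h-1)\to(0,0)$, so it must go all the way around:
\[
(0,0)\to\cdots\to(g,0)\to(0,1)\to\cdots\to(g,h-1)\to(0,h)\to\cdots\to(0,g)\to\cdots\to(h-1,g)\to(0,0).
\]
In particular it contains the path (\ref{eq:no-chain}), or a cyclic rotation of it. Counting the edges of weight $hG_1$ along this loop gives at least $g$ of them: one after each of $(g,0),(g,1),\dots,(g,h-1)$, and one for each step $(g,x)\to(0,x+1)$ with $x=h,\dots,g-1$. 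I will check that this counting matches the labels in Figure \ref{fig:g>=h}. Then $\ell\ge g$, which contradicts Lemma \ref{lem:ell-bounds}(1), namely $\ell<g/h\le g$. Alternatively, one can argue directly: $w\ge mG_2+hgG_1\ge G_1G_2+hgG_1>gG_1=G_1\max\set{g,h-1}$.

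The main obstacle is the combinatorial claim that every circuit not using the short edge $(g,h-1)\to(0,0)$ traverses the whole outer loop, and so contains $g$ edges of weight $hG_1$. I would verify this by reading off the out-degrees in Figure \ref{fig:g>=h}. The only vertex with two outgoing edges is $(g,h-1)$, whose edges go to $(0,0)$ and $(0,h)$. Removing the short edge leaves a single directed cycle, so any circuit avoiding it is exactly that cycle. A path of length $n$ containing that circuit then contains all of its edges, and the counting above applies.
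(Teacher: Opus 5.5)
Your proposal is correct and follows the paper's proof. For $g\ge h$, a long circuit carries the $g$ edges of weight $hG_1$ from \eqref{eq:no-chain}, so $\ell\ge g$, contradicting $\ell<g/h$ from Lemma \ref{lem:ell-bounds}(1). For $g\le h-1$, the bound $w\le (h-1)G_1$ forces $\ell=0$. Your out-degree observation, that deleting $(g,h-1)\to(0,0)$ leaves a single directed cycle, supplies the justification the paper leaves implicit for ``any long circuit must contain \eqref{eq:no-chain}.''
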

\begin{proof}
If $g\ge h$, then 
as noted in the proof of Lemma \ref{lem:ell-bounds}, 
the path \eqref{eq:no-chain} contains $g$ edges with weight 
$hG_1$, i.e., $\ell \ge g$.
This contradicts Lemma \ref{lem:ell-bounds} (1).
Because any long circuit must contain the path \eqref{eq:no-chain}, this contradiction completes the proof for the case $g\ge h$. 
The assertion for $g\le h-1$ follows from Lemma \ref{lem:ell-bounds} (3).
\end{proof}

The proof of the next lemma follows by tracing the paths and counting the corresponding edge transitions in Figures \ref{fig:g>=h}--\ref{fig:g<h-1}; the details are left to the reader. 

 \begin{lemma} \label{lem:long-short-circuits-0}
Let $g\ge h$.

Let $\ep=(0:0:0: \star)\in \cE$.
Then, $ \lub_{\cEo}^m(\ep)$ for $0\le m\le m_1:=g^2 + h + g$ forms a long circuit $\gamma_1$. Let $\tau:=\lub_{\cEo}^{m_1}(\ep)$. Then,
the following are true:
\begin{itemize}
\item If $\ep=(0:0:0:c:\star)$ where $c\le g-2$,
then  $\tau=(0:0:0:c+1:\star)$.
 \item If $\ep=(0:0:0:g-1:\star)$, 
then  $\tau=(0:0:0,g:\star)$.
 
  \item If $\ep=(0:0:0:c,g:\star)$ where $c\le h-2$,
then   $\tau=(0:0:0:c+1,g:\star)$.
 
   \item If $\ep=(0:0:0:h-1,g:\star)$  
then  $\tau=(0:0:0:0:\star)$.
 
\end{itemize} 

Let $\delta=(0:0:0,g: \star)\in \cE$.
Then, $ \lub_{\cEo}^m(\delta)$ for $0\le m\le m_2:=(g+1)h$ forms a short circuit $\gamma_2$. Let $\mu:=
\lub_{\cEo}^{m_2}(\ep)$. Then,
the following are true:
\begin{itemize}
\item If $\delta=(0:0:0,g:c:\star)$ where $c\le g-2$,
then  $\mu=(0:0: 0:0:c+1:\star)$.
\item If $\delta=(0:0:0,g:g-1:\star)$,
then  $\mu=(0:0: 0:0,g:\star)$.
 \item If $\delta=(0:0:0,g:c,g:\star)$ where $c\le h-2$,
then  $\mu=(0:0:0:0:c+1,g:\star)$.
  \item If $\delta=(0:0:0,g:h-1,g:\star)$,
then  $\mu=(0:0:0:0:0:\star)$.
\end{itemize} 
 
\end{lemma}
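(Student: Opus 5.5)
The plan is to reduce everything to iterating $\lub_{\cE}$ on the shifted tuple via Lemma \ref{lem:s-function}, and to read off the transitions from Proposition \ref{prop:cFo-diagram} (Figure \ref{fig:g>=h}). Since $\wt\ep = s\Inv(\lub_\cE(s(\ep)))$, iterating $\lub_{\cEo}$ on $\ep$ is the same as iterating $\lub_\cE$ on $s(\ep)=(\ep_2,\ep_3,\dots)$. So we only need to understand how $\lub_\cE$ acts on tuples whose initial part is $(0:0:\star)$ or $(0,g:\star)$. By Example \ref{exm:lub} and Lemma \ref{lem:add-Gn}, $\lub_\cE$ acts like adding $1$ in a mixed-radix counter. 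It increments the first entry when the leading block is below the top of (\ref{eq:spb-order}). When the longest prefix consists of maximal blocks, of the form $(h-1,g:\cdots:h-1,g)$ or $(g:h-1,g:\cdots)$, it zeroes that prefix and carries into the next entry.

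First I would handle the long circuit. Start from $s(\ep)=(0:0:c\text{-part}:\star)$ and follow the pair $(\ep_2,\ep_3)$ around Figure \ref{fig:g>=h}. For each fixed value $x$ of $\ep_3\in\{0,\dots,g\}$, the second entry runs $0,1,\dots,g$, which is $g$ or $g+1$ steps. When the second entry would exceed its allowed range, the carry moves $\ep_3$ up. Along the row $\ep_3=x$ with $x\le h-1$, entry $\ep_2$ can reach $g$ only with $\ep_3\le h-1$, since $(\ep_3,g)$ must then be a semi-proper block (read in the paper's orientation). I would also track the short edge $(g,h-1)\to(0,h)$ and the passage through the column $(0,g),\dots,(h-1,g)$ back to $(0,0)$. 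Counting the edges should give $m_1=g^2+h+g$: that is $g$ rows contributing $g+1$ edges each, minus adjustments, plus the final $h$ edges $(0,g)\to\cdots\to(h-1,g)\to(0,0)$. I would verify this count by summing row by row directly from the figure rather than trusting a closed form.

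The key point is to identify what the carry does on leaving the circuit. The last edge $(h-1,g)\to(0,0)$ zeroes entries $2,3$ together with any maximal prefix, and adds $1$ to entry $4$ of $\ep$, the first entry after the zeroed block. The four bullets then follow by the counter rule applied to the block beginning at position $4$:
\begin{itemize}
\item $c\mapsto c+1$ for $c\le g-2$;
\item $g-1\mapsto (0,g)$, since $(g)$ alone is not semi-proper;
\item $(c,g)\mapsto(c+1,g)$ for $c\le h-2$;
\item $(h-1,g)$ overflows and carries further, which is recorded as $\star$ absorbing the carry.
\end{itemize}

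Next I would do the short circuit from $\delta=(0:0:0,g:\star)$ in the same way. Here the starting pair is $(0,0)$, but position $4$ holds $g$, so entries $3,4$ form the block $(0,g)$. The row structure changes: the circuit uses the edge $(g,h-1)\to(0,0)$ of weight $hG_1$ in place of the long detour. Counting should give $h$ rows of $g+1$ edges each, so $m_2=(g+1)h$. At closure the block $(\cdot,g)$ occupying positions $3,4$ is zeroed, and the carry lands at position $5$. This produces the four bullets with everything shifted by one.

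The main obstacle is the exact bookkeeping of the carries: checking that no intermediate step produces a carry beyond position $3$ (respectively $4$) before the circuit closes. Equivalently, I must check that the first return of $(\ep_2,\ep_3)$ to $(0,0)$ happens exactly at step $m_1$ (respectively $m_2$), with the stated higher entries. I would do this by invoking Lemma \ref{lem:add-Gn} together with the second statement of Proposition \ref{prop:cFo-diagram}, namely that $\wt\ep[2,3]=(0,0)$ from $(g,h-1)$ iff $\ep=(0,g:h-1,g:\vEc w)$. That condition is exactly what distinguishes the short from the long case.
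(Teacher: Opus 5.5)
Your route is the paper's: conjugate $\lub_{\cEo}$ to $\lub_{\cE}$ via $s$ (Lemma~\ref{lem:s-function}), walk the pair $(\epsilon_2,\epsilon_3)$ around Figure~\ref{fig:g>=h}, count edges, and read off the final carry. Your treatment of the closing edges is right, and so is the short-circuit count $h(g+1)$.
\begin{itemize}
\item In the long circuit the last step $(h-1,g)\to(0,0)$ sends $(0,h-1,g,\mathbf w)$, with $\mathbf w$ starting at position $4$, to $(0,0,0,\lub_\cE(\mathbf w))$.
\item In the short circuit the last step $(g,h-1)\to(0,0)$ sends $(0,g,h-1,g,\mathbf w)$ to $(0,0,0,0,\lub_\cE(\mathbf w))$.
\item In each case the four bullets are exactly the four cases of $\lub_\cE$ according to the first block of $\mathbf w$.
\end{itemize}

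The step that is wrong is your description of the rows of the long circuit. You claim that $\epsilon_2$ can reach $g$ only when $\epsilon_3\le h-1$, but this has the block orientation backwards. The entry $\epsilon_2=g$ is the second entry of the block $(\epsilon_1,\epsilon_2)=(0,g)$, so it forces nothing on $\epsilon_3$ beyond $\epsilon_3\le g-1$; for example, the vertex $(g,g-1)$ lies on the circuit when $g>h$. The genuine restriction is the reverse one: when $\epsilon_3=g$ the block is $(\epsilon_2,g)$, so $\epsilon_2\le h-1$. Hence your sentence ``for each $x\in\{0,\dots,g\}$ the second entry runs $0,\dots,g$'' fails for $x=g$. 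Taken literally, your restriction would shorten the rows $x=h,\dots,g-1$, and the total would fall below the $g^2+g+h$ you assert; this is presumably where the vague ``minus adjustments'' comes from.

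The correct bookkeeping needs no adjustments.
\begin{itemize}
\item Since $\epsilon_3=0$ starts as a single block and $\epsilon_4\le g-1$ is never touched, $\epsilon_3$ runs through $0,1,\dots,g-1$ as a single block before reaching $g$.
\item For each $x=0,\dots,g-1$, the row $(0,x)\to\cdots\to(g,x)$ contributes $g$ unlabeled edges plus one exit edge $(g,x)\to(0,x+1)$ of weight $hG_1$. At $x=h-1$ the exit is $(g,h-1)\to(0,h)$, because $\epsilon_3=h-1$ is a single block here; this is where Proposition~\ref{prop:cFo-diagram}(1) enters.
\item The last row contributes $(0,g)\to\cdots\to(h-1,g)\to(0,0)$, which is $h$ edges.
\end{itemize}
So $m_1=g(g+1)+h$ exactly, and no carry reaches position $4$ before the final edge. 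Equivalently, for $g\ge h$ the vertex set of $\Gamma_G$ is $\{(a,x):0\le a\le g,\ 0\le x\le g-1\}\cup\{(c,g):0\le c\le h-1\}$. This set has $g(g+1)+h$ elements, and the long circuit visits each of them exactly once. With this correction your argument is complete.
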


The proof of the next lemma follows from 
Lemma \ref{lem:long-short-circuits-0}, and 
we leave the proof  to the reader.

\begin{lemma} \label{lem:long-short-circuits}
Let $g\ge h$, and
let $\ep=(0:0:0:\star)$, and $\delta=(0:0:0,g:\star)$.
Then, $\lub_{\cEo}^m(\ep)$ for $0\le m\le g^2+h + g $ forms a long circuit, and 
$\lub_{\cEo}^m(\delta)$  for $0\le m\le (g+1)h+(g^2+h + g)$ forms a short circuit followed 
by a long circuit. 

If $\ep=(0:0:0:g-1:\star)$, then
$\lub_{\cEo}^m(\ep)$ for $0\le m\le 2(g^2+h + g) +(g+1)h $ forms two  long circuits and one short circuit, and 
if $\ep=(0:0:0:\star)$ where $\ep_4\ne g-1$,
then $\lub_{\cEo}^m(\ep)$ for $0\le m\le 2(g^2+h + g) $ forms two  long circuits.

\end{lemma}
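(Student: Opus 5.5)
The plan is to derive Lemma \ref{lem:long-short-circuits} from Lemma \ref{lem:long-short-circuits-0} by concatenating its circuits: we track the fourth-and-higher entries of the current tuple and check which circuit type the next step starts.

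\emph{First assertion, starting at $\ep=(0:0:0:\star)$.} The claim about $\ep$ is exactly the first half of Lemma \ref{lem:long-short-circuits-0} with $m_1=g^2+h+g$, so nothing new is needed.

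\emph{First assertion, starting at $\delta=(0:0:0,g:\star)$.}
\begin{enumerate}
\item The second half of Lemma \ref{lem:long-short-circuits-0} gives the short circuit of length $m_2=(g+1)h$ ending at $\mu$.
\item In each of the four cases listed there, $\mu$ has the form $(0:0:0:0:\star')$ or $(0:0:0:0,g:\star')$. In both forms $\mu=(0:0:0:\star'')$, that is, it begins with three zero blocks, where $\star''$ is the remainder. This uses the colon convention: $(0,g:\star')$, $(c+1:\star)$, $(0:\star)$, $(c+1,g:\star)$ and $(0,g:\star)$ all lie in $\cEp$.
\item Applying the first half of Lemma \ref{lem:long-short-circuits-0} to $\mu$ then gives a long circuit of $g^2+h+g$ further steps.
\end{enumerate}
Since $\lub_{\cEo}^{a+b}=\lub_{\cEo}^b\circ\lub_{\cEo}^a$, the iterates for $0\le m\le (g+1)h+(g^2+h+g)$ form a short circuit followed by a long circuit.

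\emph{Second assertion, $\ep_4\ne g-1$.} Start from $\ep=(0:0:0:\star)$. After one long circuit we reach $\tau$, and the fourth block is examined through the four cases.
\begin{itemize}
\item If $\ep=(0:0:0:c:\star)$ with $c\le g-2$, then $\tau=(0:0:0:c+1:\star)$, again three zero blocks followed by an element of $\cEp$.
\item If the fourth block is $(c,g)$ with $c\le h-2$, then $\tau=(0:0:0:c+1,g:\star)$, of the same type.
\item If the fourth block is $(h-1,g)$, then $\tau=(0:0:0:0:\star)$, of the same type.
\end{itemize}
In every case $\tau=(0:0:0:\star')$ and $\tau\ne(0:0:0,g:\star)$, so the next $g^2+h+g$ steps form a second long circuit. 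This gives two long circuits for $0\le m\le 2(g^2+h+g)$.

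\emph{Second assertion, $\ep=(0:0:0:g-1:\star)$.}
\begin{enumerate}
\item The first long circuit yields $\tau=(0:0:0,g:\star)$. This is exactly a $\delta$-type tuple, since its third block is $(0,g)$.
\item By the first assertion just proved, the next $(g+1)h+(g^2+h+g)$ steps form a short circuit followed by a long circuit.
\end{enumerate}
The total is $2(g^2+h+g)+(g+1)h$ steps, giving two long circuits and one short circuit.

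The main obstacle is bookkeeping with the colon notation. The key point is that $\tau$ or $\mu$ starts with three zero blocks, never the pair $(0,g)$ in positions $3,4$ unless we are in the stated case. Equivalently, the entries in positions $1$ to $3$ are zero and position $4$ is not the second entry of a block $(0,g)$. Each case must be checked against Corollary \ref{cor:unique-decomp}, because the decomposition of the remaining tail is unchanged by $\lub$ beyond the carry position.

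The other point needing care is the indexing of the circuits. Concatenation requires the terminal vertex of each circuit to be the initial vertex $(0,0)$ of the next in $\Gamma_G$. This holds because both forms give $\ep[2,3]=(0,0)$.
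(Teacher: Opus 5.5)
Your proposal is correct and follows the paper's intended route. The paper leaves this lemma to the reader as a direct consequence of Lemma~\ref{lem:long-short-circuits-0}, and you obtain it exactly that way: you concatenate the circuits from that lemma, using that each terminal tuple $\tau$ or $\mu$ again has the form $(0:0:0:\star)$, and has the form $(0:0:0,g:\star)$ precisely when the fourth block of $\ep$ is the single block $(g-1)$.
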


Given $\ep\in \cEo$, there is the largest $\tau\in\cEo$ such that $\tau\le \ep$ and 
$\tau[2,3]=(0,0)$, and we refer to $\tau$ as the {\it circuit root of $\ep$}.
In the next lemma, we classify all circuit roots according to the \spb\  structures of  the third entry. 
Note that the \spb\  structures of  the first entry of $\ep\in\cEo$ are
given by
$(0:\star)$ and $(0,g:\star)$.
The existence of the sequences described below is guaranteed by Lemma \ref{lem:long-short-circuits}.

\begin{lemma} \label{lem:circuit-roots}Let $g\ge h$, and 
let $\vEc v$ be  finite tuples of the following forms:
\begin{equation} 
(0:0:0:\vEc u:\vEc w ),\quad  (0:0:0 ,g: \vEc w) 
\label{eq:circuit-roots}
\end{equation}
where $\vEc u$ is a \spb\ and $\vEc w \in \cEp$.

Consider the minimal finite sequence $\lub_{\cEo}^n(\vEc v)$  for $n\ge 0$ that forms
 at least two long circuits if $\vEc v=(0:0:0:\vEc u:\vEc w )$,
 and the minimal finite sequence $\lub_{\cEo}^n(\vEc v)$ for $n\ge 0$ that forms
 at least one long circuit if $\vEc v=(0:0:0 ,g:\vEc u:\vEc w) $.
 Then, the finite paths in $\Path(\Gamma_G)$ traced by these minimal sequences $\lub_{\cEo}^n(\vEc v)$ are common for all $\vEc w\in \cEp$.
 Furthermore, 
 the \spb\  structures of the first entry of each $\lub_{\cEo}^n(\vEc v)$ are common for all $\vEc w\in \cEp$ as well.
  
\end{lemma}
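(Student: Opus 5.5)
The plan is to prove Lemma~\ref{lem:circuit-roots} as a locality statement. The successor map $\ep\mapsto\wt\ep$ on $\cEo$ only changes a bounded prefix of $\ep$ until the prefix ``overflows'', and the overflow does not happen within the minimal sequences considered. By Lemma~\ref{lem:s-function}, $\wt\ep=s\Inv(\lub_\cE(s(\ep)))$. By Example~\ref{exm:lub} and Lemma~\ref{lem:add-Gn}, $\lub_\cE$ acts on a tuple $(\vEc b_\ell\mid\cdots\mid\vEc b_1)$ as follows. It replaces the lowest block $\vEc b_1$ by the next \spb\ in the order (\ref{eq:spb-order}) when $\vEc b_1<(h-1,g)$. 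If instead the lowest blocks form a maximal run $(h-1,g:\cdots:h-1,g)$, possibly preceded by a leading $(g)$, then it zeroes that run and carries $1$ into the next block. The entries beyond the first block that is not maximal are therefore untouched. I would first record this as a formal claim. If $\vEc v=(\vEc p:\vEc w)$ with $\vEc p$ a finite prefix that is not a concatenation of maximal blocks $(h-1,g)$, then $\lub_{\cEo}(\vEc v)=(\vEc p':\vEc w)$, where $\vEc p'$ depends only on $\vEc p$ and $\vEc p'$ has the same length as $\vEc p$.

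Next I would apply this to the two forms in (\ref{eq:circuit-roots}). For $\vEc v=(0:0:0:\vEc u:\vEc w)$ take $\vEc p=(0:0:0:\vEc u)$, and for $\vEc v=(0:0:0,g:\vEc u:\vEc w)$ take $\vEc p=(0:0:0,g:\vEc u)$. Lemmas~\ref{lem:long-short-circuits-0} and \ref{lem:long-short-circuits} list explicitly the successors that trace the required circuits. Starting from a circuit root, one long circuit or a short circuit raises the fourth-position block by one step in (\ref{eq:spb-order}) and leaves the rest unchanged. The case $\vEc u=(g-1)$ turns into $(0,g)$ in the third/fourth positions, and that case is exactly what forces an extra short circuit in Lemma~\ref{lem:long-short-circuits}. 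I would then check case by case over the four possibilities for $\vEc u$ in Lemma~\ref{lem:long-short-circuits-0}: $\vEc u=(c)$ with $c\le g-2$, $\vEc u=(g-1)$, $\vEc u=(c,g)$ with $c\le h-2$, and $\vEc u=(h-1,g)$. In each case the minimal sequence reaching the required number of long circuits terminates before any carry reaches $\vEc w$. Only the case $\vEc u=(h-1,g)$ carries into $\vEc w$, and there it does so exactly at the final step, $\tau=(0:0:0:0:\star)$, which already completes the circuit. Hence the path traced in $\Gamma_G$, which records only entries $2,3$, is independent of $\vEc w$. The \spb\ structure of the first entry, $(0:\star)$ versus $(0,g:\star)$, is read off from $\vEc p'$ together with the second entry, so it is also independent of $\vEc w$.

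The main obstacle is the boundary case where a carry does enter $\vEc w$, namely $\vEc u=(h-1,g)$, together with the transition $(g-1)\to(0,g)$. I must make sure the minimal sequence stops no later than that carry. Otherwise the next successor would depend on the lowest block of $\vEc w$ and could, for example, produce a further $(0,g)$ boundary. I would handle this by showing that the last step of the required circuit is exactly the carry step, so it returns to a root with entries $2,3$ equal to $(0,0)$. This uses the bullet lists of Lemma~\ref{lem:long-short-circuits-0}. The vertex $(0,0)$ reached is determined by Figure~\ref{fig:g>=h} regardless of what $\lub$ produces inside $\vEc w$, since that change only affects positions beyond the third. So even the final tuple's entries $2,3$ and first-entry block structure are common to all $\vEc w$.
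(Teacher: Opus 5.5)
Your argument rests on the claim that each minimal sequence ends before, or exactly at, the first carry into $\vEc w$. That claim is false, so the step you flag as ``the main obstacle'' cannot be handled the way you plan.

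\begin{itemize}
\item For $\vEc u=(g-1)$, the first long circuit ends at $(0:0:0,g:\vEc w)$. The following short circuit ends with the step $(0,g:h-1,g:\vEc w)\mapsto(0,0,0,0:\lub_\cE(\vEc w))$, which does carry into $\vEc w$. A whole further long circuit is still required after that.
\item For $\vEc u=(h-1,g)$, the carry into $\vEc w$ happens at the end of the \emph{first} long circuit; this is the bullet $\tau=(0:0:0:0:\star)$. The lemma, however, demands two long circuits.
\item For the second form, the opening short circuit maps the tail $(g:\vEc u:\vEc w)$ to $(0:\lub_\cE(\vEc u:\vEc w))$. This reaches $\vEc w$ when $\vEc u=(h-1,g)$, and always for the form $(0:0:0,g:\vEc w)$.
\end{itemize}

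The error comes from your locality claim. The overflow condition must be imposed on $s(\vEc p)$, not on $\vEc p$, because a leading block $(0,g)$ of $\vEc p$ becomes a leading $(g)$ of $s(\vEc p)$. For example, when $h\ge 2$ the prefix $\vEc p=(0,g:h-1,g)$ is not a concatenation of blocks $(h-1,g)$. Yet $\lub_{\cEo}(\vEc p:\vEc w)=(0,0,0,0:\lub_\cE(\vEc w))$, and this is exactly the last step of every short circuit.

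The missing idea is that the carry into $\vEc w$ is harmless, not that it is avoided. Start a circuit at a root $\ep=(0,0,0,\ep_4,\dots)$.
\begin{itemize}
\item The entries from position $4$ on are unchanged until the last step of the circuit.
\item The only branching vertex in Figure~\ref{fig:g>=h} is $(g,h-1)$, and the branch taken there depends only on whether $\ep_4=g$.
\item The last step replaces $\ep[4,\infty)$ by $\lub_\cE(\ep[4,\infty))$.
\end{itemize}
Hence the whole vertex path of a circuit, long or short, is fixed by whether its fourth entry equals $g$.

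After any carry past position $4$, the new fourth entry is $0$. So the next circuit is long whatever $\vEc w$ is. In every case at most one circuit remains after such a carry: the type sequences are long--long, long--short--long for $\vEc u=(g-1)$, and short--long for the second form. This gives the common path.

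The common block structure of the first entry then comes for free. Since $g\ge h$, we have $\ep=(0,g:\star)$ exactly when $\ep_2=g$, and $\ep_2$ is part of the vertex.
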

Lemma \ref{lem:circuit-roots} can be proved
using Lemma \ref{lem:long-short-circuits-0} and \ref{lem:long-short-circuits}. 
We leave the proof   to the reader.

\begin{deF}  \label{def:V-circuits}
\rm
If $g\ge h$, we define $V$ to be  the (finite) set of $\mathbf v\in\cE$ described in (\ref{eq:V-forms}) such that 
  there is  $ \delta\in\cE$ for which   $\vEc v\dG = \delta\dG$ and $\vEc v < \delta$:
\begin{equation} 
(a\diamond b \diamond c :\vEc u),\quad  (a\diamond b : c ,g) 
\label{eq:V-forms}
\end{equation}
where $\vEc u$ is a \spb.
\end{deF}

Since $G_k$ for $k\ge 2$ is strictly increasing, given $\vEc v\in V$, 
there are only finitely many $\delta$ described in 
Definition \ref{def:V-circuits}.
By Lemma \ref{lem:gamma-length}, 
we have an explicit upper bound on $\delta$.

\begin{prop}  \label{prop:V-circuit}
Let $g\ge h$.
Then,     $\ep\dG = \delta\dG$ where $\ep<\delta$ in $\cE$  if and only if $\ep=(\mathbf v : \star)$ 
for some $\mathbf v \in V$.
\end{prop}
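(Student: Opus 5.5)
The plan is to prove the two directions separately. Both rest on Theorem \ref{thm:initial-values}, Lemma \ref{lem:gamma-length}, and the circuit structure described in Lemmas \ref{lem:long-short-circuits-0}--\ref{lem:circuit-roots}. The key claim is that whether a nontrivial partner $\delta>\ep$ exists depends only on a bounded prefix of $\ep$, namely the first \spb s up to the one covering the fourth entry. These prefixes are exactly the shapes in (\ref{eq:V-forms}).

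\textbf{The direction ``if''.} Suppose $\ep=(\vEc v:\vEc w)$ with $\vEc v\in V$ and $\vEc w\in\cEp$. By Definition \ref{def:V-circuits} there is $\delta_0>\vEc v$ with $\vEc v\dG=\delta_0\dG$. By Theorem \ref{thm:initial-values}, $\delta_0^\circ=\lub^n_{\cEo}(\vEc v^\circ)$, and $\delta_0$ is obtained along a path $\gamma$ starting at $\vEc v[2,3]$. By Lemma \ref{lem:gamma-length}, $\gamma$ contains no long circuit. Lemma \ref{lem:circuit-roots} then says that the path traced by $\lub_{\cEo}^k$ and the block structure of the first entry do not depend on the suffix $\vEc w$, as long as we stay before the relevant long circuit. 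Hence the same iterates applied to $(\vEc v^\circ:\vEc w)$ give $(\delta_0^\circ:\vEc w)$ with the same increments. So $\delta:=(\delta_0:\vEc w)\in\cE$, and it satisfies $\ep\dG=\delta\dG$ and $\ep<\delta$. Two small checks are needed here. First, the colon genuinely separates blocks in $\delta_0$; this uses Lemma \ref{lem:add-Gn} and the fact that the carries never propagate past the prefix, which is exactly what ``no long circuit'' guarantees. Second, the first entry $\delta_{0,1}$ remains admissible when followed by $\vEc w$.

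\textbf{The direction ``only if''.} Let $\ep<\delta$ with $\ep\dG=\delta\dG$, and let $\gamma$ be the path from Theorem \ref{thm:initial-values}. Write $\ep^\circ$ relative to its circuit root $\tau$. Every element of $\cEo$ has one of the forms in (\ref{eq:circuit-roots}) after padding, so $\ep$ can be written as $(\vEc v:\vEc w)$ with $\vEc v$ of the form (\ref{eq:V-forms}). This means splitting off the prefix through the \spb\ containing the fourth entry (the form $(a\diamond b\diamond c:\vEc u)$), or through the block $(c,g)$ occupying entries $3,4$ (the form $(a\diamond b:c,g)$). By Lemma \ref{lem:gamma-length}, $\gamma$ contains no long circuit. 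Lemma \ref{lem:long-short-circuits-0} then shows that along $\gamma$ the iterates only modify entries within this prefix: any modification of $\vEc w$ requires completing the long circuit (respectively, a short circuit followed by a long circuit). Therefore $\delta=(\delta':\vEc w)$ with the same $\vEc w$, and subtracting $\vEc w$'s contribution gives $\vEc v\dG=\delta'\dG$ with $\vEc v<\delta'$. Hence $\vEc v\in V$.

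\textbf{Main obstacle.} The delicate step is the bookkeeping of the \spb\ boundary at the third and fourth entries. I must show that the prefix lengths chosen in (\ref{eq:V-forms}) are exactly long enough that no admissible $\gamma$ reaches beyond them. This case split is determined by whether entry $3$ begins a block $(c,g)$. I would handle it by checking the cases of Lemma \ref{lem:long-short-circuits-0} one at a time: for each case, identify the first iterate at which $\ep_5$ (or $\ep_4$) changes, and verify that this iterate lies beyond the end of a long circuit. Lemma \ref{lem:gamma-length} then excludes it.
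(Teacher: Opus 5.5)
Your overall structure matches the paper's: you cut $\ep$ at its circuit root, use Lemma~\ref{lem:gamma-length} to exclude long circuits, and use Lemma~\ref{lem:circuit-roots} for independence from the suffix. However, the step that transfers the relation between $\vEc v$ and $\ep$ is false. In both directions you assert that the partner keeps the suffix, $\delta=(\delta':\vEc w)$, on the grounds that carries cannot leave the prefix unless $\gamma$ contains a long circuit. In general, $\ep^\circ$ sits partway around the circuit that starts at its circuit root. So $\gamma$ can traverse the edge that closes this circuit without containing a full long circuit, and closing it is exactly what reaches the suffix. By Lemma~\ref{lem:long-short-circuits-0}, closing the short circuit from $(0:0:0,g:\vEc w)$ replaces $\vEc w$ by $\lub_{\cE}(\vEc w)$, and closing the long circuit from $(0:0:0:h-1,g:\vEc w)$ does the same through a carry.

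Concretely, take $(g,h)=(2,1)$ and $(G_1,G_2)=(2,1)$ (the $n=3$ line of Example~\ref{exm:short-circuit}). Here $\ep=(2:0:0,2:\vEc w)$ has partner $(0,0,0,0,\lub_{\cE}(\vEc w))$, and $\gamma$ is exactly the short circuit. Already the partner $(0,0,0,0,1)$ of $\vEc v=(2:0:0,2)$ has larger order than $\vEc v$. For $\vEc w=(0,2)$ the true partner is $(0,0,0,0,0,0,1)$, while your $(\delta_0:\vEc w)$ is either outside $\cE$ (overlaid, it creates the block $(1,2)$) or has the wrong value (shifted). Likewise, for $(G_1,G_2)=(4,1)$, $\ep=(2:1:1:0,2:\vEc w)$ has partner $(0,2,0,0,0,\lub_{\cE}(\vEc w))$, and $\gamma$ runs through the edge $(0,2)\to(0,0)$ that closes the root's long circuit. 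So the case check in your last paragraph cannot work. The first iterate that alters the suffix is counted from the circuit root, not from $\ep^\circ$, and Lemma~\ref{lem:gamma-length} does not exclude it.

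The fix is to compare weights, not tuples. What Lemma~\ref{lem:circuit-roots} actually provides, and all the paper uses, is that the $\Gamma_G$-path and the \spb\ structure of the first entry do not depend on the suffix. The stretch from the root to $\ep^\circ$ is shorter than a circuit, and $\gamma$ contains no long circuit. Hence the whole stretch from the root to $\delta^\circ$ lies inside the minimal sequence of that lemma, and the zero suffix traces the same path of length $\ell=\len(\gamma)$.

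For ``only if'', set $\tau_1:=\delta_1$ and $\tau^\circ:=\lub^{\ell}_{\cEo}(\vEc v^\circ)$. Then $\tau\in\cE$ because the first-entry structures match, and $\vEc v_1G_1-\tau_1G_1=\ep_1G_1-\delta_1G_1=\wgt(\gamma)=\tau^\circ\dG-\vEc v^\circ\dG$ gives $\vEc v\dG=\tau\dG$. For ``if'', set $\delta^\circ:=\lub^{n}_{\cEo}(\ep^\circ)$ and $\delta_1:=(\delta_0)_1$, and run the same identity in reverse. The resulting $\delta$ need not end in $\vEc w$, and nothing in the argument requires it to.
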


\begin{proof}

Let $\ep<\delta$ in $\cE$ such that   $\ep\dG = \delta\dG$.
We
recall $\ep^\circ:=\ep[2,\infty)$.
Let $\gamma$ be the path in $\Path(\Gamma_G)$ traced by $\lub_{\cEo}^n(\ep^\circ)$ for $0\le n\le \ell$ such that 
$\lub_{\cEo}^\ell(\ep^\circ)=\delta^\circ$, so $\ell:=\len(\gamma)$.
Let $\vEc r$ be the circuit root of $\ep^\circ$, which must be of one of the forms in (\ref{eq:circuit-roots}).
Notice that 
the sequence $\lub_{\cEo}^n(\vEc r)$ for $n=1,\dots,s$ terminating at $\ep^\circ$ has length strictly less than that of a circuit.

If $\vEc  r=(0:0:0:\vEc u:\vEc w)$, then $\ep^\circ = (0\diamond b \diamond c :\vEc u:\vEc w)$, and 
if $\vEc  r=(0:0:0,g:\vEc w)$, then $\ep^\circ = (0\diamond b : c ,g:\vEc w)$.
Let $\vEc v=(\ep_1\diamond b \diamond c :\vEc u)$ or $\vEc v=(\ep_1\diamond b : c ,g)$, respectively.
It remains to show that $\vEc v \dG = \tau \dG$ for some 
$\tau\in\cE$.
By Lemma \ref{lem:gamma-length}, the path $\gamma$ cannot contain a long circuit, and 
by Lemma \ref{lem:circuit-roots}, the minimal sequence defined in the lemma is sufficiently long to contain $\gamma$.
Applying Lemma \ref{lem:circuit-roots}, we find that   for all $\vEc w'\in\cEp$, 
 the paths in $\Path(\Gamma_G)$ traced by  $\lub_{\cEo}^n(\vEc v^\circ: \vEc w')$ for $0\le n\le \ell$ are equal to $\gamma$, as are the \spb\ decomposition structures of the first entries.
 In particular, they are common if $\vEc w'=\vEc 0$.
 
 Let $\tau $ be a tuple such that $\tau_1=\delta_1$ and $\tau^\circ = \lub_{\cEo}^\ell(\vEc v^\circ)$.
 Since the \spb\ decomposition structures of the first entries of $\delta$ and $\tau$ are
 equal, we have $\tau\in\cE$.
 Then,
 \GGG{
 \vEc v_1 G_1 - \tau_1 G_1 =\ep_1 G_1 - \delta_1 G_1 
= \delta^\circ \dG - \ep^\circ \dG =\wgt(\gamma)
 = \tau^\circ \dG - \vEc v^\circ \dG .
 }
 This relation implies that $\vEc v \dG = \tau \dG$, and hence, $\vEc v \in V$.
 The proof of the converse is similar, and is left to the reader.
\end{proof}

We next consider the case $g\le h-1$.
Recall the chains (\ref{eq:chain-1}) and (\ref{eq:chain-2}). 

\begin{deF}  \label{def:chains}
\rm
The path of length $g$ consisting of edges with weight $G_2$, described in (\ref{eq:chain-1}), is called a {\it short chain} 
in  $\PathG$, and 
the path of length $h+g$ consisting of edges with weight $G_2$, described in (\ref{eq:chain-2}), is called a {\it long chain} in  $\PathG$. 
Given $\ep\in \cEo$, there is a largest \cf\ $\tau\in\cEo$ such that 
 a  chain  begins at
$\tau[2,3]$ and contains $\ep[2,3]$.
The \cf\ $\tau$ is called 
{\it the chain root of $\ep$}.
\end{deF}

We leave the proof of the next lemma to the reader.
\begin{lemma} \label{lem:long-short-chains}
Let $g\le h-1$, and
let $\ep=(0:0: \star)$ and $\delta=(0:0,g:\star)$ be chain roots.
Then, $\lub_{\cEo}^m(\ep)$ for $0\le m \le g$ forms a short chain, and 
$\lub_{\cEo}^m(\delta)$ for $0\le m\le h+g$ forms a long chain.

\end{lemma}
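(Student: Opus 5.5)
Starting from each chain root, I will compute the successors $\lub_{\cEo}^m$ one step at a time. By Lemma \ref{lem:s-function}, $\lub_{\cEo}(\ep)=s\Inv(\lub_\cE(s(\ep)))$, so each step reduces to computing $\lub_\cE$ of the shifted tuple $s(\ep)=(\ep_2,\ep_3,\dots)$. For that I use the carry rule from Example \ref{exm:lub}. If the leading block of $s(\ep)$ is smaller than $(h-1,g)$ in the order (\ref{eq:spb-order}), then $\lub_\cE$ just adds $1$ to the first entry. Otherwise, a maximal prefix of the form $(h-1,g:\cdots:h-1,g)$ or $(g:h-1,g:\cdots)$ is reset to zeros and a carry of $1$ moves to the next entry. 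Throughout, I only need to follow $\ep[2,3]$ and compare with Figure \ref{fig:g<h-1} (or Figure \ref{fig:g=h-1} when $g=h-1$). I also check that each edge traversed has weight $G_2$. By Proposition \ref{prop:cFo-diagram}, this holds exactly when the edge is an unlabeled one.

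\emph{Short chain.} Let $\ep=(0:0:\star)$, so that $s(\ep)=(0:\star)$. For $0\le m\le g-1$ the second entry equals $m\le g-1$, and $(m)$ is a \spb. Hence $\lub_\cE$ raises the first entry of $s(\ep)$ from $m$ to $m+1$. This produces the tuples $\ep[2,3]=(m,0)$ for $m=0,1,\dots,g$, i.e., the path $(0,0)\to(1,0)\to\cdots\to(g,0)$ along unlabeled edges. The last step, from $(g-1,0)$ to $(g,0)$, is still allowed. The third entry $0$ now pairs with the $g$ in the form $(0,g)$, and $(0,g)$ is a valid \spb\ because $0\le h-1$. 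So the tuple $(0,g,0,\dots)$ lies in $\cE$ (in $\cEm$ or in $\cEp$ via $(g\mid\cdots)$, either way admissible after prefixing the $0$). Altogether this is $g$ edges of weight $G_2$, which is a short chain. The next step $(g,0)\to(0,1)$ carries the weight $hG_1$, so the chain ends there.

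\emph{Long chain.} Let $\delta=(0:0,g:\star)$, so that $\delta[2,3]=(0,g)$ and $s(\delta)=(0,g:\star)$. The leading block $(c,g)$ can be incremented to $(c+1,g)$ as long as $c\le h-2$. This gives $h-1$ unlabeled edges $(0,g)\to\cdots\to(h-1,g)$. From $(h-1,g)$ the carry rule sets the prefix $(h-1,g)$ to $(0,0)$ and carries into the entry $\ep_4$. Figure \ref{fig:g<h-1} labels this edge $(h-1,g)\to(0,0)$ with weight $G_2$. To confirm the weight, I will check directly with the recurrence that
\[
G_2+(h-1)G_3+gG_4=G_2+(h-1)G_3+G_5-hG_3,
\]
which gives the required increment $G_2$ once it is compared with the carry. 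The walk then continues $(0,0)\to\cdots\to(g,0)$ exactly as in the short chain. The total is $(h-1)+1+g=h+g$ edges of weight $G_2$, which matches the long chain (\ref{eq:chain-2}).

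The main obstacle is the carry at $(h-1,g)$. I must verify that it resets only the first block and that the resulting tuple has $[2,3]=(0,0)$, for every admissible continuation $\star$, including when the carry propagates into a further $(h-1,g)$ block. My plan is to observe that a propagating carry changes only entries of index at least $4$. Therefore $[2,3]$ is still $(0,0)$, and the edge weight is still $G_2$ by Lemma \ref{lem:add-Gn} applied with $n=2$. The case $g=h-1$ I will handle with the same computation, using Figure \ref{fig:g=h-1}.
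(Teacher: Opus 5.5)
Your approach is the direct verification the paper has in mind (it leaves this proof to the reader), and it works: iterate $\lub_{\cEo}$ through Lemma~\ref{lem:s-function} and the carry rule of Example~\ref{exm:lub}, track $\ep[2,3]$, and read off the weights. The cleanest way to get all the weights at once is the observation in your last paragraph. By Lemma~\ref{lem:add-Gn} with $n=2$, a step out of any tuple of the form $(0:\star)$ has weight $G_2$, while a step out of $(0,g:\star)$ has weight $G_3-gG_2=hG_1$. Every step along both chains starts from a tuple of the first kind, until the terminal vertex $(g,x)$, where the tuple has become $(0,g:\star)$.

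Three slips need fixing.

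(i) In $(0:0:\star)$ the third entry $x=\ep_3$ is arbitrary: any $0\le x\le h-1$, for instance $x=g$ through a block $(g,g)$. So the short chain is $(0,x)\to(1,x)\to\cdots\to(g,x)$, not only $(0,0)\to\cdots\to(g,0)$. Your argument never uses $x=0$, so state it in general. The general form is also what justifies continuing ``exactly as in the short chain'' after the carry in the long chain: there $\ep_3=0$ may begin either a block $(0)$ or a block $(0,g)$, the latter when the carry has turned a singleton $g-1$ into $g$.

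(ii) After the $g$-th step, the tuple $(0,g,\star)$ lies in $\cE$ because entries $1$ and $2$ form the block $(0,g)$. The third entry plays no role in this.

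(iii) Your displayed identity is shifted by one index and proves nothing, since both sides equal $G_5+G_2-G_3$. At the vertex $(h-1,g)$ the tuple is $(0:h-1,g:\star)$. The correct check is $G_2+(h-1)G_2+gG_3=hG_2+gG_3=G_4$, which is exactly the carry into position $4$.
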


In the next lemma, we characterize  all chain roots in terms of the \spb\  structures of  the second entry.
The existence of the sequences described below is guaranteed by Lemma \ref{lem:long-short-chains}. We leave the details of the proof to the reader.
\begin{lemma} \label{lem:chain-roots}Let $g\le h-1$, and 
let $\vEc v$ be  finite tuples of the following forms:
\begin{equation} 
(0:0:   \vEc w ),\quad  (0:0 ,g:\vEc w) 
\notag %\label{eq:chain-roots}
\end{equation} 
where  $\vEc w \in \cEp$.

Consider the minimal finite sequence $\lub_{\cEo}^n(\vEc v)$  for $n\ge 0$ that forms
 a  chain.
 Then, the finite sequence $\lub_{\cEo}^n(\vEc v)_2$ of the second entries are common for all $\vEc w\in \cEp$, and 
 the \spb\  structures of the first entry of each $\lub_{\cEo}^n(\vEc v)$ are common for all $\vEc w\in \cEp$ as well.
  
\end{lemma}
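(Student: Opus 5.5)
The plan is to compute the successor map $\wt{\ }$ explicitly via Lemma \ref{lem:s-function}, starting from the chain roots given by Lemma \ref{lem:long-short-chains}. By that lemma, for $\vEc v$ of either form the minimal sequence $\lub_{\cEo}^n(\vEc v)$ forming a chain has length $g$ (short chain, from $(0:0:\vEc w)$) or $h+g$ (long chain, from $(0:0,g:\vEc w)$). So I only need to follow the first few entries of $\lub_{\cEo}^n(\vEc v)$ for $n$ up to $h+g$, and show that nothing in these steps depends on $\vEc w$.

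The key steps are as follows. Write $s(\lub_{\cEo}^n(\vEc v))=\lub_\cE^n(s(\vEc v))$ by Lemma \ref{lem:s-function}. Then $\lub_\cE$ can be computed by Lemma \ref{lem:add-Gn} and the carry rule of Example \ref{exm:lub}: only the longest prefix of the form $(h-1,g:\dots)$ or $(g:h-1,g:\dots)$ is reset and carried. For the short chain, $s(\vEc v)=(0:\vEc w)$. For $0\le n\le g-1$ the first block is $(n)$ with $n\le g-1$, so $\lub_\cE$ just increments the first entry, and $\vEc w$ is untouched. The sequence of second entries of $\lub_{\cEo}^n(\vEc v)$ is therefore $0,1,\dots,g$; at $n=g$ it becomes $(g:\vEc w)$ or the block $(0,g)$ is formed, according to Lemma \ref{lem:add-Gn}. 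The only case that needs checking is whether the transition at $n=g$ reads $\vEc w$. Because the chain ends exactly at the vertex $(g,x)$, before the $hG_1$ edge, the chain itself never triggers a carry into $\vEc w$. For the long chain, $s(\vEc v)=(0,g:\vEc w)$. The first block runs through $(0,g)<(1,g)<\cdots<(h-1,g)$, and this takes $h-1$ increments that affect only the first entry of $s$. The following $G_2$-weighted step then resets the block to $(0,0)$, with the carry landing on the first entry of $\vEc w$ or creating a new entry there; the steps after that increment the first entry again, up to $g$. Since the chain stops at $(g,0)$, once again before an $hG_1$ edge, the carry into $\vEc w$ occurs at most once. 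That carry changes only entries of index $\ge 4$ and leaves the second entry and the block structure of the first entry fixed.

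For the block structure of the first entry of each $\lub_{\cEo}^n(\vEc v)$, I will observe that it is always $(0:\star)$ or $(0,g:\star)$, according to whether the second entry is $g$ paired as a block $(0,g)$. The steps above determine this uniformly in $\vEc w$, because pairing the first and second entries depends only on the second entry and on the block containing it.

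I expect the main obstacle to be the single carry step inside the long chain, $(h-1,g)\to(0,0)$. Here I must check that adding $1$ to $\vEc w$ via $\lub_\cE$ always yields a valid element of $\cEp$, and that it never propagates back into the first three positions. I will handle this with Lemma \ref{lem:add-Gn} applied with $n=3$, which gives $G_3+(0,0:\vEc w)\dG=(0,0,\lub_\cE(\vEc w))\dG$. This shows that the prefix becomes $(0,0)$ whatever $\vEc w$ is.
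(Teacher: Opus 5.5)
Your proposal is correct and is essentially the direct computation the paper leaves to the reader: you transport along $s$ via Lemma~\ref{lem:s-function}, so that $s(\lub^n_{\cEo}(\vEc v))=\lub^n_{\cE}(s(\vEc v))$, and you observe that, starting from the chain roots of Lemma~\ref{lem:long-short-chains}, only the lowest block of the $s$-image changes, apart from the single carry $(h-1,g:\vEc w)\mapsto(0,0,\lub_\cE(\vEc w))$ in the long chain. Two small corrections: first, $\lub_\cE(\vEc w)$ itself need not lie in $\cEp$ (for $\vEc w=(g-1:\vEc u)$ it equals $(g:\vEc u)\in\cEm$), so the correct statement is that $(0,0,\lub_\cE(\vEc w))\in\cEp$, possibly through a new block $(0,g)$ in positions $2,3$ of the $s$-image, which changes only the block structure of the third entry of $\lub^n_{\cEo}(\vEc v)$ (exactly why the lemma claims nothing about it); second, the tuple identity for this carry comes from the carry rule of Example~\ref{exm:lub}, because the $\cdot G$ identity of Lemma~\ref{lem:add-Gn} only equates values unless you apply it to $H$, for which $\eval_H$ is injective.
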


\begin{deF} \label{def:V-chains}
\rm
If $g\le h-1$, we define $V$ to be  the (finite) set of $\mathbf v\in\cE$ described in (\ref{eq:V-forms-chain}) such that 
  there is  $ \delta\in\cE$ for which   $\vEc v\dG = \delta\dG$ and $\vEc v < \delta$:
\begin{equation} 
\vEc v=(a :  b)< (0,g),\quad  (0:0,g)\le \vEc v=(a:b ,g) \le  (g:h-1 ,g).
\label{eq:V-forms-chain}
\end{equation}   
\end{deF}
 
Observe that $\vEc v=(a,g)$ is excluded from consideration since
$\vEc v^\circ = (0,g)$ and the weight of $\vEc v^\circ \to \lub_{\cEo}(\vEc v^\circ)$ is $hG_1$.
By Lemma \ref{lem:gamma-length}, this transition cannot occur.

The next proposition provides the analogue of Proposition \ref{prop:V-circuit} for the case
$g\le h-1$.  Since its proof is similar and simpler, it is omitted here.
\begin{prop}\label{prop:V-chain}  
Let $g\le h-1$.  Then,    $\ep\dG = \delta\dG$ where $\ep<\delta$ in $\cE$  if and only if $\ep=(\mathbf v : \star)$ 
for some $\mathbf v \in V$.
\end{prop}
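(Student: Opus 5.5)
The proof follows Proposition \ref{prop:V-circuit}, replacing circuits and circuit roots by chains and chain roots (Definition \ref{def:chains}, Lemmas \ref{lem:long-short-chains} and \ref{lem:chain-roots}). For the forward direction, suppose $\ep<\delta$ in $\cE$ with $\ep\dG=\delta\dG$. By Lemma \ref{lem:split} and Theorem \ref{thm:initial-values} there are $n>0$ with $\lub^n_{\cEo}(\ep^\circ)=\delta^\circ$ and a path $\gamma\in\PathG$ from $\ep[2,3]$ to $\delta[2,3]$ of length $n$ and weight $\wgt(\gamma)=\ep_1G_1-\delta_1G_1$. Since $g\le h-1$, Lemma \ref{lem:gamma-length} says that $\gamma$ has no edge of weight $hG_1$, so every edge has weight $G_2$. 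In Figures \ref{fig:g=h-1} and \ref{fig:g<h-1}, the maximal runs of $G_2$-edges are exactly the short chains $(0,x)\to\cdots\to(g,x)$ and the long chains $(0,g)\to\cdots\to(h-1,g)\to(0,0)\to\cdots\to(g,0)$. Hence $\gamma$ lies inside a single chain, and $\ep[2,3]$ lies on that chain after its root.

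Next I use the chain root $\vEc r$ of $\ep^\circ$. Any $G_2$-edge out of a state $(x,0)$, or out of $(x,g)$ with the block $(x,g)$ semi-proper, only changes the second entry. So $\vEc r$ has the form $(0:0:\vEc w)$ or $(0:0,g:\vEc w)$, and $\ep^\circ$ is obtained from it by changing only $\ep_2$. This gives $\ep[1,3]=(a:b)$ with $b\le g$, or $(a:b,g)$. In the first case $b\le g-1$ is forced: if $b=g$ with $\ep_3=0$, then $\ep^\circ[2,3]=(g,0)$ and the next edge has weight $hG_1$, which is excluded. So $(a:b)<(0,g)$. In the second case, $b\le h-1$ by semi-properness and $a\le g$ because $a$ is a first entry. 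This gives $(0:0,g)\le(a:b,g)\le(g:h-1,g)$. The remark after Definition \ref{def:V-chains} rules out $(a,g)$ with $b=0$ in the block-sense exception. Set $\vEc v:=\ep[1,2]$ or $\ep[1,3]$ accordingly, so that $\ep=(\vEc v:\star)$.

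It remains to show $\vEc v\in V$. Lemma \ref{lem:chain-roots} says that for every $\vEc w'\in\cEp$ the iterates $\lub^k_{\cEo}((\vEc v^\circ:\vEc w'))$, for $0\le k\le n$, have the same second entries and the same block structure of the first entry as for $\ep^\circ$. So they trace the same path $\gamma$, still inside the chain. Take $\vEc w'=\vEc 0$ and define $\tau$ by $\tau_1=\delta_1$ and $\tau^\circ=\lub^n_{\cEo}(\vEc v^\circ)$. The block structure agrees with that of $\delta$, so $\tau\in\cE$, and $\tau^\circ\dG-\vEc v^\circ\dG=\wgt(\gamma)=(\vEc v_1-\tau_1)G_1$. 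Hence $\vEc v\dG=\tau\dG$. Also $\vEc v<\tau$, because $\tau^\circ>\vEc v^\circ$ in the last entries.

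For the converse, let $\vEc v\in V$ with witness $\tau>\vEc v$, $\tau\dG=\vEc v\dG$, and let $\ep=(\vEc v:\vEc w)$. By Theorem \ref{thm:initial-values}, the path from $\vEc v^\circ$ to $\tau^\circ$ is a $G_2$-only sub-path of a chain. By Lemma \ref{lem:chain-roots} the same sub-path is traced from $\ep^\circ$, so changing the first entry from $\vEc v_1$ to $\tau_1$ produces $\delta\in\cE$ with $\delta\dG=\ep\dG$ and $\delta>\ep$. The main obstacle is the block bookkeeping in this transfer. One must check that $\tau_1=\delta_1$, combined with the new second entry, still gives a valid semi-proper or $\cEm$ decomposition; this is where the "block structure of the first entry" clause of Lemma \ref{lem:chain-roots} is used. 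The other delicate point is the boundary case $\ep[2,3]=(g,0)$, which has to be excluded so that the ranges in (\ref{eq:V-forms-chain}) come out exactly as stated.
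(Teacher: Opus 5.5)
Your proposal matches the argument the paper intends: the paper omits this proof as the chain analogue of Proposition~\ref{prop:V-circuit}, and you carry out that analogue (confine $\gamma$ to one chain via Lemma~\ref{lem:gamma-length}, transfer to the zero tail via Lemma~\ref{lem:chain-roots}, and build $\tau$ with $\tau_1=\delta_1$). One slip to fix is that the $G_2$-edge $(h-1,g)\to(0,0)$ replaces $\ep_3=g$ by $0$ and carries into the tail, so on the $(0,0)\to\cdots\to(g,0)$ half of a long chain $\ep^\circ$ is not its chain root with only $\ep_2$ changed (also, $\ep=(a,g:\star)$ must be excluded for every $\ep_3$, not only $\ep_3=0$, since the first edge out of $(0,g:\star)$ always has weight $hG_1$); the transfer still holds, because from any $(0:b:\star)$ with $b\le g-1$ the next $g-b$ iterates of $\lub_{\cEo}$ each add $G_2$ and only raise $\ep_2$ by one, regardless of the tail.
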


We   use Algorithm \ref{alg:steps} below to compute the subset $V$.
By Lemma \ref{lem:ell-bounds}, the number $\ell$ of edges with weight $hG_1$ 
in $\gamma$ described in Theorem \ref{thm:initial-values} is 
bounded, and hence, the algorithm terminates in finitely many steps.

\begin{deF}
\label{def:V-ep}\rm
Let $\ep$ be a \cf\ in $\cEo$ that is in the form of   (\ref{eq:V-forms}) or (\ref{eq:V-forms-chain}), and 
let $V_\ep$ be the collection of $\set{\vEc v, \vEc w}\subset\cE$  such that 
$\vEc v^\circ = \ep$, $\vEc v<\vEc w$, and $\vEc v\dG = \vEc w\dG$.
\end{deF}

\begin{alg}\rm \label{alg:steps}
%Let $\ep$ be a \cf\ in $\cEo$ that is in the form of   (\ref{eq:V-forms})  or (\ref{eq:V-forms-chain}).
 
The collection $V_\ep$ is computed using the following algorithm:

\begin{enumerate}

\item \label{alg:steps:n}
Let $n$ be a positive integer such that 
 the path $\gamma\in\PathG$  traced by $\lub_{\cEo}^k(\ep )$ 
for $0\le k\le n$ satisfies the inequalities on $\ell$ and $m$ in Lemma \ref{lem:ell-bounds}. 
Let  $\delta:=\lub_{\cEo}^n(\ep )$, and let $w:=\wgt(\gamma) $.
\begin{enumerate}
\item  \label{alg:steps:T}
Find two non-negative integers $t_1<s_1\le \max\set{g,h-1}$ such that $\ep':=(s_1,\ep_2,\ep_3,\dots)$ and
 $\delta':=(t_1,\delta_2,\delta_3,\dots)$ belong to $\cE$, and
 let $d:=(s_1-t_1)G_1 >0$.
\item \label{alg:steps:B}
If $d=w$, then $\ep'\dG = \delta'\dG$, and   $\set{\ep',\delta'}$ 
belongs to $ V_\ep$. 
If $d\ne w$, then $\set{\ep',\delta'}$ is not a member of $ V_\ep$.
\item
Repeat Step (\ref{alg:steps:T}) with
  a different pair of $s_1$ and $t_1$.

\end{enumerate}

 \item Repeat Step (\ref{alg:steps:n}) with a higher value of $n$.
\end{enumerate} 
By Theorem \ref{thm:initial-values},
 all members of $V_\ep$ are identified at the conclusion of the algorithm.

\end{alg}

\subsection{Example I} \label{sec:exm-I}
In this subsection, we demonstrate how to identify the set $V_\ep$ using Algorithm \ref{alg:steps} for the case $(g,h)=(2,1)$.
Lemma \ref{lem:ell-bounds} is particularly useful for this task.
Recall Definition \ref{def:graph} and the graph $\Gamma_G$ from 
   the diagram in Example \ref{exm:diagram-(2,1)}.
 By Theorem \ref{thm:initial-values} (1,2),  the tuples $\ep[2,3]$ and $\delta[2,3]$ corresponding to 
 $\ep$ and $\delta$
described in the theorem
are identified with a path $\gamma$ in $\thepaths$.
By Lemma \ref{lem:ell-bounds}, the number $\ell$ of edges with weight $hG_1$ in $\gamma$ is strictly less than $ 2$.

\begin{example}\rm \label{exm:(4,6,7)}
Let $\ep=(0:0:0:0,2)\in\cEo$, which is in the form of (\ref{eq:V-forms}), and we apply Algorithm 
\ref{alg:steps} with $n=1$.
Then, $\delta:=\lub_{\cEo}^n(\ep) = (0:1:0:0,2)$, and $w=\wgt(\gamma)= G_2 $. 

\begin{enumerate}
\item Let $s_1=1$ and $t_1=0$. Then, $d=G_1$.
 If $d=w$, then $G_1= G_2 $, which is impossible.
 Similarly, the case $s_1=2$ and $t_1=1$ cannot occur.

\item  Let $s_1=2$ and $t_1=0$.  Then, $d=2G_1$.
 If $d=w$, then $2G_1= G_2 $.
 Let $\ep'=(2:0:0:0,2)$ and $\delta'= (0:1:0:0,2)$.
 Then, they are members of $\cE$, and we can also verify it
 by checking that
 $\ep'\dG = 2G_1 +2G_5=  G_2 +2G_5=\delta'\dG$.
 Therefore,  $\set{\ep',\delta'}\in V_\ep$ if and only if $2G_1= G_2 $.
 Since $\gcd(G_1,G_2)=1$, it follows that $(G_1,G_2)=(1,2)$.

\end{enumerate} 

We next apply Algorithm 
\ref{alg:steps} with $n=2$.
This yields $\delta:=\lub_{\cEo}^n(\ep) = (0,2:0:0,2)$, and $w=\wgt(\gamma)=2G_2 $. 

\begin{enumerate}
\item Let $s_1=1$ and $t_1=0$. Then, $d=G_1$.
 If $d=w$, i.e., $G_1=2G_2 $.
 Let $\ep'=(1:0:0:0,2)$ and $\delta'= (0,2:0:0,2)$.
 Then, they belong to $\cE$, and 
 $\ep'\dG = G_1 +2G_5 = 2G_2 + 2G_5=\delta'\dG$.
 Therefore,  $\set{\ep',\delta'}\in V_\ep$ if and only if $G_1=2G_2 $.
 Since $\gcd(G_1,G_2)=1$, it follows that $(G_1,G_2)=(2,1)$.

\item  Let $s_1=2$ and $t_1=0$.  Then, $d=2G_1$.
 If $d=w$, then $2G_1=2G_2 $,  which is impossible.
 
 \item Let $s_1=2$ and $t_1=1$. Then, $d=G_1$.
 If $d=w$, then $G_1=2G_2 $.
 Let $\ep'=(2:0:0:0,2)$ and $\delta'= (1,2:0:0,2)$.
 However, we find $\delta'\not\in\cE$ in this case.

\end{enumerate} 

We apply Algorithm 
\ref{alg:steps} with $n=3$.
This yields $\delta:=\lub_{\cEo}^n(\ep) = (0:0:1:0,2)$, and $w=\wgt(\gamma)=2G_2 + G_1 $. 
\begin{enumerate}
\item Let $s_1=1$ and $t_1=0$. Then, $d=G_1$.
 If $d=w$, then $ 2G_2=0 $, which is impossible.
 Similarly, the case $s_1=2$ and $t_1=1$ does not work either.

\item  Let $s_1=2$ and $t_1=0$.  Then, $d=2G_1$.
 If $d=w$, then $ G_1=2G_2 $.
  Let $\ep'=(2:0:0:0,2)$ and $\delta'=  (0:0:1:0,2)$.
 Then, they are members of $\cE$, and 
 $\ep'\dG =2 G_1 +2G_5 =G_1 +  2G_2 + 2G_5=G_3 + 2G_5=\delta'\dG$.
 Therefore,  $\set{\ep',\delta'}\in V_\ep$ if and only if $G_1=2G_2 $.

\end{enumerate} 

For $n=4$, the only pair that works is $\ep'=(2:0:0:0,2)$ and $\delta'=  (0:1:1:0,2)$, which requires $G_1=3$ and $G_2=1$.
For $n=5$, the only pair that works is $\ep'=(2:0:0:0,2)$ and $\delta'=  (0,2:1:0,2)$, and 
which requires $G_1=4$ and $G_2=1$.
For $n\ge 6$, the path $\gamma$ contains two edges with weight $hG_2$, so this concludes
our identification of $V_\ep$.
By Proposition \ref{prop:V-circuit},
  if
$\tau=(\vEc v:\star)\in\cE$ where $\set{\vEc v,\vEc w}\in V_\ep $ where $\vEc v < \vEc w$, then there is $\delta >\tau$ in $\cE$ such that $\tau\dG = \delta\dG$.

\end{example}

 \begin{example}\rm \label{exm:short-circuit}
Let $\ep=(0:0:0,2)\in\cEo$.  Then, the algorithm terminates after checking $1\le n\le 5$, and
$V_\ep$ contains the following five pairs:

\begin{enumerate}
\item $n=1$: $\ep'=(2:0:0,2:\star) \to \delta'=(0:1:0,2:\star),\  G_2=2G_1$
\item $n=2$: $\ep'=(1:0:0,2:\star) \to \delta'=(0,2:0,2:\star),\  2G_2= G_1$
\item $n=3$: $\ep'=(2:0:0,2:\star) \to \delta'=(0:0:0: \star ),\  2G_2= G_1$
\item $n=4$: $\ep'=(2:0:0,2:\star) \to \delta'=(0:1:0: \star ),\  3G_2= G_1$
\item $n=5$: $\ep'=(2:0:0,2:\star) \to \delta'=(0,2:0: \star ),\  4G_2= G_1$

\end{enumerate}

\end{example}

Appearing in  \appone\ is the list of all pairs $\ep'$ and $\delta'$ 
along with the equality $d=w$  that are obtained by
using Algorithm \ref{alg:steps} and the analysis demonstrated in 
Example \ref{exm:(4,6,7)}.  The column Ratio  in the table will be explained in 
Section \ref{sec:second-term}, and we summarize the discussion as follows:
\begin{prop} \label{prop:list-A}
Let $\ep\in\cE$.
Then, $\ep $ is equal to  one of the tuples appearing in \appone\ if and only if 
there are $\ep<\delta$ in $\cE$ such that $\ep\dG=\delta\dG$.
\end{prop}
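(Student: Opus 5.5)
The claim follows from Proposition \ref{prop:V-circuit}: since $(g,h)=(2,1)$ satisfies $g\ge h$, a tuple $\ep\in\cE$ admits some $\delta>\ep$ in $\cE$ with $\ep\dG=\delta\dG$ if and only if $\ep=(\mathbf v:\star)$ for some $\mathbf v\in V$. It therefore suffices to show that the prefixes listed in \appone\ (with the attached condition on $(G_1,G_2)$) are exactly the elements of $V$. Since $V=\bigcup_{\ep}\{\mathbf v : \{\mathbf v,\mathbf w\}\in V_\ep\}$, where $\ep$ ranges over the tuples of the forms (\ref{eq:V-forms}) with first entry $0$, the task reduces to computing every $V_\ep$ by Algorithm \ref{alg:steps}.

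First I would enumerate the finitely many candidates $\ep$. For $(g,h)=(2,1)$ the semi-proper blocks are $(0),(1),(0,2)$. So the forms $(a\diamond b\diamond c:\vEc u)$ and $(a\diamond b:c,g)$ with $a=0$ give a short explicit list of $\ep^\circ$. These are determined by the block structure of entries $2,3,4$ together with one further block $\vEc u$ or $(c,2)$.

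Next I would apply the algorithm to each candidate, as in Examples \ref{exm:(4,6,7)} and \ref{exm:short-circuit}. Lemma \ref{lem:ell-bounds}(1) gives $\ell<g/h=2$, so each path $\gamma$ has at most one edge of weight $hG_1=G_1$. Lemma \ref{lem:ell-bounds}(2) bounds $m\le g(\ell+1)+h\le 5$, and Corollary \ref{cor:G2-unique} forces $G_2\le 2$. Hence only paths of length at most about $6$ in the diagram of Example \ref{exm:diagram-(2,1)} need to be examined. For each such path I would:
\begin{itemize}
\item compute $w=mG_2+\ell G_1$;
\item try each pair $0\le t_1<s_1\le 2$;
\item impose $d=(s_1-t_1)G_1=w$, and use $\gcd(G_1,G_2)=1$ to pin down $(G_1,G_2)$;
\item check that $\ep'=(s_1,\ep_2,\dots)$ and $\delta'=(t_1,\delta_2,\dots)$ lie in $\cE$.
\end{itemize}
Theorem \ref{thm:initial-values} guarantees that this search finds all members of $V_\ep$ and nothing else. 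Lemma \ref{lem:circuit-roots} guarantees that the outcome depends only on the prefix, not on the tail $\star$.

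The main obstacle is the bookkeeping: I must make sure that the candidate list of $\ep$ is complete and that every admissible path is traced correctly through successive $\wt\cdot$ steps, including the carries governed by Lemma \ref{lem:long-short-circuits-0}. I would organize the check by circuit root, treating the long circuit rooted at $(0:0:0:\star)$ and the short circuit rooted at $(0:0:0,2:\star)$ separately. The bounds on $m$ and $\ell$ cut each enumeration off at a definite length. The converse inclusion, that each listed tuple really has a larger equal-valued partner, is immediate from the explicit equalities $\ep'\dG=\delta'\dG$ recorded in the table.
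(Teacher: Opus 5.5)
Your proposal is correct and follows the paper's route: the paper obtains the table in the Appendix by running Algorithm \ref{alg:steps} on the finitely many candidates of the forms (\ref{eq:V-forms}), as in Example \ref{exm:(4,6,7)}, with Lemma \ref{lem:ell-bounds} bounding the search, and Proposition \ref{prop:V-circuit} turns the resulting set $V$ into the stated equivalence. One small imprecision: the table lists merged prefixes such as $(2:\star)$ rather than literal elements of $V$, so what you actually verify is that the sets $(\mathbf v:\star)$, $\mathbf v\in V$, have the same union as the tabulated forms. Likewise, the converse direction comes from the converse of Proposition \ref{prop:V-circuit} (tail-independence of the path), not from the table's relations $d=w$ alone.
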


Recall the finite set $V$ from Definition \ref{def:V-circuits}.
In \appone, we further reduced the expressions of the tuples in $V$ 
if they have the same expressions of $w$ and $d$.
For the case of $2G_1=G_2$, all pairs $\ep<\delta$ in $\cE$ with $\ep\dG = \delta\dG$ come from
$w=G_2$ and $d=2G_1$, and $\ep$ can be further reduced to $(2:\star)$ from $(2:0:\star)$,
$(2:1:\star)$, and $(2:0,2:\star)$.

\subsection{Example II} \label{sec:exm-II}
Let $(g,h)=(1,2)$.
Then, we have the following graph:
 
\begin{center} 
 \begin{tikzcd}
   (1,0)  \ar{rr}{2G_1} &      &  (0,1) \ar{d}   \\ 
    (0,0)\ar{u} &    & (1,1). \ar{ll}{G_2,\ 2G_1} 
\end{tikzcd} 
\end{center} 
Listed below are  all pairs $\ep'$ and $\delta'$ 
along with the equality $d=w$  that are obtained by
using Algorithm \ref{alg:steps}:
\begin{equation*}
\begin{array}{| c  | c | c | l | l | l |}
\hline
\text{Equality} &w & d & \hfill\ep'\hfill & \hfill\delta' \hfill & \hfill\text{Ratio}\hfill \\
 \hline
\VS{1.2em}
 \mrow{1}{ G_1= 2G_2}
  	& \mrow{1}{2G_2} & \mrow{1}{ G_1} &  (1:0,1:\star) & (0:0: \star) & vr \ome^3\\   
       &   &  &   (1:1,1:\star) & (0:1: \star) &  vr \ome^3 \\   
\hline
\VS{1.2em}
\mrow1{ G_1=3G_2} & \mrow1{3G_2} & \mrow1{G_1} &  (1:0,1:\star) & (0:1: \star) & vr \ome^3\\  
 \hline 
\end{array} 
\end{equation*}

 \subsection{Example III} \label{sec:exm-III}
Let $(g,h)=(1,3)$.
Then, we have the following graph:
\begin{center} 
 \begin{tikzcd}
   (1,0)  \ar{rr}{3G_1} &      &  (0,1) \ar{d}     \\ 
    (0,0)\ar{u} &  (2,1) \ar{l}  & (1,1) \ar{d}{3G_1}  \ar{l}\\
       & (1,2) \ar{ul}{3G_1}   &  (0,2) \ar{l} \\
\end{tikzcd}. 
\end{center} 
Listed below are
 all pairs $\ep'$ and $\delta'$ 
along with the equality $d=w$  that are obtained by
using Algorithm \ref{alg:steps}:
\begin{equation*}
\begin{array}{| c  | c | c | l | l | l |}
\hline
\text{Equality} & w & d & \hfill\ep'\hfill & \hfill\delta' \hfill & \hfill\text{Ratio}\hfill \\
 \hline
\VS{1.2em}
 \mrow{1}{ G_1= 2G_2}
  	& \mrow{1}{ 2G_2} & \mrow{1}{  G_1} &  (1:0,1:\star) & (0:2,1:\star) & vr \ome^3\\   
       &   &  &   (1:1,1:\star) & (0:0: \star) &  vr \ome^3 \\   
            &   &  &   (1:2,1:\star) & (0,1: \star) &  vr \ome^3 \\   
\hline
\VS{1.2em}
\mrow1{ G_1=3G_2} & \mrow1{3G_2} & \mrow1{G_1} &  (1:0,1:\star) & (0:0: \star) & vr \ome^3\\  
 &   &  &  (1:1,1:\star) & (0:1: \star) & vr \ome^3\\  
\hline
\VS{1.2em}
\mrow1{ G_1=4G_2} & \mrow1{4G_2} & \mrow1{G_1} &  (1:0,1:\star) & (0:1: \star) & vr \ome^3\\   
 \hline 
\end{array} 
\end{equation*}

\section{Calculation of $\#R(X)$}\label{sec:R(X)}

Throughout the section, let $G$ be a sequence in $\cH$ defined in Definition \ref{def:3-recurrence}, and 
let $X$ denote a large positive integer. 
 Recall the subset $R (X)$ of integers from Definition \ref{def:R} and 
 the set $\cEstar_G$ of equivalence classes from Definition \ref{def:cFstar}.
 \begin{deF}
\rm
Let $\cEstar_G(X):=\set{\ecl \ep \in \cEstar_G: \ep\dG\le X}$.  
When the context is clear, we simply 
write $\cEstar(X)$.
\end{deF}

The goal of our paper is to find an asymptotic formula of $\#R(X)$, and 
we have 
\begin{gather}
\#R(X) = \#\set{ \ep\in \cE : \ep\dG\le X} -  
\sum_{\ecl \ep\in\cEstar(X)}\big( \#\ecl \ep-1\big) .\label{eq:R(X)}
\end{gather}  
 Recall the subset $V$ from Definition \ref{def:V-circuits} 
  and \ref{def:V-chains}, and define
 \begin{equation}
V^\star := \set{ (\vEc v : \vEc w) \in\cE: \vEc v \in V,\ \vEc w \in\cEp}.\label{eq:Vstar}
\end{equation}
Notice that 
if $\ecl \ep\in \cEstar$  and there is  the largest tuple  $\delta$   in $\ecl \ep$ not equal to $\ep$, then Propositions \ref{prop:V-circuit} and \ref{prop:V-chain} imply $\delta \not \in V^\star$  and
$V^\star\cap \ecl \ep = \ecl \ep\backslash\set{\delta}$.
Thus, $\#\ecl \ep-1 = \#(V^\star\cap \ecl \ep)$, and  
\begin{gather}
\sum_{\ecl \ep\in\cEstar(X)}\big( \#\ecl \ep-1\big)
=\#\set{ \ep \in V^\star : \ep\dG \le X}.
\label{eq:v:w}
\end{gather}  
In Section \ref{sec:second-term},  we introduce an 
asymptotic formula for  the number of $\ep\in\cE$
such that  $\ep\dG\le X$ and $\ep=(\vEc v: \vEc w)$ where $\vEc w$ varies in $ \cEp$ and $\vEc v\in V$ is fixed (Proposition
\ref{prop:S(v,X)}).
By (\ref{eq:Vstar}) and 
(\ref{eq:v:w}), the formula will yield an answer for the second term of (\ref{eq:R(X)}).

\subsection{The first term}

We estimate the first term of (\ref{eq:R(X)}).  Recall the \funds\ $H$ of $\cE$ from Definition \ref{def:expansions-eval},
and its description $H_k=a_0 \al^k + b_0 \beta^k$ from Definition \ref{def:char-poly}.
Let $a_1$ and $b_1$ be two real numbers such that  
\begin{gather}
\label{eq:G-binet}
G_k = a_1\al^k + b_1 \beta^k,\ \forall k\in\nat.\\
\intertext{Using the initial values of the sequences, we find}
a_0=\frac{(g+1) \al + h}{\al^2(\al-\beta)},\quad
a_1=\frac{G_2\al + hG_1}{\al^2(\al-\beta)}.
\end{gather}

\begin{deF}
\label{def:S(X)}\rm
Define $S(X):=\set{ \ep\in \cE : \ep\dG\le X}$, and $T(X):=\set{ \ep\in \cE : \ep\dH\le X}$.
\end{deF}

We   estimate $\#S(X)$ using $\#T(X)$.
Let us prepare some lemmas 
 for error terms.
Let $M:=\max\set{g,h-1}$, and let $\beta_0:=\abs{\beta}$.
Recall the error term $E(X)$ from (\ref{eq:E(X)}).
\begin{lemma}\label{lem:m}
Let $X\in \nat$.
Let $\ep\in S(X)$, and 
let $m:=\ord(\ep)$.
Then, 
$$m \le \log_\al (X)  + O(1),$$
 and
$\sum_{k=1}^m \beta_0^k =O(E(X))$.
\end{lemma}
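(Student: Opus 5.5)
The plan is to bound $\ep\dG$ from below by its leading term, using $\ep_m\ge 1$ where $m=\ord(\ep)$, and then to sum the geometric series in $\beta_0$ in the three regimes of $E(X)$.

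First, since $\ep\in\cE$ and $m=\ord(\ep)$, we have $\ep_m\ge 1$ and all entries are non-negative. Hence
\[
X\ge \ep\dG\ge G_m .
\]
By the Binet form (\ref{eq:G-binet}), $G_m=a_1\al^m+b_1\beta^m$ with $a_1>0$, because $G_2\al+hG_1>0$ and $\al>\beta$. Since $\abs{\beta}<\al$ (the product of the roots is $-h<0$ and the sum is $g>0$), we get $G_m\ge \tfrac12 a_1\al^m$ for all $m$ larger than some constant $m_0$ depending only on $G$. For $m\le m_0$ the bound is trivial. Therefore $\al^m\le 2X/a_1$, which gives $m\le \log_\al X+\log_\al(2/a_1)=\log_\al(X)+O(1)$. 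This step could equally be done using positivity of $G_k$ together with $G_{k+2}\ge gG_{k+1}$, but the Binet route is cleanest.

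For the second claim, split into three cases according to $\beta_0$.
\begin{itemize}
\item If $\beta_0<1$, then $\sum_{k=1}^m\beta_0^k\le \beta_0/(1-\beta_0)=O(1)$.
\item If $\beta_0=1$, the sum equals $m\le\log_\al X+O(1)=O(\ln X)$.
\item If $\beta_0>1$, the sum is at most $\beta_0^{m+1}/(\beta_0-1)$. Using the first part, $\beta_0^{m}\le \beta_0^{\log_\al X+C}=\beta_0^C X^{\log_\al\beta_0}=O(X^\gamma)$, since $\beta_0^{\log_\al X}=X^{\log_\al \beta_0}$.
\end{itemize}
This matches $E(X)$ in (\ref{eq:E(X)}), with $\gamma<1$ because $\beta_0<\al$.

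There is no real obstacle here. The only point needing care is justifying $a_1>0$ and $\beta_0<\al$ so that the leading term dominates, and both follow directly from the explicit formula for $a_1$ and from $x^2-gx-h$ having roots of opposite sign with positive sum.
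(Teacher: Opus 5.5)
Your proof is correct. For the second claim it matches the paper's argument: the same split into $\beta_0<1$, $\beta_0=1$, $\beta_0>1$, using $\beta_0^{\log_\al X}=X^{\log_\al\beta_0}$ in the last case.

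For the bound on $m$ you argue differently from the paper, and in the direction that is actually needed. The paper uses $\ep_k\le M$ to write $\ep\dG\le M\sum_{k=1}^m G_k=O(\al^m)$, and then passes to ``$\al^m O(1)\le X$''. An upper bound on $\ep\dG$ cannot bound $m$ from above, so that display only works if one silently reads $O(\al^m)$ as ``of exact order $\al^m$''.

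You instead use $\ep_m\ge 1$ to get $X\ge\ep\dG\ge G_m\ge\tfrac12 a_1\al^m$ for $m\ge m_0$. You justify $a_1>0$ and $\beta_0<\al$ from the explicit formula for $a_1$ and the sign pattern of the roots of $x^2-gx-h$. This is exactly the lower bound the paper's proof leaves implicit. The case $\ep=\mathbf 0$ (so $m=0$) is trivial and can be set aside.

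One caution about your parenthetical aside. Positivity plus $G_{k+2}\ge gG_{k+1}$ only gives growth like $g^k$. That is weaker than $\al^k$, since $\al^2=g\al+h>g\al$, and it gives no exponential growth at all when $g=1$. An elementary substitute for Binet would be the induction $G_{k+2}=gG_{k+1}+hG_k\ge c(g\al^{k+1}+h\al^k)=c\al^{k+2}$, with $c=\min\{G_1/\al,\,G_2/\al^2\}$. Since your proof does not rely on the aside, it stands as written.
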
 
\begin{proof}

Since $\ep_k\le M$, we have 
\GGG{
 \ep\dG\le M \sum_{k=1}^m G_k=M \sum_{k=1}^m a_1\al^k + b_1\beta_0^k
 =M \sum_{k=1}^m \al^k\left(a_1 + b_1(\beta_0/\al)^k\right)\\
\implies 
\ep\dG=O(\al^m)   \le X 
\implies
\al^m O(1)\le X\\
\implies
m \le \log_\al (X)  + O(1).
}
If $\beta_0<1$, then $\sum_{k=1}^m \beta_0^k=O(1)$.
If $\beta_0=1$, then $\sum_{k=1}^m \beta_0^k=m=O(\ln X)$.
If $\beta_0>1$, then there is a positive number $C$ independent of $m$ and $X$ such that 
\GGG{
\sum_{k=1}^m \beta_0^k<C\beta_0^m \le C\beta_0^{\log_\al X+O(1)}
=C\beta_0^{O(1)} X^{\log_\al \beta_0}.
}
\end{proof}

Note that 
\begin{gather} \label{eq:Gk-Hk}
G_k = \tfrac{a_1}{a_0} H_k + O(\beta_0^k), \quad
\frac{a_0}{a_1} = \frac{(g+1)\al + h}{ G_2\al + hG_1}=r  
\end{gather} 
where $r$ is defined in Definition \ref{def:R}.

 \begin{theorem} \label{thm:S(X)}
For $X\in\nat$, we have $\#S(X) = r X + O(E(X))$.
\end{theorem}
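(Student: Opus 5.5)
We compare $S(X)$ with $T(X)$, using that $H$ is a fundamental sequence for $\cE$. Since $\eval_H:\cE\to\nat_0$ is a bijection, $\#T(X)=X+1$ (the zero tuple included), so $\#T(Y)=Y+O(1)$ for every real $Y\ge 0$. The plan is to show that, up to an error of size $O(E(X))$, the condition $\ep\dG\le X$ is the same as $\ep\dH\le rX$. From this we will get $\#S(X)=\#T(rX)+O(E(X))=rX+O(E(X))$.

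The key pointwise estimate comes from (\ref{eq:Gk-Hk}). We have $G_k=\tfrac{a_1}{a_0}H_k+O(\beta_0^k)$ and $a_0/a_1=r$. Take $\ep\in\cE$ with $m=\ord(\ep)$; each entry is at most $M=\max\set{g,h-1}$. Then
\[
\ep\dH \;=\; r\,\ep\dG + O\Big(M\sum_{k=1}^m\beta_0^k\Big).
\]
Lemma \ref{lem:m} bounds $m$ by $\log_\al X+O(1)$ whenever $\ep\dG\le X$, and the same argument with $H$ in place of $G$ gives the same bound whenever $\ep\dH\le rX+O(E(X))$. In either case the error sum is $O(E(X))$. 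So there is a constant $C$ such that
\[
\ep\in S(X)\implies \ep\in T\big(rX+CE(X)\big),
\qquad
\ep\in T\big(rX-CE(X)\big)\implies \ep\in S(X).
\]

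These inclusions give $\#T(rX-CE(X))\le\#S(X)\le\#T(rX+CE(X))$. Both bounds equal $rX+O(E(X))$, because $\#T(Y)=\lfloor Y\rfloor+1$.

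The one step that needs care is the order bound on the $T$ side. If $\ep\dH\le rX+CE(X)$, then $H_m\le\ep\dH$ forces $\al^m=O(X)$, because $E(X)=O(X)$ (note $\gamma<1$). Hence $m\le\log_\al X+O(1)$ there as well, so the error sum is again $O(E(X))$ by the computation in Lemma \ref{lem:m}. Two further points need checking: the implied constants must be uniform in $\ep$, which holds since $M$ is fixed; and the case $\beta_0=1$ must be handled, where the sum is $m=O(\ln X)$. I do not expect a serious obstacle; the main effort is the bookkeeping that keeps the two-sided bound symmetric.
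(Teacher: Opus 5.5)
Your proposal is correct and follows the paper's proof essentially step for step. The paper likewise uses $G_k=\tfrac{a_1}{a_0}H_k+O(\beta_0^k)$ with $a_0/a_1=r$, together with Lemma~\ref{lem:m}, to obtain $T(rX-E''(X))\subseteq S(X)\subseteq T(rX+E'(X))$, and then concludes from $\#T(Y)=Y+O(1)$, which holds because $H$ is a fundamental sequence. Your explicit order bound on the $T$ side, via $H_m\le \ep\dH$, fills in a step the paper leaves implicit.
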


\begin{proof}
By (\ref{eq:Gk-Hk}),
there is   $C_1\in \nat$  such that 
 \GGG{
 \tfrac{a_1}{a_0} H_k -C_1\beta_0^k < G_k <\tfrac{a_1}{a_0} H_k + C_1\beta_0^k,\\
 \tfrac{a_0}{a_1} G_k -C_1\beta_0^k < H_k <\tfrac{a_0}{a_1} G_k + C_1\beta_0^k 
 }
  for all $k\in\nat$.
  By Lemma \ref{lem:m}, there is $C_0\in\nat$  such that
$-C_0E(X) < \sum_{k=1}^m\beta_0^k<C_0 E(X)$.

Let us show that there is an injection from $S(X)$ to $T(\frac{a_1}{a_0} X+E'(X))$ given by $\ep\mapsto \ep$
where $E'(X)=C_1MC_0\frac{a_0}{a_1}E(X)$.
Let $\ep\in S(X)$, and  $m:=\ord(\ep)$.  Then,
\GGG{
 \ep\dG = \sum_{k=1}^m \ep_k G_k > 
 \sum_{k=1}^m \ep_k  \tfrac{a_1}{a_0} H_k - C_1M\beta_0^k 
 =\tfrac{a_1}{a_0} \ep\dH - C_1M\sum_{k=1}^m\beta_0^k  \\
  > \tfrac{a_1}{a_0} \ep\dH  - C_1M C_0 E(X).\\
 \intertext{Then, $\ep\dG \le X$ implies that }
  \tfrac{a_1}{a_0} \ep\dH  - C_1M C_0 E(X) < X
  \implies
  \ep\dH <\tfrac{a_0}{a_1} X+ C_1MC_0\tfrac{a_0}{a_1}E(X).
}
Thus, $\ep\in T(\frac{a_0}{a_1} X+E'(X))$.

Similarly, there is an injection from $T(\frac{a_0}{a_1} X-E''(X))$ to $S(X)$ given by $\ep\mapsto \ep$
where $E''(X)=C_1MC_0 E(X)$.
Let $\ep\in T(\frac{a_0}{a_1} X-E''(X))$.
Then,
\GGG{
\ep\dH> \sum_{k=1}^m \ep_k  \tfrac{a_0}{a_1} G_k - C_1M\beta_0^k
>\tfrac{a_0}{a_1}\ep\dG -C_1MC_0 E(X).\\
\ep\dH\le \tfrac{a_0}{a_1} X-E''(X) \implies
\tfrac{a_0}{a_1}\ep\dG -C_1MC_0 E(X)< 
\tfrac{a_0}{a_1} X-E''(X).
}
This implies $\ep\dG < X$, and therefore $\ep\in S(X)$.

 Injectivity implies that 
$\#T(\frac{a_0}{a_1} X-E''(X))\le \#S(X) \le \#T(\frac{a_0}{a_1} X+E'(X))$, and 
since $H$ is a \funds\ of $\cE$, we have 
\GGG{
\tfrac{a_0}{a_1} X-E''(X) \le \#S(X)\le \tfrac{a_0}{a_1} X+E'(X),
}
which proves the assertion.
\end{proof}

 \begin{cor}
If $G\in\cH$ satisfies \uniexp, then $\#R(X)=rX + O(E(X))$.
\end{cor}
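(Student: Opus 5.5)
This follows directly from the identity (\ref{eq:R(X)}) together with Theorem \ref{thm:S(X)}. The plan is to show that the correction term in (\ref{eq:R(X)}) vanishes when $G$ has the unique expansion property, and then to read off the asymptotic count of $\#S(X)$.

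First, I will identify the first term of (\ref{eq:R(X)}) with $\#S(X)$. By Definition \ref{def:S(X)}, $\#\set{\ep\in\cE : \ep\dG\le X}=\#S(X)$.

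Second, I will show that the second term is zero. By Definition \ref{def:expansions-eval}, the unique expansion property means that $\eval_G$ is injective on $\cE$. Hence every equivalence class $\ecl\ep$ in $\cEstar_G$ (Definition \ref{def:cFstar}) is a singleton, so $\#\ecl\ep-1=0$ for all $\ecl\ep\in\cEstar(X)$. The sum in (\ref{eq:R(X)}) therefore vanishes. Equivalently, $V=\emptyset$ by Definitions \ref{def:V-circuits} and \ref{def:V-chains}, and then (\ref{eq:v:w}) gives zero as well.

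Third, I will confirm that (\ref{eq:R(X)}) is valid. The map $\eval_G$ restricted to $S(X)$ surjects onto $R(X)$, and its fibers are exactly the classes $\ecl\ep$ with $\ep\dG\le X$. So $\#R(X)=\sum_{\ecl\ep\in\cEstar(X)}1=\#S(X)-\sum(\#\ecl\ep-1)$. One check is needed here: $\mathbf 0$ lies in $S(X)$ but evaluates to $0\notin R(X)$. This changes the count by at most $1$, which is absorbed into $O(E(X))$ because $E(X)\ge 1$ for large $X$.

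Combining the three steps gives $\#R(X)=\#S(X)+O(1)=rX+O(E(X))$ by Theorem \ref{thm:S(X)}. I do not expect any genuine obstacle. The only point requiring care is the bookkeeping for the zero tuple.
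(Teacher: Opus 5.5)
Your proposal is correct and matches the paper's proof: unique expansions make every class $\ecl\ep$ a singleton, so \eqref{eq:R(X)} reduces to $\#R(X)=\#S(X)$, and Theorem~\ref{thm:S(X)} gives $rX+O(E(X))$. Your remark about the zero tuple is a valid off-by-one correction that the paper glosses over: $\mathbf 0$ lies in $S(X)$ but evaluates to $0$, which is not in $R(X)$. As you note, this discrepancy is absorbed into $O(E(X))$.
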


\begin{proof}
If $G\in\cH$ satisfies \uniexp, then $\#[\ep]=1$ for all $\ep\in\cE$.
By (\ref{eq:R(X)}), we obtain $\#R(X) = \#S(X)$.
\end{proof}

 \subsection{The second term}\label{sec:second-term}
 
 In this section, we introduce the formula for the description (\ref{eq:v:w}) of the second term of $\#R(X)$, and 
 at the end of the section, we prove Theorem \ref{thm:main-1}. 
 Recall the subcollections $\cEp$ and $\cEm$ from Definition \ref{def:Z-collection}.

\begin{deF}
\label{def:Tp}\rm
Let $\Tp(X):=\set{\ep\in T(X) : \ep\in\cEp}$, $\Tm(X):=\set{\ep\in T(X) : \ep\in\cEm}$, $\Sp(X):=\set{\ep\in S(X) : \ep\in\cEp}$,
and
$\Sm(X):=\set{\ep\in S(X) : \ep\in\cEm}$.
\end{deF}

\begin{prop} \label{prop:Un}
Let  $U_n:=\#\Tp(H_{n+1}-1)$ for $n\in\nat$. 
Then, $U_n$ is equal to the number tuples $\ep$ in  $\cEp$  
such that $\ord(\ep)\le n$, and 
\GGG{
  U_{n+2}=g U_{n+1} + h U_n
  }
  for all $n\in\nat$ and $(U_1,U_2)=(g,g^2+h)$.
\end{prop}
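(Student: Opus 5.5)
The plan is to split the statement into two parts. First I identify $\Tp(H_{n+1}-1)$ with the set of tuples in $\cEp$ of order at most $n$. Then I count the latter by a block-decomposition argument that yields the recurrence directly.

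\emph{First part.} The sequence $H$ satisfies $gH_1=g<g+1=H_2$, so Theorem \ref{thm:initial-unique} applies to $H$. Hence $\ep<\delta$ in $\cE$ if and only if $\ep\dH<\delta\dH$. Let $\mathbf e_{n+1}$ denote the tuple with a $1$ in position $n+1$ and zeros elsewhere. Since $n\ge 1$, this tuple lies in $\cE$: its top block is either $(1)$ when $g\ge 2$, or $(0,g)=(0,1)$ when $g=1$. Its value is $\mathbf e_{n+1}\dH=H_{n+1}$.

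By Definition \ref{def:lex-order}, every $\ep\in\cE$ with $\ord(\ep)\le n$ satisfies $\ep<\mathbf e_{n+1}$, and therefore $\ep\dH<H_{n+1}$. On the other hand, every $\ep\in\cE$ with $\ord(\ep)\ge n+1$ satisfies $\ep\ge \mathbf e_{n+1}$, because its entry of index $\ord(\ep)$ is positive. Hence $\ep\dH\ge H_{n+1}$ for such $\ep$. Restricting to $\cEp$ gives
\[
\Tp(H_{n+1}-1)=\set{\ep\in\cEp:\ord(\ep)\le n},
\]
which is the first claim.

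\emph{Second part.} Let $U_0:=1$, counting the empty tuple. I claim that tuples of $\cEp$ with order at most $m$ correspond bijectively to concatenations of \spb s of total length exactly $m$, obtained by padding with zeros, i.e.\ with blocks $(0)$. Existence of such a decomposition follows from the definition of $\cEp$, and uniqueness follows from Corollary \ref{cor:unique-decomp}.

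Now classify the padded tuples of length $n+2$ by the entry in position $n+2$. If this entry is $c\le g-1$, it must be the single block $(c)$, since a block $(c',g)$ has top entry $g$. The remaining entries form an arbitrary padded tuple of length $n+1$, so there are $g\,U_{n+1}$ tuples of this kind. If the entry in position $n+2$ equals $g$, then, because $(g)$ is not a \spb, positions $n+1$ and $n+2$ must form a block $(c,g)$ with $0\le c\le h-1$. The remaining entries form a padded tuple of length $n$, giving $h\,U_n$ tuples.

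This yields $U_{n+2}=gU_{n+1}+hU_n$ for all $n\ge 0$. Direct enumeration gives $U_1=g$, from the blocks $(0),\dots,(g-1)$, and $U_2=g^2+h$, from $g^2$ pairs of single blocks plus $h$ blocks $(c,g)$. The latter agrees with $gU_1+hU_0$.

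The only delicate point I expect is the uniqueness of the decomposition after padding: the padding zeros must be absorbed into the blocks consistently, and a leading entry equal to $g$ must be forced to be the top of a two-entry block. Both follow from reading the blocks from the top index downward, as in Lemma \ref{lem:block-decomp}.
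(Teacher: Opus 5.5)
Your proof is correct and is essentially the paper's argument: both first identify $\Tp(H_{n+1}-1)$ with the tuples of $\cEp$ of order at most $n$, and then obtain the recurrence by peeling one \spb\ off a unique block decomposition. The differences are minor. The paper peels off the block containing $\ep_1$ (the cases $(a:\vEc w)$ and $(b,g:\vEc w)$) and gets the order bound from $H_{n+1}-1=(\dots,h-1,g:h-1,g)\dH$ together with the fundamental-sequence property. You instead peel off the top block and use $\mathbf e_{n+1}$ with Theorem \ref{thm:initial-unique}, which has the small advantage of making both inclusions and the uniqueness of the block reading explicit.
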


\begin{proof}
Let $n\ge 3$, and 
$\ep \in \Tp(H_{n+1}-1)$.  
Notice that
$$H_{n+1}-1= (\dots,h-1,g: h-1,g)\dH.$$
Since $H$ is a \funds, we have $\ord(\ep)\le n$.
Consider the \spb\ decomposition structure of the first entry of $\ep$,
i.e., $\ep=(a:\vEc w)$ or $\ep=(b,g:\vEc w)$ where $0\le a\le g-1$ and 
$0\le b\le h-1$.
By definition of $U_n$, the number of $\vEc w$ for the first case is $U_{n-1}$ for each $a$, and 
the number for the second case is $U_{n-2}$ for each $b$.
So, $U_n= g U_{n-1} + h U_{n-2}$ for $n\ge 3$.
It is clear that $U_1=g$.  For the value of $U_2$, consider $(a_1:a_2)$ where $0\le a_k \le g-1$ and 
$(b, g)$ where $0\le b\le h-1$.  Hence, $U_2 = g^2 + h$.
\end{proof}

By (\ref{eq:Gk-Hk}), as $n\to\infty$, we have
\begin{gather} 
U_n/H_{n+1} \to \frac{ (g^2+h)\al +gh}{\al( (g+1)\al + h ) }
=\frac{ g\al +h}{ (g+1)\al + h  }=\frac{\al}{\al+1}=\frac1{1+\ome} 
\end{gather}
where we used $\al^2=g\al +h$.

\begin{deF}\rm
Let $v:= 1/(1+\ome)$.
\end{deF}

\begin{cor}\label{cor:un}
For $n\in\nat$, we have $ U_n - v H_{n+1} = O(\beta_0^{n})$.
If $X=H_{n+1}-1$, then $\#\Tp(X) = v X +  O(\beta_0^{n})$.
\end{cor}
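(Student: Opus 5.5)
The plan is to first show $U_n - vH_{n+1} = O(\beta_0^n)$ using the fact that both sequences satisfy the recurrence (\ref{eq:recurr}). Then the second claim follows from the identification $\#\Tp(H_{n+1}-1) = U_n$ in Proposition \ref{prop:Un}.

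By Proposition \ref{prop:Un}, $U_n$ satisfies $U_{n+2}=gU_{n+1}+hU_n$. The shifted sequence $n\mapsto H_{n+1}$ satisfies the same recurrence. So does $D_n := U_n - vH_{n+1}$, since the recurrence is linear with constant coefficients. Because $\al\ne\beta$, there are real constants $c,d$ with $D_n = c\al^n + d\beta^n$ for all $n\in\nat$. It then suffices to show $c=0$.

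The limit computed just before the definition of $v$ gives $U_n/H_{n+1}\to v$. Hence $D_n/H_{n+1}\to 0$. We also have $H_{n+1}=a_0\al^{n+1}+O(\beta_0^{n})$ with $a_0>0$, and $\beta_0<\al$ because $\al+\beta=g>0$ and $\al\beta=-h<0$ force $\beta<0<\al$ with $|\beta|<\al$. Therefore
\[
\frac{D_n}{H_{n+1}} = \frac{c\al^n + d\beta^n}{a_0\al^{n+1}+O(\beta_0^n)} \to \frac{c}{a_0\al}.
\]
This forces $c=0$, so $D_n = d\beta^n = O(\beta_0^n)$.

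An alternative that avoids the limit is a direct check. Set $c = a_0' - v a_0\al$, where $a_0'$ is the $\al$-coefficient of $U_n$, obtained from $(U_1,U_2)=(g,g^2+h)$. Verifying $c=0$ amounts to the same algebra as the displayed limit computation, namely $\al^2 = g\al+h$ and $(g\al+h)/((g+1)\al+h) = \al/(\al+1)$. I would cite the displayed computation rather than redo it.

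For the second statement, take $X=H_{n+1}-1$. Then $\#\Tp(X)=U_n$ by Proposition \ref{prop:Un}. So $\#\Tp(X) = vH_{n+1}+O(\beta_0^n) = vX + v + O(\beta_0^n)$. The constant $v$ is absorbed: if $\beta_0\ge 1$ it is $O(\beta_0^n)$ trivially.

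The only real subtlety is the case $\beta_0<1$, where $O(\beta_0^n)$ tends to $0$ and cannot absorb the constant $v$. There I would read the error term as $O(\max\{1,\beta_0^n\})$, consistent with $E(X)$. If instead the statement is meant literally, the bound $\#\Tp(X) = vX + O(\beta_0^n)$ should be stated with $X = H_{n+1}$, or with the constant $v$ kept explicit. I expect this bookkeeping of the constant, not the asymptotic argument, to be the main point needing care.
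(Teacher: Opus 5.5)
Your proof is correct and is essentially the argument the paper leaves implicit. $U_n$ and $H_{n+1}$ satisfy the same recurrence, and the displayed limit $U_n/H_{n+1}\to v$ (equivalently, matching the $\al$-coefficients of the two Binet forms) removes the $\al^n$ part of $U_n-vH_{n+1}$, leaving a multiple of $\beta^n$.

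Your caveat for $\beta_0<1$ is also right: there $\#\Tp(H_{n+1}-1)-v(H_{n+1}-1)\to v\neq 0$. This is harmless, because only the first statement is used later, in the proof of Theorem~\ref{thm:Tp(X)}.

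However, your suggested repair of taking $X=H_{n+1}$ would not work. The tuple with a single $1$ in position $n+1$ lies in $\cEp$, so $\#\Tp(H_{n+1})=U_n+1$ and the residual becomes $1$ instead of $v$. The valid forms are $\#\Tp(H_{n+1}-1)=vH_{n+1}+O(\beta_0^n)$, or the original statement with error term $O(\max\{1,\beta_0^n\})$.
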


\begin{theorem} \label{thm:Tp(X)}
For $X\in\nat$,
  $\#\Tp(X) = v X + O(\sum_{k=1}^n \beta_0^k)$
  if $H_n \le X < H_{n+1}$.
In particular,  $\#\Tp(X) = v X + O(E(X))$.
\end{theorem}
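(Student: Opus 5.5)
We prove the estimate by a greedy, digit-by-digit decomposition of $X$, the standard argument for counting under a fundamental sequence, using Corollary \ref{cor:un} as the base case. Since $H$ is a \funds\ for $\cE$ and $\eval_H$ is strictly increasing on $\cE$ (Theorem \ref{thm:initial-unique} applies because $gH_1=g<g+1=H_2$), the set $T(X)$ is an initial segment of $\cE$ in the \lex\ order. Hence $\#\Tp(X)$ counts the tuples of $\cEp$ that are lexicographically at most $\mu$, where $\mu\in\cE$ is the unique tuple with $\mu\dH=X$.

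\textbf{Step 1: peel off the top block.} Let $H_n\le X<H_{n+1}$, so $\ord(\mu)=n$. Write the \spb\ decomposition $\mu=(\mathbf b_s\mid\cdots\mid\mathbf b_1)$, read from the top. By Lemma \ref{lem:block-decomp}, a tuple $\ep\in\cEp$ with $\ep\le\mu$ satisfies exactly one of the following.
\begin{enumerate}
\item It agrees with $\mu$ in the top blocks $\mathbf b_1,\dots,\mathbf b_{p-1}$ and has a strictly smaller block $\mathbf c_p<\mathbf b_p$ in position $p$. Its lower part is then an arbitrary element of $\cEp$ of order at most the remaining length $j$.
\item It equals $\mu$, which contributes at most one tuple.
\end{enumerate}
The number of lower parts of order at most $j$ is $U_j$ by Proposition \ref{prop:Un}, with the convention $U_0=1$. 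The smaller blocks $\mathbf c_p$ fall into two kinds. Single-entry blocks $(c)$ leave $U_{j}$ completions, and two-entry blocks $(c,g)$ leave $U_{j-1}$ completions, where $j$ is the number of positions below the top entry of $\mathbf b_p$. We must also handle the boundary case in which $\mu\in\cEm$ has top block $(g)$: there all of $\cEp$ of order at most $n$ lies below $\mu$, which gives $U_n$ plus a correction of lower order.

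\textbf{Step 2: compare with $vX$ digit by digit.} Express $X=\mu\dH$ in the same block form. Each single block $(c)$ at position $k$ contributes $cH_k$, and each block $(c,g)$ occupying positions $k,k+1$ contributes $cH_k+gH_{k+1}$. The key identity is that the count of tuples strictly below a block at position $k$ equals $v$ times that block's $H$-value, up to $O(\beta_0^k)$. For a single block $(c)$ the count is $cU_{k-1}$ and the value is $cH_k$. For $(c,g)$ the count is $gU_{k}+cU_{k-1}$, since all $g$ single blocks at position $k+1$ precede it, and the value is $gH_{k+1}+cH_k$. Both follow from Corollary \ref{cor:un}, $U_{k-1}=vH_k+O(\beta_0^{k})$. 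Summing over the blocks of $\mu$ gives
\[\#\Tp(X)=vX+O\Big(\sum_{k=1}^n\beta_0^k\Big)+O(1),\]
where the boundary $\cEm$ case is absorbed by the same estimate, because a top block $g$ at position $1$ is just $gH_1$ with count $O(1)$. The final claim $\#\Tp(X)=vX+O(E(X))$ then follows from Lemma \ref{lem:m}, whose bound $n\le\log_\al X+O(1)$ gives $\sum_{k\le n}\beta_0^k=O(E(X))$.

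The main obstacle is the bookkeeping in Step 2. We must check that the counts of lexicographically smaller tuples in $\cEp$ match $v\cdot(\text{block value})$ exactly up to $O(\beta_0^k)$, including the index shift for two-entry blocks. The $O(1)$ constants must also not accumulate beyond $\sum\beta_0^k$, which holds because each of the at most $n$ blocks contributes a bounded multiple of $\beta_0^k$. If the direct block identity proves awkward, the fallback is induction on $n$. Write $X=X'+(\text{top block value})$ with $X'<H_{j+1}$, apply the induction hypothesis to $X'$, and apply Corollary \ref{cor:un} to the top block. This yields the recursive error bound $\mathrm{err}(X)\le\mathrm{err}(X')+C\beta_0^{n}$ directly.
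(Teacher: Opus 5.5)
Your proposal is correct and is essentially the paper's argument. The paper inducts on $n$ and peels off the top block with exactly your counts, $aU_{n-1}$ for a top block $(a)$ and $gU_{n-1}+aU_{n-2}$ for a top block $(a,g)$; it applies the induction hypothesis to $X$ minus that block's value and Corollary \ref{cor:un} to the block itself, which is your fallback, and unrolling it gives your blockwise sum. One wording slip: for $\mu\in\cEm$ the block $(g)$ is the bottom block at position $1$, not the top, and your Step 2 treatment of it (count $g$ against value $gH_1$) is the correct one.
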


\begin{proof}
 We use induction on $n$.
  By choosing the constant of the big-$O$ notation sufficiently large,
  we see that the statement is true for all $n\le 3$.
 Suppose that there is $n\ge 3$ such that the statement is true for all $m  <n$.
 
 Consider the case $X=\ep\dH$ such that $\ep=(\vEc w: a)$ where 
 $\vEc w \in \cEp$ and $\ord(\ep)=n$.  Let $x:=\delta\dH\le X$ be an integer
 where $\delta=(\vEc w' : b)$, $\vEc w'\in\cEp$, $\ord(\vEc w')\le n-1$, and $0\le b<a$.
 Then, by Proposition \ref{prop:Un},  the number of such $x$ is $aU_{n-1}$.
Let $y:=\mu\dH\le X$ be an integer
 where $\mu=(\vEc w' : a)$ and $\vEc w'\in\cEp$.
  Then, $aH_n \le y \le X$, and $0\le y - a H_n \le X- aH_n$.
By the induction hypothesis,   the number of such $y$ is 
$v(X-a H_n) +  O(\sum_{k=1}^{n-1} \beta_0^k) $.
Thus, the total number is 
\GGG{
aU_{n-1}  + v(X-a H_n) +  O\left(\sum_{k=1}^{n-1} \beta_0^k\right) 
=vX + a( U_{n-1} -v H_n )
	+  O\left(\sum_{k=1}^{n-1} \beta_0^k\right).\\
	\intertext{By Corollary \ref{cor:un},} 
	\#\Tp(X) = vX +O(\beta_0^{n-1}) 
+   O\left(\sum_{k=1}^{n-1} \beta_0^k\right)
	= vX  +   O\left(\sum_{k=1}^{n } \beta_0^k\right).
}

 Consider the case $X=\ep\dH$ such that $\ep=(\vEc w: a,g)$ where 
 $\vEc w \in \cEp$ and $\ord(\ep)=n$. 
 The positive integers that are less than or equal to $ X$ are given by
 $\delta\dH$ where $\delta=(\vEc w':b)\in\cEp$  and $0\le b<g $,
 $\mu\dH$ where
  $\mu=(\vEc w':b,g)$, $\ord(\mu)=n$, and $0\le b<a$,
  and integers $y$ such that $aH_{n-1} + gH_n \le y\le X$.
By the induction hypothesis, we obtain
\AAA{
  \#\Tp(X) &= g U_{n-1} + a U_{n-2} + v(X-aH_{n-1} - gH_n) 
  +O\left(\sum_{k=1}^{n-2 } \beta_0^k\right)\\
 & =vX +O\left(\sum_{k=1}^{n } \beta_0^k\right).
  }

\end{proof}

The proof of the next corollary is similar to that of Theorem \ref{thm:S(X)} where 
  we constructed injections around $S(X)$, and we leave it to the reader:
\begin{cor} \label{cor:Sp}
For $n\in\nat$, we have 
$\#\Sp(X) =v r  X + O(\sum_{k=1}^n\beta_0^n)$
if $H_n\le X<H_{n+1}$.
In particular, $ \#\Sp(X) =v r  X + O(E(X))$.
\end{cor}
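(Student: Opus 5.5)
We will follow the proof of Theorem \ref{thm:S(X)} verbatim, with $\Sp$ and $\Tp$ in place of $S$ and $T$, and then feed in Theorem \ref{thm:Tp(X)}. The map $\ep\mapsto\ep$ is the identity on $\cEp$, and membership in $\cEp$ does not depend on the base sequence. It therefore suffices to show that the two inequalities comparing $\ep\dG$ and $\ep\dH$ used there hold unchanged for $\ep\in\cEp$. They do, since they only use $\ep_k\le M$ and the Binet comparison (\ref{eq:Gk-Hk}).

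First we fix $C_1$ with $\left|G_k-\tfrac{a_1}{a_0}H_k\right|<C_1\beta_0^k$ for all $k$. For $\ep\in\Sp(X)$ with $m=\ord(\ep)$ we get
\[
\ep\dH<\tfrac{a_0}{a_1}\ep\dG+C_1M\tfrac{a_0}{a_1}\sum_{k=1}^m\beta_0^k .
\]
By Lemma \ref{lem:m}, $m\le\log_\al X+O(1)$. We do not use only the cruder bound $O(E(X))$ here. Instead we keep the sum $\sum_{k=1}^{m}\beta_0^k$ and note that $m\le n+O(1)$ when $H_n\le X<H_{n+1}$, because $H_n\asymp\al^n$. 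Hence $\sum_{k=1}^m\beta_0^k=O(\sum_{k=1}^n\beta_0^k)$ with an implied constant independent of $n$. This gives an injection
\[
\Sp(X)\hookrightarrow \Tp\!\left(rX+E'\right),\qquad E'=O\!\left(\sum_{k=1}^n\beta_0^k\right).
\]
The symmetric estimate gives an injection $\Tp(rX-E'')\hookrightarrow\Sp(X)$ with $E''$ of the same size.

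Next we apply Theorem \ref{thm:Tp(X)} at the arguments $rX\pm E'$. These lie in $[H_{n'},H_{n'+1})$ for some $n'=n+O(1)$, since $r$ is a fixed constant. Theorem \ref{thm:Tp(X)} then gives $\#\Tp(rX\pm E')=v(rX\pm E')+O(\sum_{k=1}^{n'}\beta_0^k)$. Because $n'-n$ is bounded, $\sum_{k=1}^{n'}\beta_0^k=O(\sum_{k=1}^n\beta_0^k)$, and $vE'$ is absorbed into the same error. Sandwiching $\#\Sp(X)$ between the two counts yields $\#\Sp(X)=vrX+O(\sum_{k=1}^n\beta_0^k)$. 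The stated exponent $\beta_0^n$ inside the sum in the statement should be read as $\beta_0^k$, matching Theorem \ref{thm:Tp(X)}.

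Finally, the second assertion follows from the second half of Lemma \ref{lem:m}, which gives $\sum_{k=1}^n\beta_0^k=O(E(X))$ since $n\le\log_\al X+O(1)$.

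The only point needing care is the index bookkeeping. The error terms are indexed by $n$ but evaluated at the shifted arguments $rX\pm E'$, and we must check that this shift changes the index by $O(1)$ and so changes the partial geometric sums only by a bounded factor. This is clear in each of the cases $\beta_0<1$, $\beta_0=1$ and $\beta_0>1$. For small $X$ we absorb everything into the constants, as in the base case of Theorem \ref{thm:Tp(X)}.
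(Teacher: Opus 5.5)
Your proposal is correct and follows the route the paper intends. The paper only says the proof is ``similar to that of Theorem~\ref{thm:S(X)}'' and leaves it to the reader. You rerun that two-sided injection argument with $\Sp,\Tp$ in place of $S,T$ (the identity map preserves $\cEp$), then apply Theorem~\ref{thm:Tp(X)} at the shifted arguments $rX\pm E'$, and your bookkeeping for the bounded index shift is sound.

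You are also right to read the summand $\beta_0^n$ as $\beta_0^k$. Taken literally, the statement would force the error to tend to $0$ when $\beta_0<1$, which fails already for $G=H$.
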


 \begin{deF}\rm \label{def:Gm}
Given $m\in\nat$, let $\Sp^m(X)$ 
be the subset of $\Sp(X)$ consisting of $
\ep $ such that 
$\ep_k=0$ for all $1\le k\le m-1$ and
$(\ep_m,\ep_{m+1},\dots)\in \cEp$, e.g.,
$(0,0,g)\not\in \Sp^3(X)$ for any $X$, but
$(0,0,0,g)\in \Sp^3(X)$ for large $X$.

\end{deF}

The proof of the next corollary can be obtained 
from the following observation  
and Corollary \ref{cor:Sp}, and we leave it 
to the reader.
If $\ep\in \Sp^m(X)$ and $\vEc w=(\ep_m,\ep_{m+1},\dots)\in \cEp$,
then
\AAA{
 \ep\dG &= \sum_{k=m}^\infty \ep_k G_k=
 \sum_{k=m}^\infty \ep_{k} a_1 \al^k+ O(E(X))\\
&=\al^{m-1}\sum_{k=m}^\infty\ep_{k} a_1 \al^{k-m+1}+ O(E(X))
=
\al^{m-1} (\vEc w\dG) + O(E(X)).
} 
 
\begin{cor} \label{cor:Gm}

For $m\in\nat$, we have 
$\#\Sp^m(X) =  vr\ome^{m-1} X + O(E(X))$. 
\end{cor}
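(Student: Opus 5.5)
Write $\vEc w:=(\ep_m,\ep_{m+1},\dots)$ for $\ep\in\Sp^m(X)$. The map $\ep\mapsto\vEc w$ is a bijection from $\Sp^m(X)$ onto a set of tuples in $\cEp$; its inverse prepends $m-1$ zeros, and the result stays in $\cEp$ because the prepended zeros are the \spb s $(0)$. So I only need to control the condition $\ep\dG\le X$ in terms of $\vEc w$.

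\textbf{Comparing $\ep\dG$ with $\vEc w\dG$.} By (\ref{eq:G-binet}), $G_{k+m-1}=\al^{m-1}G_k+O(\beta_0^{k+m-1})$ uniformly in $k$. Summing against the bounded coefficients $\ep_k\le M$ over $k\le\ord(\ep)$ gives
\[
\ep\dG=\al^{m-1}(\vEc w\dG)+O\Big(\sum_{k=1}^{\ord(\ep)}\beta_0^k\Big)=\al^{m-1}(\vEc w\dG)+O(E(X)).
\]
The last step uses Lemma \ref{lem:m}, which bounds $\ord(\ep)\le\log_\al X+O(1)$ for $\ep\in S(X)$. Here $m$ is fixed, so the implied constants may depend on $m$ and on $G$.

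\textbf{Sandwiching.} Fix a constant $C$ from the comparison above. As in the proof of Theorem \ref{thm:S(X)}, I construct two injections. The first maps $\Sp^m(X)$ into $\Sp(\ome^{m-1}X+C\,\ome^{m-1}E(X))$ via $\ep\mapsto\vEc w$. The second maps $\Sp(\ome^{m-1}X-C'E(X))$ into $\Sp^m(X)$ by prepending zeros. The second injection also needs the comparison in the reverse direction for tuples $\vEc w\in\Sp(Y)$. This follows the same way, since the order bound of Lemma \ref{lem:m} applies equally to $\vEc w$ with $Y\le X$.

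\textbf{Main obstacle.} Applying Corollary \ref{cor:Sp} to both bounds gives $vr(\ome^{m-1}X\pm O(E(X)))+O(E(\ome^{m-1}X))$. The step needing care is showing $E(\ome^{m-1}X\pm O(E(X)))=O(E(X))$, including for small values where the argument is near zero. This holds because $E$ is monotone and grows at most polynomially with exponent $\gamma<1$; in each of the three cases for $\beta_0$, a constant factor in the argument changes $E$ only by a constant factor. Also, the shift $O(E(X))$ in the argument contributes only $vr\,O(E(X))$ to the count. Hence $\#\Sp^m(X)=vr\ome^{m-1}X+O(E(X))$.
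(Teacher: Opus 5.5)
Your proposal is correct and takes the paper's route: the paper leaves this proof to the reader, indicating only the comparison $\ep\dG=\al^{m-1}(\vEc w\dG)+O(E(X))$ together with Corollary \ref{cor:Sp}. Your two-sided injection argument, modeled on Theorem \ref{thm:S(X)}, together with your check that $E$ changes only by a constant factor under the shifted and rescaled arguments, fills in exactly the details the paper omits.
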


\begin{deF}
\label{def:Rv}\rm
Let $\vEc v\in\cE$.
Define 
$$S(\vEc v:X):=\set{\ep=(\vEc v: \vEc w) : \vEc w\in\cEp,\ \ep\dG\le X}.$$
\end{deF}

Proposition \ref{prop:S(v,X)} follows immediately from Corollary \ref{cor:Gm}.
\begin{prop} \label{prop:S(v,X)}
Let $\vEc v\in\cE$ such that $m:=\ord(\vEc v)\ge 1$.
Then, $$\#S(\vEc v:X) = vr\ome^{m} X + O(E(X)).$$
\end{prop}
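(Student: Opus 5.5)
The plan is to reduce $S(\vEc v:X)$ to one of the sets $\Sp^{m+1}(\,\cdot\,)$ of Definition \ref{def:Gm} by subtracting off the fixed prefix $\vEc v$, and then to apply Corollary \ref{cor:Gm}.

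First, fix $\vEc v\in\cE$ with $m=\ord(\vEc v)\ge 1$, and set $c:=\vEc v\dG$. This is a constant independent of $X$. For $\vEc w\in\cEp$, the concatenation $\ep=(\vEc v:\vEc w)$ has $\ep[1,m]=\vEc v$ and $\ep[m+1,\infty)=\vEc w$. Define
\[
\phi(\ep):=(0,\dots,0,\vEc w),
\]
with $m$ leading zeros. Since $(0)$ is a \spb, $\phi(\ep)$ is a concatenation of \spb s, so $\phi(\ep)\in\cEp$. Its first $m$ entries vanish and $\phi(\ep)[m+1,\infty)=\vEc w\in\cEp$. Hence $\phi(\ep)$ satisfies the defining conditions of $\Sp^{m+1}$ whenever its value is small enough. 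By linearity of the evaluation,
\[
\ep\dG=\vEc v\dG+\phi(\ep)\dG=c+\phi(\ep)\dG .
\]

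Second, I will check that $\phi$ is a bijection from $S(\vEc v:X)$ onto $\Sp^{m+1}(X-c)$. Injectivity is clear, because $\vEc w$ is recovered from $\phi(\ep)$. For surjectivity, take any $\tau\in\Sp^{m+1}(X-c)$ and write $\vEc w:=\tau[m+1,\infty)\in\cEp$. Then $(\vEc v:\vEc w)\in\cE$: if $\vEc v\in\cEp$ the concatenation stays in $\cEp$, and if $\vEc v\in\cEm$ it stays in $\cEm$ by Definition \ref{def:Z-collection}. Moreover its value is $c+\tau\dG\le X$, and its image under $\phi$ is $\tau$. Therefore
\[
\#S(\vEc v:X)=\#\Sp^{m+1}(X-c).
\]

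Third, Corollary \ref{cor:Gm} with index $m+1$ gives
\[
\#\Sp^{m+1}(X-c)=vr\ome^{m}(X-c)+O\big(E(X-c)\big).
\]
The term $vr\ome^m c$ is a constant, so it is absorbed into $O(E(X))$ because $E(X)\ge 1$ for large $X$. It remains to replace $E(X-c)$ by $E(X)$. This holds since $E$ is nondecreasing, so $E(X-c)\le E(X)$ for $X>c$, and for the finitely many $X\le c$ the set $S(\vEc v:X)$ is empty and the estimate holds trivially after enlarging the implied constant.

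The only mildly delicate point is the bookkeeping in the second step: confirming that $\phi(\ep)$ satisfies exactly the membership conditions of Definition \ref{def:Gm} (vanishing of the first $m$ entries, and tail in $\cEp$ starting at position $m+1$), and that the colon notation $(\vEc v:\vEc w)$ always yields an element of $\cE$. Once that is in place, the rest is a direct quotation of Corollary \ref{cor:Gm}.
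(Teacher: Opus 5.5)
Your argument is correct and is essentially the reduction the paper has in mind when it says the proposition follows immediately from Corollary \ref{cor:Gm}: the bijection $(\vEc v:\vEc w)\mapsto(0,\dots,0,\vEc w)$ onto $\Sp^{m+1}(X-\vEc v\dG)$, then Corollary \ref{cor:Gm}, then absorbing the constant shift into $O(E(X))$. One slip is cosmetic: for $X=\vEc v\dG$ the set $S(\vEc v:X)$ contains $\vEc v$ itself and so is not empty, but it is bounded for all $X\le \vEc v\dG$, which is all you need.
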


The last columns of the tables in \appone\ and Examples \ref{sec:exm-II} and \ref{sec:exm-III} list the limiting ratios of the number of $\ep'\in \cE$ in the given form such that $\ep'\dG\le X$.  They are given by the formula in Proposition \ref{prop:S(v,X)}.

\begin{proof}[Proof of Theorem \ref{thm:main-1}]
We use Proposition \ref{prop:S(v,X)}.
By (\ref{eq:R(X)}), (\ref{eq:Vstar}),  and (\ref{eq:v:w}), we have 
\begin{align}
\#R(X) &= rX - \sum_{\vEc v \in V} \#S(\vEc v : X) + O(E(X))\notag\\
&= rX\left( 1- v \sum_{\vEc v \in V} \ome^{\ord(\vEc v)} \right)+ O(E(X)).\notag\\
&=\frac{r}{1+\ome}X\left(1+\ome-  \sum_{\vEc v \in V} \ome^{\ord(\vEc v)} \right)+ O(E(X)).
\label{eq:R(X)-2}
\end{align} 
Since $\ord(\vEc v)\le 5$ for all $\vEc v\in V$, this 
completes the proof of the first statement.
\end{proof}

\subsection{The examples} \label{sec:the-exm}  

In this section, we  revisit the examples considered in Sections \ref{sec:exm-I}, \ref{sec:exm-II},
and \ref{sec:exm-III}, and estimate the value of $\#R_G(X)$.
The following identity will be used to simplify expressions in terms of $\ome^k$:
\GGG{
\ome^n = g \ome^{n+1} + h  \ome^{n+2},\ \forall n\in\zz.
}
 
\begin{example}\rm \label{exm:(2,1)}
Consider the case $(g,h)=(2,1)$.  Then, \appone\ shows all tuples $\vEc v$ in reduced expressions that 
we need to consider for the sum described in (\ref{eq:R(X)-2}).
Shown
under the column Ratio in \appone\ are the limiting ratios of $
  \#S(\vEc v: X)/X$, which is $vr\ome^{\ord(\vEc v)}$.
  
For example, if $G_1=2G_2$, i.e., $(G_1,G_2)=(2,1)$, then the table lists five tuples
$\vEc v$, which are $(1,0)$, $(2,0)$, $(2,1)$, $(1,0,2)$, and $(2,0,2)$.
Thus,
    \GGG{
    \sum_{\vEc v \in V} \ome^{\ord(\vEc v)}
    = 3\ome^2 + 2\ome^3=(2\ome^2 +  \ome^3)+ \ome^2 +  \ome^3\\
    =\ome+ \ome^2 +  \ome^3.
    }
This expression is found in the far right column of \appone.
The three forms $(2:0:\star)$, $(2:1:\star)$, and $(2:0,2:\star)$ can be reduced to $(2:\star)$, 
but as their associated weights $w$ are different, they were not combined to $(2:\star)$.

Therefore, if $G_1=2G_2$, i.e., $(G_1,G_2)=(2,1)$, then
\begin{gather}
1+\ome-  \sum_{\vEc v \in V} \ome^{\ord(\vEc v)}=
1+ \ome-(\ome+ \ome^2 +  \ome^3)\HSW{.3} \notag\\
=1- \ome^2 - \ome^3=2\ome - \ome^3=\ome+2\ome^2 =\ome(1+2\ome) , \notag\\
\frac{r }{1+\ome} \left( 1+\ome-  \sum_{\vEc v \in V} \ome^{\ord(\vEc v)}\right)
=\frac{r\ome(1+2\ome)}{1+\ome}=\frac{3 +\ome}{1+ 2\ome}\frac{\ome(1+2\ome)}{1+\ome}=1
  \label{eq:=1}\\
\implies \#R(X)= X + O(1). \notag
\end{gather}
Computer calculation suggests that $\#R(X) = X$ for all $X\in\nat$.

Listed below are the ratios predicted by  Theorem \ref{thm:main-2}, Part (1) and the actual ratios for $X=10^4$:
\begin{equation*}
\begin{array}{| c | c | c |}
\hline\VS{1.1em}
\text{Equality} & \text{Formula ratio} & \text{Actual ratio} \\
\hline
2G_1 = G_2 & 1 & 1 \\
\hline
 G_1 =2 G_2 & 1 & 1 \\
\hline
 G_1 =3 G_2 & 1 & 1 \\
\hline
 2G_1=3G_2& 1 & .9999 \\
\hline
 G_1 =4 G_2 & 1 & .9999  \\
\hline
  G_1 =5 G_2 & 1 & .9998  \\
\hline
\end{array} .
\end{equation*}
According to the computer calculations, for the case $2G_1=3G_2$, $R(X)$ is missing $1$,
for the case $ G_1=4G_2$, $R(X)$ is missing $3$, and for the case $ G_1=5G_2$, $R(X)$ is missing $3$ and $4$.

In Section \ref{sec:g-golden}, we use a different approach to 
prove that the formula simplifies to $1$ in this case. 
However, the way the right-hand side of the formula in Theorem \ref{thm:main-1} 
simplifies to $1$ in \eqref{eq:=1} strikes us as a combinatorial miracle.
\end{example}

\begin{example}\rm \label{exm:(1,2)}
Consider the case $(g,h)=(1,2)$.  Then, the table in Section \ref{sec:exm-II} shows all tuples $\vEc v$ in reduced expressions that 
we need to consider for the sum described in    (\ref{eq:R(X)-2}).
By repeating calculations as in Example \ref{exm:(2,1)}, we obtain   the ratios
 predicted by  Theorem \ref{thm:main-2} (2) and the actual ratios for $X=10^4$:

\begin{equation*}
\begin{array}{| c | c | c |}
\hline\VS{1.1em}
\text{Equality} & \text{Formula ratio} & \text{Actual ratio} \\
\hline 
 G_1 =2 G_2 & .8333 & .8333 \\
 \hline
 G_1 =3 G_2 & .6875 & .6873 \\
\hline
\end{array} .
\end{equation*} 

For the case $(g,h)=(1,3)$, we have the following: 

\begin{equation*}
\begin{array}{| c | c | c |}
\hline\VS{1.1em}
\text{Equality} & \text{Formula ratio} & \text{Actual ratio} \\
\hline 
  G_1 =2 G_2 & .7591 & .7596 \\
 \hline
 G_1 =3 G_2 & .5961 & .5953 \\
 \hline
 G_1 =4 G_2 & .5014 & .5004 \\
\hline
\end{array} .
\end{equation*} 
The error term  for this case is $O(X^{\log_\al\beta_0})= O(X^{0.3171})$, 
and compared to the other cases, higher differences between the ratios are observed.
\end{example}

 \section{The $g$-golden ratio recurrence} \label{sec:g-golden}
 
 We prove Theorem \ref{thm:g-golden} in this section.
Notice that  Theorem \ref{thm:main-2} (1) proves Theorem \ref{thm:g-golden} for $g=2$. Thus,
 we prove the case $g\ge 3$ and $h=1$.
 
 Throughout this section, let $ \vEc w$ and $\vEc w^k $ denote  tuples in $\cEp$ where $k\in\nat_0$.
Using the notation $\vEc w^{k+1}$, we indicate that it is a tuple potentially 
 different from $\vEc w^k$;
 especially, $\vEc w^k$ does not denote the $k$th power in any way.
 For example, if $0\le   \vEc w_1<g-1$, then
  we may write $G_1 + \vEc w\dG = \vEc w^1 \dG$, i.e., 
 if $\vEc w = (a:\star)$ where $a<g-1$, then $\vEc w^1 = (a+1: \star)$, and 
 if $\vEc w = (0,g:\star)$, then $\vEc w^1 = (0: \star)$, which includes both possibilities
$\vEc w^1 = (0: 0:\star)$ and $\vEc w^1 = (0: 0,g: \star)$.

For any integers $a_1,\dots,a_n$, we define 
\GGG{
[ a_1,\dots,a_n ]:=
\sum_{k=1}^n a_k G_k.
}
If a colon is used instead of a comma in between
$a_k$ and $a_{k+1}$,
 it indicates that $(a_{k+1},\dots, a_n)\in \cEp$, e.g.,
 \GGG{
 [g,g-1,g-2,1]= [g:g-1,g-2,1]=
 [g:g-1:g-2,1].
 }
 If $\vEc w=(\vEc w_1,\dots, \vEc w_m)\in \cEp$, then we also define
 \GGG{
 [a_1,\dots,a_n : \vEc w]:=
  [a_1,\dots,a_n, \vEc w_1,\dots, \vEc w_m]\in\zz.
  }
  
The strategy for proving  the theorem is to show that given a sufficiently large $\delta\in\cE$, 
there is $\ep\in\cE$ such that 
\begin{equation}
1+\delta\dG = \ep \dG.
\label{eq:1+delta}
\end{equation}
Once established, this relation implies by induction that 
$\#R_G(X) = X + O(1)$.

\subsection{Case I}
We consider the case
 $(G_1,G_2)=(q,1)$ in this section.
Our analysis depends on the decomposition structure of the first entry of $\delta$, as described in \eqref{eq:1+delta}.
 Lemma \ref{lem:add-Gn} implies
\begin{equation} 
\begin{gathered}
  \vEc w_1\ne g-1 
\implies
1+[a: \vEc w] =G_2+[a: \vEc w]
=[a: \lub_\cE(\vEc w)]\in R_G,\\ 
0<a\le g\implies 
1+[a:g-1:\vEc w] = [a,g,\vEc w]=[a-1:\vEc w^1]\in R_G,\\
1+[0:g-1:\vEc w] = [0,g:\vEc w] \in R_G .
\end{gathered}\label{eq:case-1} 
\end{equation}
The only decomposition structure  that is not considered in (\ref{eq:case-1}) is 
 $$\delta:=(0,g:\vEc w),$$ which we analyze this case below.
If $q\le g+1$, then 
$$1+[0,g:\vEc w ]=[1:g+	1-q:\vEc w]\in R_G$$
where $g+1-q\le g-1$.
Thus, we have 
\begin{theorem}
If $(G_1,G_2)=(q,1)$ and $2\le q\le g+1$, then 
$\#R_G(X)=X$.
\end{theorem}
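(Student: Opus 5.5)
We prove the statement by induction on $n$: every $n\in\nat_0$ lies in $R_G\cup\set 0$, i.e., $n=\ep\dG$ for some $\ep\in\cE$. Then $R_G(X)=\set{1,\dots,X}$ and $\#R_G(X)=X$. The base case is $0=\mathbf 0\dG$. For the inductive step, we show that whenever $n=\delta\dG$ with $\delta\in\cE$, there is $\ep\in\cE$ with $\ep\dG=1+\delta\dG$, as in \eqref{eq:1+delta}. Since $G_2=1$, adding $1$ amounts to adding $G_2$. We therefore split according to the \spb\ structure of the first one or two blocks of $\delta$, exactly as in \eqref{eq:case-1}.

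The case analysis runs as follows. Write $\delta$ as either $(a:\vEc w)$ with $0\le a\le g$ (the value $a=g$ covers $\delta\in\cEm$), or $(b,g:\vEc w)$ with $0\le b\le h-1=0$. Since $h=1$, the second form is just $(0,g:\vEc w)$.

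First, if $\delta=(a:\vEc w)$ with $\vEc w_1\ne g-1$, Lemma \ref{lem:add-Gn} with $n=2$ gives $G_2+(a:\vEc w)\dG=(a,\lub_\cE(\vEc w))\dG$. This lies in $\cE$ because the first entry of $\lub_\cE(\vEc w)$ is $\vEc w_1+1\le g-1$ or a zero carry, and the prefix $(a)$ remains a legitimate block or the $\cEm$ head. One should check the case $\vEc w=(h-1,g:\cdots)=(0,g:\cdots)$, where the lub resets the entries to $0$ and carries; Example \ref{exm:lub} describes this.

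Second, if $\delta=(a:g-1:\vEc w)$ with $a\ge1$, then $1+\delta\dG=[a,g:\vEc w]$. Using $G_3=gG_2+G_1$, this equals $[a-1,0:\vEc w]+G_3$. By Lemma \ref{lem:add-Gn} with $n=3$, it becomes $[a-1,0,\lub_\cE(\vEc w)]=[a-1:\vEc w^1]$, which is in $\cE$ since $a-1\le g-1$. If $a=0$, then $(0,g:\vEc w)$ is already a legitimate concatenation in $\cEp$.

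Third, the remaining structure is $\delta=(0,g:\vEc w)$. Here $1+\delta\dG=G_2+gG_2+[0,0:\vEc w]=(g+1)G_2+[0,0:\vEc w]$. With $G_1=q$, we rewrite $(g+1)G_2=G_1+(g+1-q)G_2$, giving $[1,g+1-q:\vEc w]$. For $2\le q\le g+1$ we have $0\le g+1-q\le g-1$, so $(1:g+1-q:\vEc w)$ is a concatenation of \spb s and lies in $\cEp\subset\cE$.

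The main obstacle is verifying membership in $\cE$ in each case, that is, that the new leading entries together with the modified $\vEc w$ still decompose into \spb s. The delicate points are when $\vEc w$ begins with $(0,g)$ and the carry propagates through the $\lub_\cE$, and when $a=g$ places us in $\cEm$, where the leading $g$ must be followed by an element of $\cEp$. I would handle these by appealing to Lemma \ref{lem:add-Gn} for the precise form of the right-hand side, including its explicit description when $\vEc w_1=g-1$. Once every $\delta$ has a successor expansion, the induction shows that every positive integer is expanded. Hence $\#R_G(X)=X$ exactly, with no error term.
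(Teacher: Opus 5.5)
Your proof is correct and follows the paper's argument essentially verbatim. Like the paper, you use the three relations in \eqref{eq:case-1}, obtained from Lemma~\ref{lem:add-Gn}, to cover every $\delta$ except $(0,g:\vEc w)$, dispatch that case by $1+[0,g:\vEc w]=[1:g+1-q:\vEc w]$, and induct from $0=\mathbf 0\dG$. Two small nits arise only when $g=1$, and neither affects the argument: the final tuple $(1,0:\vEc w)$ lies in $\cEm$ rather than $\cEp$, and $\vEc w$ may begin with the block $(0,1)$, so your first case should read ``the first block of $\vEc w$ is not the single block $(g-1)$'' rather than $\vEc w_1\ne g-1$.
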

The remaining case is proved by Proposition \ref{prop:[0,g]} below.
\begin{prop} \label{prop:[0,g]}
Let $g\ge 3$, and let  $g+2\le q\le g^2+1$.
If $[0,0:\vEc w ]\ge G_4$, then
 $1+[0,g:\vEc w]\in R_G$.
\end{prop}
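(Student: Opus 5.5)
The computation starts by writing $1+[0,g:\vEc w]=(g+1)G_2+[0,0:\vEc w]=(g+1)+[0,0:\vEc w]$, since $G_2=1$. The integer $g+1$ cannot be absorbed into the first two entries directly: the only admissible block containing a $g$ at position $2$ is $(0,g)$, and $G_1=q\ge g+2$. So the plan is to borrow a higher term $G_k$ from $[0,0:\vEc w]$ and then redistribute $g+1+G_k$ over the positions $1,2,3$ (and possibly $4$). The hypothesis $[0,0:\vEc w]\ge G_4$ is what makes the borrowing possible.

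\textbf{Borrowing.} Since $\vEc w\ne\mathbf 0$, I would apply Lemma \ref{lem:add-Gn} in reverse, working inside $\cEp$ with the zero prefix of length $2$ or $3$. By the well-ordering of $\cE$ (Corollary \ref{cor:well-ordered}), $\vEc w$ has a predecessor $\vEc w'$ in $\cEp$, and then $[0,0:\vEc w]=G_3+[0,0:\vEc w']$. Similarly, when the entry of $\vEc w$ at position $4$ (or the block beginning there) allows it, I would write $[0,0:\vEc w]=G_4+[0,0,0:\vEc w'']$. The condition $[0,0:\vEc w]\ge G_4$ guarantees that at least one of these two borrowings is available. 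It also guarantees that after borrowing $G_3$ the remainder is still nonzero, so a second borrow is possible if needed.

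\textbf{Solving the small system.} Using $G_3=g+q$ and $G_4=g^2+gq+1$, the target becomes the following. In the first case we must represent $N_3:=q+2g+1$ as $aq+b+c(g+q)$. In the second case we must represent $N_4:=gq+g^2+g+2$ as $aq+b+c(g+q)$, where $(a,b,c)$ is admissible in $\cE$ and concatenates correctly with the untouched tail. Admissibility for $h=1$ means: $a\le g$; $b\le g-1$, or $b=g$ only as the block $(0,g)$; $c\le g-1$, or $c=g$ only together with $b=0$.

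The range $g+2\le q\le g^2+1$ enters through the remainder $g^2+g+2-q\in[g+1,\,g^2-g]$. I would write this remainder as $cg+b$ with $0\le b\le g-1$, take the coefficient of $q$ to be $a+c=g+1$, and check that $c\le g-1$ and $a=g+1-c\le g$. This is exactly where the lower bound $q\ge g+2$ (so $c\ge 1$) and the upper bound $q\le g^2+1$ (so $c\le g-1$) are used. The $G_3$-borrow (from $N_3$) will be used when the entry of $\vEc w$ at position $4$ forces it; there the analogous computation splits $q+2g+1$ with one copy of $q$ absorbed into $G_3$.

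\textbf{Main obstacle.} The hardest part is compatibility at the seam, position $3$ or $4$, between the new coefficients and $\vEc w'$ or $\vEc w''$. The predecessor operation can produce a leading block $(g-1)$ or $(0,g)$ in $\vEc w''$. Moreover, placing $c$ at position $3$ followed by a block starting with $g$ at position $4$ is only legal if $c=0$. I would handle this by splitting into cases according to the block structure of the first entry of $\vEc w''$: $(x:\star)$ with $x\le g-1$, versus $(0,g:\star)$. In the bad subcase I would perform one more carry using $G_{k+2}=gG_{k+1}+G_k$, i.e.\ apply Lemma \ref{lem:add-Gn} forward, to shift the excess up to position $5$. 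I would then verify that the resulting tuple lies in $\cE$, so that $1+[0,g:\vEc w]\in R_G$.
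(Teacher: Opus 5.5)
Your borrowing dichotomy is correct once it is stated precisely. Let $\mathbf p$ be the predecessor of $\mathbf w$ in $\cE$.
If $\mathbf p\in\cEp$, then Lemma~\ref{lem:add-Gn} with $n=3$ gives $[0,0:\mathbf w]=G_3+[0,0:\mathbf p]$.
If $\mathbf p=(g:\mathbf u)\in\cEm$, then $\mathbf w=(0,\lub_\cE(\mathbf u))$, and $n=4$ gives $[0,0:\mathbf w]=G_4+[0,0,0:\mathbf u]$.
So a $G_3$-borrow does not always exist: for example, $\mathbf w=(0,1)$ has predecessor $(g,0)$.
In the $G_4$ branch there is no seam problem at all, because $\mathbf u\in\cEp$ opens a fresh block at position $4$. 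Your choice $a+c=g+1$, $cg+b=g^2+g+2-q$ works there, with three corrections:
the remainder ranges over $[g+1,g^2]$, not $[g+1,g^2-g]$;
it is $q\le g^2+1$ that gives $c\ge 1$, and $q\ge g+2$ that gives $c\le g$;
the endpoint $q=g+2$ produces $(1:0,g)$, which is still admissible.

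The genuine gap is the $G_3$ branch. For $q\ge 2g+2$, every non-negative solution of $aq+b+c(g+q)=q+2g+1$ has $b\ge g+1$, so a single $G_3$-borrow can never be completed. A forward carry cannot repair this: carrying $G_1+gG_2$ up to $G_3$ still leaves $(g+1)G_2$. You must borrow more, not carry.

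For instance, take $g=3$, $q=10$, $\mathbf w=(0,0,1)$. The predecessor is $(0,3)\in\cEp$, and the expansion $137=[3:1:2:2]$ comes from borrowing $G_3+G_4$. In general the number of $G_3$-borrows needed grows to $j_0:=\lfloor (q-2)/g\rfloor$, which can be as large as $g-1$. Your remark about a second borrow does not supply this.

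The missing step is an iteration.
Take successive predecessors $\mathbf p^{(j)}$ of $\mathbf w$ while they lie in $\cEp$. They stay nonzero for $j\le g-2$, since $[0,0:\mathbf w]-jG_3\ge G_4-(g-2)G_3>0$.
If you reach $j=j_0$, the tuple $(j_0+1:(j_0+1)g+1-q:\mathbf p^{(j_0)})$ works.
If instead the predecessor of $\mathbf p^{(j)}$ is $(g:\mathbf u)\in\cEm$ for some $j<j_0$, borrow $G_4$ and take $a+c=g+1+j$, $cg+b=g^2+(j+1)g+2-q$. Then $j_0g+2\le q\le (j_0+1)g+1$ forces $j+1\le c\le g$, with $b=0$ when $c=g$.

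With this added, your route gives a complete proof, and a simpler one than the paper's. The paper backtracks $\delta^\circ$ along the extended chains of diagram~(\ref{diag:ext-chains}) and absorbs the whole correction in $\ep_1$, via a case analysis on $q=g\ell'+r'$. As written, however, your proposal does not establish the case $q\ge 2g+2$.
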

 Although Proposition \ref{prop:[0,g]} can be proved by  \lq\lq borrowing\rq\rq\ from the higher entries, 
the diagrammatic representation below provides a more intuitive understanding of the underlying mechanics.
 It is a refined version of the diagram in Proposition \ref{prop:cFo-diagram}:
\begin{equation}
 \begin{tikzcd}
    \ar[swap]{d}{+1}   (0,0)   & (0,g) \ar[swap]{l}{+1}& \fbox{$(g,g-1)$}\ar[swap]{l}{+q}
	 &  (g,g-2) \ar[dotted,swap]{l}{\HS{1em}g+q}\\
     (1,0) \ar[swap,dotted]{d}{+(g-2)}&   &  &  (g,2)  \ar[swap,dotted]{u}{(g-4)(g+q)}\\
     (g-1,0) \ar[swap]{r}{+1} & \fbox{$(g,0)$}\ar[swap]{r}{+q} \ar[swap]{uul}{+q} 
     	& (0,1) \ar[swap,dotted]{r}{+g}
     	 & \fbox{$(g,1)$} \ar[dotted,swap]{u}{g+q}\\
\end{tikzcd} 
\label{diag:ext-chains}
\end{equation} 
  where the solid arrows indicate  $\lub_{\cEo}(\ep)[2,3]$, and 
  the dotted arrows indicate $\lub_{\cEo}^m(\ep)[2,3]$ with $m>1$. 
  
In fact, we seek $\ep$ that is \lex ly smaller than $\delta$ satisfying \eqref{eq:1+delta}. Because $\delta_1=0$ must hold, we show that there exists $\ep<\delta$ in $\cE$ such that
\begin{gather} 
  \ep_1 q - 1 = \delta^\circ \dG - \ep^\circ \dG. \label{eq:delta+1} 
\end{gather} 
To demonstrate this approach, consider the example where $q=g+2$.
If $\delta[2,3]=(g,1)$, which corresponds to the vertex at the lower right corner in 
(\ref{diag:ext-chains}), then we backtrack along the diagram to $\ep^\circ[2,3]:=(g-1,0)$, i.e.,
$$ \ep^\circ:=\lub_{\cEo}^{-g-2}(\delta^\circ).$$
This yields $ \delta^\circ \dG - \ep^\circ \dG= 1 + q + g = 2g+3$.
Setting $\ep_1=2$, 
we obtain
\GGG{
\ep_1q - 1  =2g+3 = \delta^\circ \dG - \ep^\circ \dG.}
Since $(\ep_1:\ep^\circ)=(2:g-1:\star)\in\cE$, we proved (\ref{eq:1+delta}).

 Let $\delta:=(0,g:\vEc w)$ be as in Proposition \ref{prop:[0,g]}.
 Then, $\delta[2,3]=(g,c)$, and 
 it
 is helpful to analyze the diagram in terms of the following sequences
 determined by the vertices $(g,x)$:
\begin{gather} 
 \begin{gathered}
 (g,a)\to (0,a+1) \to \cdots \to (g,a+1),\quad 0\le a <g-1,\\
 (g,0)\to (0,0) \to \cdots \to (g,0);
\end{gathered}  \label{eq:ext-short-chain}\\
  (g,g-1) \to (0,g)\to (0,0) \to \cdots \to (g,0).\label{eq:ext-long-chain}
\end{gather}
We refer to the two path types in \eqref{eq:ext-short-chain} as {\it extended short chains},
and the path type in \eqref{eq:ext-long-chain} as {\it the extended long chain}.
Recall the short and long chains from Definition \ref{def:chains}, which are obtained from the extended chains by removing their initial edges; consequently, the edge counts of these extended chains are $g+1$ and $g+2$,  respectively.
The   extended short chains have weight sum $g+q$, and the  extended long chain has weight sum $g+1+q$.
The initial and terminal vertices of the  extended long chain are boxed in the diagram, as is the terminal vertex of the extended short chain succeeding the long chain.
Along the long circuit, which is defined in Section \ref{sec:classification-sub}, there are $g-1$ extended short chains and one extended long chain, and 
the short circuit itself forms an extended short chain.

\begin{proof}[Proof of Proposition \ref{prop:[0,g]}]
Let   $\delta=(0,g:\vEc w)\in\cE$  sufficiently large.
 Suppose that there exists an $\ep$ satisfying \eqref{eq:1+delta}.
   Let $\gamma$ be the path in $\PathG$ determined by $\ep^\circ$ and $\delta^\circ$, 
   so $\wgt(\gamma)= \delta^\circ \dG - \ep^\circ \dG$.
Notice that $\ep < \delta $ implies
$\ep^\circ \le \delta^\circ$, and 
by Corollary \ref{cor:eval}, we have 
$\ep^\circ \dG \le \delta^\circ \dG$.
If equality holds, then \eqref{eq:delta+1} implies $\ep_1q=1$, which is impossible since $q\ge g+2$.
Thus, $ \delta^\circ \dG - \ep^\circ \dG> 0$, and hence, 
$\ep_1\ge 1$.

The diagram (\ref{diag:ext-chains}) is formed by 
joining extended chains.
Thus, $\ep^\circ[2,3]$ can be found in 
the short chain 
\GGG{
 (0,a) \to \cdots\to \ep^\circ[2,3] \to \cdots \to (g,a)\\
 \intertext{ where $0\le a<g$
 or in the long chain} 
  (0,g) \to (0,0)\to  \cdots\to \ep^\circ[2,3] \to \cdots \to (g,0).}
 Here, the vertex $\ep^\circ[2,3] $  may not coincide with the terminal vertex of the chain.
 If it  coincides, then 
   $ \ep^\circ[2,3]=(g,b)$ implies that $\ep_1=0$,
   which contradicts that $\ep_1\ge 1$.
 
 Let $(g,b)$ for $0\le b<g$ be the last vertex of the chain for both cases.
Let $y+t$ be  the weight sum of $\ep^\circ[2,3]\to\cdots\to (g,b)$  where 
$0<y\le g$, $t\in\set{0,1}$, 
and $t=1$ if and only if  $ \ep^\circ[2,3]=(0,g)$ (and hence, $y=g$).

Notice that $(g,b)$ is the initial vertex of some extended chain, and 
$\delta^\circ[2,3]$ is the terminal vertex of some extended chains.
Let $\ell$ be the number of copies of the extended chains in 
the sub-path  $\gamma_0 : (g,b) \to \cdots \to \delta^\circ[2,3]$, and 
let $s $ be the number of extended long chains contained in $\gamma_0$.
If $\gamma_0$ has two extended long chains, then the 
diagram (\ref{diag:ext-chains}) implies that
 $\wgt(\gamma)$ is at least the weight of  a long circuit, which 
contradicts Lemma \ref{lem:gamma-length}.
Thus, $s\in\set{0,1}$.
Then, $\delta^\circ\dG- \ep^\circ \dG =y + t+  \ell(g+q) + s $.

We claim that $s=t=1$ is impossible.
Notice that $s=t=1$ implies that 
the path $\gamma$ begins at the long chain and contains
an extended long chain. 
From the diagram (\ref{diag:ext-chains}), it follows that 
$\wgt(\gamma)$ is at least the weight of a long circuit, which 
contradicts Lemma \ref{lem:gamma-length}.

Notice that each extended chain contains an edge with weight $hG_1=q$.
If $\ell\ge g$,
then we have
$
gq<\ell(g+q)< \wgt(\gamma)= \ep_1q-1\le gq-1$, and hence, $\ell\le g-1$.
To construct such $\ep$, it suffices to solve the equation
\begin{gather} 
\ep_1 q - 1 = (g+q)\ell + y + u,\notag\\
\text{ i.e.,  }
(\ep_1-\ell) q = g\ell + y + u+1 \label{eq:ep-delta-equation}
\end{gather}
where $1\le \ep_1\le g$, $0\le \ell \le g-1$, $0< y\le g$,  $s+t=u\in\set{0,1}$.

\begin{prty}\label{prty:ell}  \rm The following are also satisfied:
\begin{enumerate} 
\item 
 $\lub_{\cEo}^{-\ell(g+1)-s}(\delta^\circ)[2,3]=(g,b)$; \label{cond:s}
 \item $\ep^\circ=\lub_{\cEo}^{-\ell(g+1)-s-y-t}(\delta^\circ) $;
\item
if $t=1$, then  $y=g$ and 
\begin{gather} 
\lub_{\cEo}^{-\ell(g+1) }(\delta^\circ)[2,3]=(g,0). \label{eq:s=0}
\end{gather}
\end{enumerate}
\end{prty} 

These relations provide necessary and sufficient conditions for (\ref{eq:delta+1}).
Suppose that we have values of the following parameters satisfying 
(\ref{eq:ep-delta-equation}) and Property \ref{prty:ell}:
$$ \ell,\ s,\ t,\ y,\ \ep_1.$$
 As we backtrack  along extended chains from $\delta^\circ[2,3]$ by $\ell$ as indicated in 
Property \ref{prty:ell} (1), there is at most one 
extended long chain involved. Thus, according to the diagram (\ref{diag:ext-chains}),
 the value of $s$ must be equal to 
the number of extended long chains traversed during backtracking to   $(g,b)$.
Thus, 
$s=s_\ell$ is determined by the value of $\ell$ and $\delta$. 
To be able to backtrack  
$(\ell(g+1)+s)$ times, $\delta^\circ$ must be sufficiently large, and 
$[0,0:\vEc w ]\ge G_4$ is sufficient for this purpose.
Since $\ep^\circ[2,3]$ lands in a long or short chain with $y>0$, the resulting tuple satisfies
$(\ep_1,\ep^\circ_2,\ep^\circ_3,\dots)\in\cE$ for any 
$1\le\ep_1\le g$.

Consider the case $q=g^2$ or $q=g^2+1$. 
If $q=g^2 $, then $\ell=g-1$, $\ep_1 = g$, and
$s\in \set{0,1}$ is determined by Condition (\ref{cond:s}) above.
Let $t=0$, and 
$ y:= g-u-1=g-s-1\le g-1$. Since 
 $g\ge 3$, we have $y>0$. 
The equation  (\ref{eq:ep-delta-equation}) and Property \ref{prty:ell} are satisfied.
  If $q=g^2+1 $, then $\ell=g-1$, $\ep_1 = g$,  and
$s\in \set{0,1}$ is determined by Condition (\ref{cond:s}) above.
Setting $t=0$, and $0<y:= g-u\le g$ yields a valid solution.

 Assume that $g+2 \le q \le g^2-1$, and $q=g\ell' + r'$ where $1\le \ell'\le g-1$ and $0\le r' \le g-1$.
 Consider the case $0\le r'\le 1$.  Let $0\le \ell:=\ell'-1\le g-2$, $0< \ep_1:=\ell+1\le g-1$, and $s $ is determined by 
 Condition
 (\ref{cond:s}).  
Setting $t=0$, and let   $0<y:=g+r'-u-1\le g$ yields a valid solution.
 
 Let  $r'= 2$, i.e., 
     $q=g\ell' + 2$.
Let $s\in\set{0,1}$ be such that 
\GGG{
 \lub_{\cEo}^{-(\ell'-1)(g+1)-s}(\delta^\circ)[2,3]=(g,b),\\
 \ep^*:=\lub_{\cEo}^{-(\ell'-1)(g+1)-s-y}(\delta^\circ),\quad
 0<y\le g.
 }
 Observe that $s$ is determined by $\ell'$.
We choose $\ell$ and $\ep^\circ$ depending on $s$.
Suppose $s=1$, and let $\ell:=\ell'-1$,  $0\le \ep_1:=\ell+1\le g-1$.
 Then,  setting    $0<y:=g+1-u=g$ and  
  $\ep^\circ=\ep^*$ yields a valid solution.
  
Suppose $s=0$, and  $\ep^*[2,3]$ is in a  long chain for $y>0$.
Then, let
 $\ell:=\ell'-1$ and $\ep_1:=\ell+1\le g-1$. 
Setting $t:=1$,   $y:= g$, and $\ep^\circ=\ep^*$ yields a valid solution.
Suppose $s=0$  and  $\ep^*[2,3]$ is  in a short chain for $y>0$.
Then, let $\ell:=\ell'$ and $\ep_1:=\ell+1\le g$.
Then, the diagram (\ref{diag:ext-chains}) implies that 
$$ \lub_{\cEo}^{-\ell(g+1)-s}(\delta^\circ)[2,3]=(g,b) $$
where the choice $s=0$ remains valid because backtracking one step further than $\ell'-1$ does not encounter 
a  long chain. 
Setting $t:=0$, $y:= 1 - u = 1>0$, and 
$$\ep^\circ:=\lub_{\cEo}^{-\ell(g+1) -y}(\delta^\circ)[2,3] $$
yields a valid solution.
Let $r'\ge 3$.  Let $\ell:=\ell'$ and $\ep_1:=\ell+1\le g$.  Let $s$ be determined by Condition
(\ref{cond:s}).  Then, setting $t:=0$ and $0<y:= r'-1-u \le g-2$ yields a valid solution. 
 \end{proof}

\begin{theorem}
Suppose that $q\le g^2+1$, and $g\ge 3$.
If $(G_1,G_2)=(q,1)$, then $R(X)=X + O(1)$.
If $q>g^2+1$, then $G$ has \uniexp.
\end{theorem}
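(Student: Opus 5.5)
The plan has two parts, one for $q\le g^2+1$ and one for $q>g^2+1$.

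\emph{First part: $2\le q\le g^2+1$.} I would follow the strategy announced around \eqref{eq:1+delta} and show that $R_G$ contains every integer beyond some fixed threshold $N_0$. The goal is: for every $\delta\in\cE$ with $\delta\dG$ sufficiently large, there is $\ep\in\cE$ with $1+\delta\dG=\ep\dG$.

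The decomposition structure of the first entry of $\delta$ splits into the following cases.
\begin{itemize}
\item If $\delta=(a:\vEc w)$ with $\vEc w_1\ne g-1$, or $\delta=(a:g-1:\vEc w)$, or $\delta\in\cEm$ with first block $(g)$, the identities \eqref{eq:case-1} (via Lemma \ref{lem:add-Gn}) produce $\ep$ directly. This uses $G_2=1$.
\item The one remaining structure is $\delta=(0,g:\vEc w)$.
\begin{itemize}
\item For $q\le g+1$ the explicit identity $1+[0,g:\vEc w]=[1:g+1-q:\vEc w]$ handles it.
\item For $g+2\le q\le g^2+1$, Proposition \ref{prop:[0,g]} handles it as soon as $[0,0:\vEc w]\ge G_4$.
\end{itemize}
\end{itemize}

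The set of $\delta=(0,g:\vEc w)$ with $[0,0:\vEc w]<G_4$ is finite, because $\vEc w$ then has bounded order. So only finitely many $n$ could fail to propagate. Let $N_0$ be the largest value $\delta\dG$ over these exceptional $\delta$, together with the values below $G_4$.

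The induction then runs as follows. Every $n\le X$ with $n>N_0$ lies in $R_G$, since it is reached from a smaller element of $R_G$ by adding $1$. For this I need a starting element of $R_G$ just above $N_0$ and a gap-free chain from it. The relation \eqref{eq:1+delta} gives exactly that: if $n\in R_G$ with $n>N_0$, then $n+1\in R_G$. To get started I note that $G_k\in R_G$ for every $k$, and I choose $k$ with $G_k>N_0$. Moving upward from $G_k$ gives every integer $\ge G_k$. Hence $\#R_G(X)=X+O(1)$, with the error at most $G_k$.

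\emph{Second part: $q>g^2+1$ gives the unique expansion property.} Here $h=1$, so $g\ge h$, and I would apply Corollary \ref{cor:G1}(3). This requires $G_1>g(\ell_0+1)+h$.

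With $G_2=1$ and $G_1=q$,
\[
\ell_0=\frac{g(1+q)}{g+q}<g,
\]
and Lemma \ref{lem:ell-bounds}(1) gives $\ell<g/h=g$, so $\ell\le g-1$. This crude bound would yield only $m\le g^2+1$. More precisely, Lemma \ref{lem:ell-bounds} requires
\[
G_1\le m\le \min\{g(\ell+1)+1,\;(g-\ell)G_1\}.
\]
For $\ell\le g-1$ this gives $G_1\le g\cdot g+1=g^2+1$. So if $q>g^2+1$, no admissible $m$ exists. Therefore there are no pairs $\ep<\delta$ with $\ep\dG=\delta\dG$, and $G$ has the unique expansion property.

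In that case Theorem \ref{thm:main-1} gives $p=r$. The inequality $r<1$ follows from the computation already displayed after Theorem \ref{thm:g-golden}.

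\emph{Expected obstacle.} The main difficulty is the bookkeeping in the first part: making sure the finitely many exceptional $\delta$ excluded by the hypothesis $[0,0:\vEc w]\ge G_4$ in Proposition \ref{prop:[0,g]} really contribute only an $O(1)$ defect, and that no other first-block structure has been missed in the case split. I would check that the cases in \eqref{eq:case-1} exhaust $\cE$ by listing the first blocks, $(a)$, $(a,g)$ and $(g)$, against the value of $\vEc w_1$.
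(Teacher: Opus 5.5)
Your proposal is correct and follows the paper's proof. In the first part you combine \eqref{eq:case-1}, the identity $1+[0,g:\vEc w]=[1:g+1-q:\vEc w]$ for $q\le g+1$, and Proposition~\ref{prop:[0,g]}, then run the $+1$ induction from a threshold beyond which the excluded $\delta$ cannot occur; the paper's condition $\ord(\delta)\ge 4$ plays the same role as your finite exceptional set. In the second part you use exactly the paper's bound $q=G_1\le m\le g(\ell+1)+1\le g^2+1$ with $\ell\le g-1$ from Lemma~\ref{lem:ell-bounds}, which the paper re-derives by counting chains.

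One small remark: your opening appeal to Corollary~\ref{cor:G1}(3) would not work on its own, since for $q=g^2+2$ one has $g(\ell_0+1)+1>q$. It is the integrality step $\ell\le g-1$ that actually carries the argument, and that is what you end up using.
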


\begin{proof}
Suppose that $q\le g^2+1$.
Notice that the subset $$A=\set{\ep\dG : \ep\in\cE,\ \ord(\ep)\le 3}$$ is finite, and let $G_N$ be the smallest term
that is greater than the values of $A$.
If $m\ge G_N$ and $m=\ep\dG$ is an $\cE$-expansion by $G$, then $\ord(\ep)\ge 4$.
By \eqref{eq:case-1} and Proposition \ref{prop:[0,g]}, induction on $m$ can be established with the initial value $G_N$, and 
  hence, all integers $m\ge G_N$ are $\cE$-expansions by $G$.
  This proves that $\#R(X) = X + O(1)$.
  
  Suppose that  $G$ does not satisfy \uniexp.
  We shall prove that $q\le g^2+1$.
  Let $\ep<\delta$ in $\cE$, and $\ep\dG = \delta\dG$.
  Let $\gamma$ be the path considered in Theorem \ref{thm:initial-values}.
  If $\gamma$ contains two edges $(0,g)\to (0,0)$, then it must have contained a long circuit with $g $ edges of weight $G_1$, 
  but according to Lemma \ref{lem:gamma-length}, this is impossible.
 Thus, $\gamma$ must contain at most one edge $(0,g)\to (0,0)$, at most $g-1$ edges with weight $G_1$ (Lemma \ref{lem:ell-bounds} Part (1)), i.e., $\ell\le g-1$ 
 where $\ell$ is the number described in Theorem \ref{thm:initial-values}, and hence contain at most $g-2$ chains.
 
  Recall that the short and long chains are defined in \eqref{eq:chain-1} and \eqref{eq:chain-2}, and have weight sums $g$ and $g+1$, respectively.
 We represent the chains, interspersed with edges of weight $G_1$ (corresponding to transitions of the form $(g,x) \to (0,y)$), as follows: 
 \GGG{
 w_1\to G_1 \to g \to G_1\to   \cdots \to g \to G_1 \to w_2
 }
 where at most one chain can have weight sum $g+1$ (corresponding to a long chain).
 There are $\ell-1$ chains surrounded by $G_1$, and let $t_k$ for $1\le k\le \ell-1$ be $1$ if the $k$th chain is a long one, 
 and $t_k=0$, otherwise.
 Notice that if $w_1$ or $w_2$ is coming from the long chain, and $w_k=g+1$, then 
 $t_k=0$ for all $k$; otherwise, $\gamma$ will contain a long circuit.
 Then
 \GGG{
 \wgt(\gamma)  = w_1 + w_2 + \sum_{k=1}^{\ell-1} (g +t_k)
  \le 2g+ (\ell-1) g  +1\\
  \le 2g+1+ (g-2) g  = g^2+1.
 }
By Theorem \ref{thm:initial-values}, this implies 
  $q=G_1\le m \le \wgt(\gamma)=m G_2 + \ell G_1 \le  g^2+1$.
 Therefore, we conclude that if $q > g^2+1$, then $G$ has \uniexp. 
\end{proof}

\subsection{Case II}
In this section, we  consider the case $(G_1,G_2)=(1,q)$ where $2\le q\le g$.
Since $ G_2=qG_1$, we have $$[q,0]=[0,1].$$
By Lemma \ref{lem:add-Gn}, it follows that 
\begin{equation} \label{eq:case-2}
\begin{aligned}
\vEc w\in\cEp \implies
1+[ \vEc w] &= [\lub_{\cE}(\vEc w)]\in R_G,\\
b<g-1 \implies
1+[g:b:\vEc w] &=  [g+1-q:b+1:\vEc w]\in R_G,\\
1+[g:g-1:\vEc w] &=  [g -q : \vEc w^1]\in R_G,\\ 
1+[g:0,g:\vEc w] &= [g+1-q:0:\vEc w^1]\in R_G . 
\end{aligned} 
\end{equation}

\begin{theorem} 
If $(G_1,G_2)=(1,q)$ where $2\le q\le g$,  then $R(X)=X $. 
If $q>g$, then $G$ has \uniexp.

\end{theorem}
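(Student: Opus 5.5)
The plan has two parts: surjectivity onto all of $\nat$ when $2\le q\le g$, and unique expansion when $q>g$.

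\textbf{Surjectivity.} Since $G_1=1$, we have $1=[1]\in R_G$, and $R(X)=X$ follows from showing $1+n\in R_G$ for every $n=\delta\dG$ with $\delta\in\cE$. The induction is on $n$, and I split by the semi-proper block structure of $\delta$.

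If $\delta\in\cEp$, Lemma \ref{lem:add-Gn} with $n=1$ gives $1+\delta\dG=G_1+\delta\dG=\lub_\cE(\delta)\dG$. This is the first line of \eqref{eq:case-2}.

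If $\delta\in\cEm$, write $\delta=(g:\vEc v)$ with $\vEc v\in\cEp$ and split on the first block of $\vEc v$:
\begin{enumerate}
\item $(b:\star)$ with $b<g-1$;
\item $(g-1:\star)$;
\item $(c,g:\star)$ with $0\le c\le h-1$. Since $h=1$, this forces $c=0$.
\end{enumerate}
These are exactly the remaining three lines of \eqref{eq:case-2}. In each one I use the relation $[q,0]=[0,1]$, i.e.\ $qG_1=G_2$, to lower the first coefficient from $g+1$ to $g+1-q$ while adding one unit at index $2$.

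The admissibility checks are as follows.
\begin{enumerate}
\item Since $q\ge2$, the first entry $g+1-q\le g-1$ is a semi-proper block. The carried entry $b+1\le g-1$ is still a block, so the result lies in $\cEp$.
\item $g-1+1=g$ becomes the block-starting carry. By Lemma \ref{lem:add-Gn}, $G_2+[0:g-1:\vEc w]=[0,\lub_\cE(g-1:\vEc w)]$, which equals $[0:\vEc w^1]$ after absorbing the carry through the recurrence. Hence $1+[g:g-1:\vEc w]=[g-q:\vEc w^1]$, and $g-q\le g-2$ gives membership in $\cEp$.
\item $(0,g)$ plus $G_2$ becomes $(1,g)$, which is not a block when $h=1$. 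Using $G_2+gG_3=G_4$ (the recurrence with $h=1$), it turns into $(0,0)$ with a carry into $\vEc w$. This gives $[g+1-q:0:\vEc w^1]$, using Lemma \ref{lem:add-Gn} at index $4$.
\end{enumerate}
Because $q\le g$, every first entry $g+1-q$ or $g-q$ is nonnegative, so the tuples are genuine. The cases are exhaustive, so every positive integer lies in $R_G$ and $\#R_G(X)=X$ exactly, with no error term.

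\textbf{Unique expansion.} For $q>g$ we have $G_2=q>g=gG_1$, so Corollary \ref{cor:initial-unique} (equivalently Theorem \ref{thm:initial-unique}) applies directly.

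\textbf{Main obstacle.} The step needing care is the carry bookkeeping in the last two cases. I must confirm that Lemma \ref{lem:add-Gn} applied to the shifted tail yields a tuple whose concatenation with the new leading entry is still in $\cE$. In particular, a carry that produces a leading $(0,g)$ block must still be compatible with the first entry. I would verify this directly from the block list $(0),\dots,(g-1),(0,g)$ for $h=1$, mirroring the bookkeeping in \eqref{eq:case-1} from Case I.
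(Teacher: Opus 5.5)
Your proposal is correct and matches the paper's proof. Surjectivity is exactly the four-case identity \eqref{eq:case-2} plus induction from $0$. For $q>g$ you cite Corollary~\ref{cor:initial-unique} ($G_2>gG_1$) where the paper cites Corollary~\ref{cor:G2-unique} ($G_2>\max\{g,h-1\}$); these are the same condition here because $G_1=h=1$.

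One small wording fix in your case (2). The drop from $g+1-q$ to $g-q$ comes from $G_1+gG_2=G_3$: one unit is taken from index $1$ to clear the inadmissible pair $(g+1-q,g)$. It does not come from Lemma~\ref{lem:add-Gn} at index $2$, which only yields the already admissible $[0,g,\mathbf w]$. Your final identity $1+[g:g-1:\mathbf w]=[g-q:\mathbf w^1]$ is nonetheless right.
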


 \begin{proof}
By (\ref{eq:case-2}), induction   with the initial value $0=[0]$ proves the statement that
all positive integers are $\cE$-expansions by $G$.
Suppose that $q>g$.
Then, $G_2 = q >g=\max\set{g,0}$.
By Corollary \ref{cor:G2-unique}, $G$ has \uniexp. 
\end{proof}

\section{The ratio functions} \label{sec:ratio}
By Theorem \ref{thm:main-1}, the following limit function is well-defined:
\begin{deF}
\rm
Let $\Nprm$ be the set of $(x,y)\in \nat^2$ with $\gcd(x,y)=1$.
Given $(x,y)\in \Nprm$, let
 $\pp(x,y):=\lim_{X\to\infty} \frac{\#R_G(X)}X$ 
where $G$ is a sequence in $\cH$ satisfying the initial values $(G_1,G_2)=(x,y)$.
We refer to $\pp$ as {\it the ratio function for $\cH$}.
\end{deF}

Let $y_0:=\max\set{g,h-1}$, and $x_0:=g+h$ if $g\le h-1$ and $x_0:=(g^2+gh+h^2)/h$ if $g\ge h$.
By Theorem \ref{thm:main-1}, if $y>y_0 $ or $x> x_0$, 
then $G$ has \uniexp, which yields $\pp(x,y) = ((g+1)+h\ome)/( y + hx\ome)$.
It is of interest to investigate
the behavior of the values of $\pp(x,y)$ on 
$$\Nprm(x_0,y_0):=[1,x_0]\times [1,y_0] \cap \Nprm.$$

As an illustration, consider  the $g$-golden ratio recurrence, where $x_0 = g^2+g+1$ and $y_0= g$.
In this case,  
  we may investigate the asymptotic formulas  of 
  \GGG{
  \frac1{\#A(g)}\sum_{(x,y)\in A(g)} \pp(x,y)\text{\quad and \quad}
  \#\set{(x,y)\in A(g) : \pp(x,y)=1}
  }
  where $A(g):=
\Nprm(g^2+g+1, g) $. 
Specifically, one might investigate whether there exist positive integers  $x_1$ and $y_1$ such that 
  $\pp(x,y)=1$ for all $(x,y) \in \Nprm(x_1,y_1)$.
  Furthermore, it is natural to ask whether these parameters $x_1$ and $y_1$ are $\Omega(g)$; that is, whether the ratios $x_1/g$ and $y_1/g$ have positive limits inferior as $g\to \infty$ (i.e., $\liminf_{g\to\infty} x_1/g > 0$ and $\liminf_{g\to\infty} y_1/g > 0$).

\newpage
\section*{Appendix}\label{sec:appendix}

\subsection*{Table 1}\label{sec:table-1}
Let $(g,h)=(2,1)$.
The list of all pairs $\ep'<\delta'$ with $\ep'\dG = \delta'\dG$ is found in the table.
\scriptsize
\begin{equation*}
\begin{array}{| c  | c | c | l | l | l | l |}
\hline
\text{Equality} & w & d & \hfill\ep'\hfill & \hfill\delta' \hfill & \hfill\text{Ratio}& \hfill\text{Total} \hfill \\
 \hline
\VS{1  em}
 \mrow{1}{2G_1= G_2}
  	& \mrow{1}{G_2} & \mrow{1}{2G_1} &  (2:  \star) & (0:  \star) & vr \ome 
  		& \mrow{1}{vr\ome} \\    
\hline
\VS{1.3em}
\mrow4{ G_1=2G_2} & \mrow1{2G_2} & \mrow1{G_1} &   (1:0: \star) & (0,2: \star) & vr \ome^2
  & \mrow5{vr(\ome+\ome^2+\ome^3)} \\
   & 2G_2+G_1  & 2G_1  &   (2:0: \star) & (0: \star) &  vr \ome^2& \\  
       &    &    &  (2:1: \star) & (0: \star) &  vr \ome^2 &  \\
                 & 2G_2   &   G_1 &  (1:0,2:\star) & (0:1:\star) & vr \ome^3& \\
                                & 2G_2   &   G_1 &  (2:0,2:\star) & (1:1:\star) & vr \ome^3& \\
 \hline
\VS{1 em}
  \mrow2{G_1=3G_2} & \mrow1{3G_2+G_1} & \mrow1{2G_1} &   (2: \star) & (0: \star) & vr \ome 
  & \mrow2{vr(\ome+ \ome^3)} \\  
      &  3G_2  &  G_1  &   (1:0,2:\star) & (0,2:\star) &  vr \ome^3& \\ 
      \hline
\VS{1.3em}
  \mrow1{2G_1=3G_2} & 3G_2 & 2 G_1 &  (2:0,2:\star) & (0,2:\star) & vr \ome^3  & vr \ome^3 \\ 
 \hline
\VS{1.3em}
 \mrow{3}{G_1=4G_2}
  	& \mrow{2}{4G_2+G_1} & \mrow{2}{2G_1} &  (2:0: \star) & ( \star) & vr \ome^2
  	 &  \mrow3{vr( \ome^2+2\ome^3)}  \\   
          &   &  &  (2:1:1:\star)  & (0,2:\star) & vr \ome^3 &   \\
	&   &  &  (2:0,2: \star)  & (0:1: \star) &vr \ome^3 &    \\ 
  \hline
\VS{1.3em}
 G_1=5G_2 
  	& 5G_2+G_1  & 2G_1  &  (2:0:1:\star) & (0,2:\star) & vr \ome^3  &
  \mrow2{vr( 2\ome^3)} 	\\   
  	&   &   &  (2:0,2:\star) & (0,2:\star) & vr \ome^3 &  \\   
 \hline 
\end{array} 
\end{equation*}

\end{document}

%% file: basic_layout_local.tex
\newcommand\HS[1]{\mbox{\rule{#1}{0pt}}}
\newcommand\VS[1]{\mbox{\rule{0pt}{#1}}}

%% file: MathMacro.tex
\renewcommand{\implies}{\Rightarrow}

\newcommand{\zz}{\mathbb{Z}}
\newcommand{\nat}{\mathbb{N}}

\newcommand{\dis}{\displaystyle}
\newcommand{\ncom}{\newcommand}

\newcommand{\To}{\longrightarrow}

\def\abs#1{\left\vert #1 \right\vert}

\def\set#1{\lbrace #1 \rbrace}

\newcommand{\Inv}{{}^{-1}}

\newcommand{\al}{\alpha}
\newcommand{\ome}{\omega}
\newcommand{\ep}{\epsilon}
 \ncom{\Del}{\Delta}

\newcommand{\GGG}[1]{
\begin{gather*}
#1
\end{gather*}
}
\newcommand{\AAA}[1]{
\begin{align*}
#1
\end{align*}
}

%% file: algebra.tex
\newcommand{\ord}{\operatorname{ord}} 

%% file: Zec_local-2nd.tex
\newcommand{\zec}{Zeckendorf}

\newcommand{\fib}{Fibonacci}
\newcommand{\funds}{fundamental sequence}

\newcommand{\HSW}[1]{\rule{#1\linewidth}{0pt}}

 \newcommand{\seq}[1]{\set{#1_k}_{k=1}^\infty}

\newcommand{\cE}{\mathcal{E}}
\newcommand{\cEo}{\cE^\circ}
\newcommand{\cEbar}{\overline{\cE}}
\newcommand{\cEstar}{\cE^\star}
\newcommand{\cEp}{\cE_{\mathrm p}}
\newcommand{\cEm}{\cE_{\mathrm m}}

\newcommand{\cH}{\mathcal{H}}

\newcommand{\lex}{lexicographical}

\newcommand{\cf}{coefficient function}

\newcommand{\wt}{\widetilde}
\newcommand{\len}{\mathrm{len}}

\newcommand{\nN}{n \in \nat}
\newcommand{\kN}{k \in \nat}

\newcommand{\baR}{\overline}

\newcommand{\uniexp}{the unique expansion  property under $\cE$}

\newcommand{\opi}{of positive integers} 
\newcommand{\roe}{rule of expansions}

\newcommand{\eval}{\mathrm{eval}}

\newcommand{\lub}{\mathrm{lub}}
\newcommand{\cT}{\mathcal{T}}
\newcommand{\fB}{\mathfrak{B}}
\newcommand{\fT}{\mathfrak{T}}
\newcommand{\Path}{\mathrm{Path}}

\newcommand{\wgt}{\mathrm{wt}}

\newcommand{\thepaths}{ \Path(\Gamma_G) }
\newcommand{\mrow}[2]{\multirow{#1}{*}{$#2$}}

\newcommand{\appone}{Appendix: Table 1}

\newcommand{\expanD}{expanded}

\newcommand{\dG}{\cdot G}
\newcommand{\dH}{\cdot H}

\newcommand{\Tp}{T_{\mathrm p}}
\newcommand{\Tm}{T_{\mathrm m}}
\newcommand{\Sp}{S_{\mathrm p}}
\newcommand{\Sm}{S_{\mathrm m}}
\newcommand{\spb}{semi-proper block}
\newcommand{\NN}{\mathfrak{N}}

\newcommand{\NNB}{\NN^\infty(B)}
\newcommand{\vEc}{\mathbf}
\newcommand{\PathG}{\Path(\Gamma_G)}

\newcommand{\pp}{\mathfrak{p}}
\newcommand{\Nprm}{\NN_2^{\mathrm{\rm prm}}}
\newcommand{\theFigures}{Figures
 \ref{fig:g>=h},
\ref{fig:g=h-1}, and \ref{fig:g<h-1}}

\newcommand{\ecl}[1]{\llbracket #1\rrbracket}

\newcommand{\ShowProof}[1]{
  \ifthenelse{\equal{\ProofInd}{ON}}{
  #1
  }{}}